\documentclass[12pt]{article}
\usepackage{amscd,amsthm,amsfonts,amssymb,amsmath,euscript}
\usepackage{graphicx}
\input epsf
\def\hybrid{\topmargin 0pt      \oddsidemargin 0pt
        \headheight 0pt \headsep 0pt
        \voffset=-0.5cm
        \textwidth 6.25in       
        \textheight 9.5in       
        \marginparwidth 0.0in
        \parskip 5pt plus 1pt   \jot = 1.5ex}
\catcode`\@=11
\def\marginnote#1{}

\newcount\hour
\newcount\minute
\newtoks\amorpm
\hour=\time\divide\hour by60
\minute=\time{\multiply\hour by60 \global\advance\minute by-\hour}
\edef\standardtime{{\ifnum\hour<12 \global\amorpm={am}%
        \else\global\amorpm={pm}\advance\hour by-12 \fi
        \ifnum\hour=0 \hour=12 \fi
        \number\hour:\ifnum\minute<10 0\fi\number\minute\the\amorpm}}
\edef\militarytime{\number\hour:\ifnum\minute<10 0\fi\number\minute}

\def\draftlabel#1{{\@bsphack\if@filesw {\let\thepage\relax
   \xdef\@gtempa{\write\@auxout{\string
      \newlabel{#1}{{\@currentlabel}{\thepage}}}}}\@gtempa
   \if@nobreak \ifvmode\nobreak\fi\fi\fi\@esphack}
        \gdef\@eqnlabel{#1}}
\def\@eqnlabel{}
\def\@vacuum{}
\def\draftmarginnote#1{\marginpar{\raggedright\scriptsize\tt#1}}
\def\draftlabel#1{{\@bsphack\if@filesw {\let\thepage\relax
   \xdef\@gtempa{\write\@auxout{\string
      \newlabel{#1}{{\@currentlabel}{\thepage}}}}}\@gtempa
   \if@nobreak \ifvmode\nobreak\fi\fi\fi\@esphack}
        \gdef\@eqnlabel{#1}}
\def\@eqnlabel{}
\def\@vacuum{}
\def\draftmarginnote#1{\marginpar{\raggedright\scriptsize\tt#1}}

\def\draft{\oddsidemargin -.5truein
        \def\@oddfoot{\sl preliminary draft \hfil
        \rm\thepage\hfil\sl\today\quad\militarytime}
        \let\@evenfoot\@oddfoot \overfullrule 3pt
        \let\label=\draftlabel
        \let\marginnote=\draftmarginnote
   \def\@eqnnum{(\theequation)\rlap{\kern\marginparsep\tt\@eqnlabel}%
\global\let\@eqnlabel\@vacuum}  }


\def\numberbysection{\@addtoreset{equation}{section}
        \def\theequation{\thesection.\arabic{equation}}}

\def\underline#1{\relax\ifmmode\@@underline#1\else
        $\@@underline{\hbox{#1}}$\relax\fi}

\def\titlepage{\@restonecolfalse\if@twocolumn\@restonecoltrue\onecolumn
     \else \newpage \fi \thispagestyle{empty}\c@page\z@
        \def\thefootnote{\fnsymbol{footnote}} }

\def\endtitlepage{\if@restonecol\twocolumn \else  \fi
        \def\thefootnote{\arabic{footnote}}
        \setcounter{footnote}{0}}  
\relax


\numberbysection
\hybrid


\newcommand{\DDD}{\raise-1pt\hbox{$\mbox{\Bbbb D}$}}



\newcommand{\UUU}{\raise-1pt\hbox{$\mbox{\Bbbb U}$}}

\newcommand{\z}{\raise-1pt\hbox{$\mbox{\Bbbb Z}$}}

\def\beq{\begin{equation}}
\def\eeq{\end{equation}}
\def\p{\partial}

\newtheorem{theorem}{Theorem}[section]
\newtheorem{lemma}{Lemma}[section]
\newtheorem{lemma-definition}{Lemma-Definition}[section]
\newtheorem{corollary}{Corollary}[section]

\newtheorem{proposition}{Proposition}[section]

\newcommand{\Cov}{\mathop{\rm Cov}\nolimits}
\newcommand{\Aut}{\mathop{\rm Aut}\nolimits}

\begin{document}
\begin{titlepage}

\title{Symmetric solutions to dispersionless 2D Toda hierarchy, Hurwitz numbers and conformal dynamics}

\author{S.M.~Natanzon\thanks{National Research University Higher School of Economics, 20 Myasnitskaya Ulitsa, Moscow 101000, Russia and A.N.Belozersky Institute, Moscow State University, Moscow, Russia and ITEP, Moscow, Russia,
e-mail: natanzons@mail.ru}
\and A.V.~Zabrodin
\thanks{Institute of Biochemical Physics,
4 Kosygina st., Moscow 119334, Russia; ITEP, 25
B.Cheremushkinskaya, Moscow 117218, Russia and
National Research University Higher School of Economics,
20 Myasnitskaya Ulitsa,
Moscow 101000, Russia, e-mail: zabrodin@itep.ru}}

\date{February 2013}
\maketitle

\begin{abstract}

We explicitly construct the series expansion for a certain class of
solutions to the 2D Toda hierarchy in the zero dispersion limit,
which we call symmetric solutions. We express
the Taylor coefficients through some universal combinatorial
constants and find recurrence relations for them.
These results are used to obtain new formulas for the
genus 0 double Hurwitz numbers. They can also serve
as a starting point for a constructive approach
to the Riemann mapping problem and the inverse potential problem in 2D.

\end{abstract}

\vfill

\end{titlepage}

\tableofcontents

\section{Introduction}

\vspace{1ex}

The dispersionless 2D Toda lattice (2DTL) hierarchy  was introduced
by Takasaki and Takebe \cite{TakTak1,TakTak}. It can be represented in two
equivalent ways: in the Lax-Sato form or in the Hirota form.
In this paper we use the latter formulation.
The dispersionless 2DTL hierarchy
is an infinite system of differential equations
\beq\label{eq1}(z-\xi)\exp\bigl (D(z)D(\xi)F\bigr )=z \exp \bigl (
-\partial_0 D(z)F\bigr )-\xi
\exp\bigl (-\partial_0D(\xi)F\bigr ),
\eeq
\beq\label{eq2}(\bar z-\bar\xi)\exp\bigl ( \bar D(\bar z)
\bar D(\bar\xi)F\bigr )=
\bar z \exp\bigl (-\partial_0\bar D(\bar z)F\bigr )-\bar\xi
\exp\bigl (-\partial_0\bar D(\bar\xi)F\bigr ),
\eeq
\beq\label{eq3}
1- \exp\bigl (-D(z)\bar D(\bar\xi)F\bigr )=\frac{1}{z\bar\xi}
\exp\bigl (\partial_0
(\partial_0+D(z)+\bar D(\bar\xi))F\bigr )
\eeq
for the function $F=F(t_0, {\bf t} , {\bf \bar t})$, where
${\bf t}=\{t_1, t_2, \ldots \}$, ${\bf \bar t}
=\{\bar t_1, \bar t_2, \ldots \}$
are two infinite sets of time variables and
\beq\label{DD}
D(z)=\sum_{k\geq 1}\frac{z^{-k}}{k}\, \p_{k}\,,
\quad \quad
\bar D(\bar z)=\sum_{k\geq 1}\frac{\bar z^{-k}}{k}\, \bar \p_{k}\,.
\eeq
Hereafter we abbreviate $\p_k =\p /\p t_k$, $\bar \p_k =\p/\p \bar t_k$.
Differential equations of the hierarchy are obtained by expanding
equations (\ref{eq1})-(\ref{eq3}) in powers of $z, \xi$.
These equations are known to be connected with different
branches of mathematics and mathematical physics.

Solutions to equations (\ref{eq1})-(\ref{eq3}) such that
$\partial_kF|_{t_0}=\bar{\partial}_kF|_{t_0}=0$ for all $k\geq 1$
form an especially important class.
(By $g|_{t_0}(t_0)$ we denote the restriction
of a function $g(t_0,t_1, \bar t_1, t_2,\bar t_2, \dots)$ to
the line $t_1=\bar t_1 =t_2=\bar t_2 =\ldots =0$.)
We call them \textit{symmetric solutions}.
This class contains, in particular, the $c=1$ string solution
\cite{DMP,EK,HOP,Takstring}.

\vspace{1ex}

In Section 2 we describe all formal symmetric solutions of
the dispersionless 2DTL hierarchy in the form of a Taylor series.
We prove that they are fully defined by
the restriction $F|_{t_0}$ (a function of one variable).
The Taylor coefficients are expressed
through some universal combinatorial constants $N_{(\Delta|\bar{\Delta})}\left(\begin{matrix} s_1\ldots s_m \\ r_1\ldots r_m \end{matrix}\right)$ which depend on two
Young diagrams $\Delta,\bar{\Delta}$ and two sequences
of natural numbers $\{s_i\}, \{r_i\}$.
Moreover, we find recurrence formulas for $N_{(\Delta|\bar{\Delta})} \left(\begin{matrix} s_1\ldots s_m \\ r_1\ldots r_m \end{matrix}\right)$. Our method is an
extension of the method developed in \cite{N,N03} for the formal
$c=1$ string solution. Convergence of the Taylor series for this case was investigated in \cite{KKN}.
\vspace{1ex}

In Section 3 we apply these results to the double Hurwitz numbers for coverings of genus 0. This application is based on the fact that the generating function for connected double Hurwitz numbers of genus 0 is a symmetric solution of the dispersionless 2DTL hierarchy, which follows from Okounkov's result \cite{Okounkov00} and its dispersionless limit discussed by Takasaki \cite{Takasaki12}.

\vspace{1ex}

The connected  genus 0 double Hurwitz number $H_0(\Delta|\bar{\Delta})$ is the number of non-equivalent
rational functions with pre-images of 0 and $\infty$
of fixed topological types given by Young diagrams
$\Delta =[k_1,\ldots,k_\ell ]$ and $\bar{\Delta}=[\bar k_1,\ldots, \bar k_{\bar \ell}]$ (with the condition on degrees $|\Delta |=|\bar \Delta |$)
and some fixed simple critical values (finite and non-zero). For rational functions of degree $d$ with simple finite critical values this number was found by Hurwitz \cite{Hur}:
$$
H_0([2,\underbrace{1,\ldots , 1}_{d-2}]|[k_1,\dots,k_n])=
H_0([\underbrace{1, \ldots ,1}_{d}]|[k_1,\dots,k_n])=
\frac{(d+n-2)!} {\sigma([k_1,\dots,k_n ])}
\prod\limits_{i=1}^n\frac{k_i^{k_i}}{k_i!}d^{n-3}
$$
where $\sum_{i}k_i =d$ and $\sigma(\Delta)$
is order of the automorphism group of rows for the Young diagram $\Delta$.

Further results are based on the moduli space methods \cite{Lando}.
In \cite{Lando-Zvonkine [43 Lando]} it was found, in particular, that
$$H_0([n]|\Delta )=
\frac{(\ell -1)!}{\sigma(\Delta )}\, n^{\ell -2}.
$$
Some formulas for $H_0([n_1,n_2]|\Delta )$ and
$H_0([n_1,n_2,n_3]| \Delta )$ were found in \cite{GJV,SSV}.
It was also proved that the
Hurwitz numbers $H_0([[k_1,\dots,k_n]]|[\bar{k}_1,\dots, \bar{k}_{\bar{n}}])$ depend piecewise polynomially on $k_1,\dots,k_n,\bar{k}_1,\dots, \bar{k}_{\bar{n}}$ \cite{GJV}. The domains of polynomiality (chambers)
were characterized and relations between
polynomials in neighboring chambers were found \cite{SSV}.

\vspace{1ex}

In Section 3, by comparing the generating function
for the genus 0 double Hurwitz numbers with the formulas
for symmetric solutions,
we give a representation of the Hurwitz numbers through
the combinatorial coefficients
$N_{(\Delta|\bar{\Delta})}\left(\begin{matrix} s_1\ldots s_m \\ r_1\ldots r_m \end{matrix}\right)$. These formulas appear to be new and useful.
In particular, they make
explicit some general properties of the genus 0 double
Hurwitz numbers (for example, the piecewise polynomiality).
Moreover, they are well-suited for direct calculations with the
help of the computer and allow one
to obtain closed formulas for $H_0([k_1,
\ldots ,k_n]| [\bar{k}_1,\ldots ,\bar{k}_{\bar{n}}])$
for any $n,\bar n$.
We illustrate this method by two examples: $H_0([n]|[k_1,\ldots ,k_m])$ and $H_0([k_1,k_2]| [\bar{k}_1, \bar{k}_2])$,
where the calculations
can be done by hands. Other consequences
of our general formulas will be discussed elsewhere.

\vspace{1ex}

Another application, addressed in Section 4,
is the class of problems which we call
conformal dynamics.
In papers \cite{MWWZ,WZ,KKMWZ,MWZ}
it was shown that some problems of complex
analysis in 2D, such as conformal mapping,
Dirichlet boundary value problem
and 2D inverse potential problem have a hidden integrable
structure, which, for simply-connected domains,
is the dispersionless 2DTL hierarchy.
In a more general case it is the universal Whitham hierarchy
introduced by Krichever
in \cite{KriW1,KriW}. The hierarchical times $t_k$ are suitably
defined harmonic moments of the domain and their complex
conjugates, with $t_0$ being proportional
to the area of the domain. One can construct the
dispersionless tau-function $F=F(t_0,\{t_k\},
\{\bar t_k\})$ which contains all the information
about the conformal bijection of any domain with given moments
to the unit disk. The function $F$ satisfies the dispersionless version
of the Hirota equations for the 2DTL hierarchy given by
(\ref{eq1})-(\ref{eq3}).

\vspace{1ex}

Further, in \cite{Ztmf} it was argued that any non-degenerate
solution of the hierarchy, with certain
reality conditions imposed, can be given a
similar geometric meaning. Such solutions are parameterized by
a function $\sigma (z, \bar z)$
of two variables which has the meaning of a density, or conformal metric,
in the complex plane. The moments should be now defined as integrals
of powers of $z$ with this density. The integral representation for the
dispersionless tau-function also changes accordingly but the formulas
which express the conformal map through its second order derivatives
do not depend on $\sigma$. In other words, the Toda dynamics encodes
the shape dependence of the conformal mapping, which we call the
{\it conformal dynamics}.
In the context of the
conformal dynamics, the symmetric solutions correspond to the axially
symmetric functions $\sigma$ (i.e., depending only on $|z|^2$).

\vspace{1ex}

Some important examples of conformal dynamics on symmetric background
are considered in Section 5.
Using the approach
of Section 2, one can write explicit formulas for the
corresponding symmetric solutions of the dispersionless 2DTL hierarchy.
The corresponding conformal dynamics provides an effectivization of the
Riemann mapping theorem.
Namely, for any domain characterized by its moments, our formulas
allow one to find the conformal map from this domain to the unit disk with
any given precision.

\vspace{2ex}

\section{Formal symmetric solutions of dispersionless 2D Toda hierarchy}

\vspace{1ex}

Let $\Delta=[\mu_1,\mu_2,\ldots,\mu_\ell ]$ be
the Young diagram with $\ell =\ell (\Delta )$
rows of non-zero lengths $\mu_1\geq\mu_2
\geq\ldots\geq\mu_\ell >0$,
and similarly for $\bar \Delta=[\bar \mu_1,\bar \mu_2,\ldots,\bar
\mu_{\bar \ell}]$, with $\bar \ell = \ell (\bar \Delta )$.
We identify $\Delta$ with the partition of the number
$|\Delta |:=\mu_1 +\ldots +
\mu_{\ell}$ into the $\ell$ non-zero parts $\mu_i$.
Another convenient notation is $\Delta =(1^{m_1}2^{m_2}\ldots
r^{m_r}\ldots )$, which means that exactly $m_i$ parts of the
partition $\Delta$ have length $i$: $m_i=\mbox{card}\, \{j: \mu_j=i\}$. Put $\sigma (\Delta)=m_1!\ldots m_{\ell}!$ and $\rho(\Delta)=\mu_1\ldots
\mu_{\ell (\Delta )}$.

\vspace{1ex}

Given the two sets of time variables,
${\bf t}=\{t_1, t_2, \ldots \}$, ${\bf \bar t}
=\{\bar t_1, \bar t_2, \ldots \}$ as before,
set $t_{\Delta}=t_{\mu_1}\dots t_{\mu_\ell }$, $\bar{t}_{\Delta}=
\bar{t}_{\mu_1}\dots \bar{t}_{\mu_\ell }$. For the empty diagram we put $t_{\emptyset}=\bar t_{\emptyset}=1$. We say that a Taylor series of the form
\beq\label{formal}
F(t_0,{\bf t},{\bf \bar{t}})=\sum\limits_{\Delta,\bar{\Delta}}
F (\Delta|\bar{\Delta}|t_0) \, t_{\Delta} \bar{t}_{\bar{\Delta}},
\eeq
where the sum is over all pairs of Young diagrams including empty ones, is a \textit{formal solution} to the dispersionless 2DTL hierarchy if
its substitution to equations (\ref{eq1})-(\ref{eq3}) gives the identity.
In a more explicit but less compact notation the series (\ref{formal}) reads
$$F(t_0,{\bf t},{\bf \bar{t}})=F (t_0) \ + \!\!
\sum\limits_{{\mu_1\geq\mu_2
\geq\ldots\geq\mu_\ell \atop \bar{\mu}_1\geq\bar{\mu}_2
\geq\ldots\geq\bar{\mu}_{\bar{\ell}}}}
F (\mu_1,\mu_2,\dots,\mu_\ell |\bar{\mu}_1,
\bar{\mu}_2,\dots, \bar{\mu}_{\bar{\ell}}|t_0) t_{\mu_1}t_{\mu_2}\dots t_{\mu_k}\bar{t}_{\bar{\mu}_1}\bar{t}_{\bar{\mu}_2}\dots
\bar{t}_{\mu_{\bar{\ell}}}.$$
Here $F (t_0)=F(t_0,0,0)$.

\vspace{1ex}

\noindent
{\bf Remark.} In this section the
bar does not mean the complex conjugation
and $t_k$, $\bar t_k$ are regarded
as arbitrary formal variables.

\subsection{Taylor expansion of symmetric solutions}

Let $g|_{t_0}(t_0)$ denote the restriction
of a function $g(t_0,t_1, \bar t_1, t_2,\bar t_2, \dots)$ to
the line $t_1=\bar t_1 =t_2=\bar t_2=\ldots =0$.
Formal solutions such that
$\partial_kF|_{t_0}=\bar{\partial}_kF|_{t_0}=0$ for all $k\geq 1$
will be called \textit{symmetric}.

\vspace{1ex}

Our first goal is to prove that symmetric formal solutions
are fully determined by
the function $F|_{t_0}(t_0):=F(t_0)$ which can be an arbitrary
twice differentiable function of $t_0$.
Moreover, we prove that $\Phi(\Delta|\bar{\Delta}|t_0)$ is a
differential polynomial in $f(t_0)=\exp(F|_{t_0}''(t_0))$
(i.e., a polynomial in
$f(t_0), f'(t_0), f''(t_0), \ldots $)
with universal coefficients which will be found.
For a particular function $f(t_0)$ they were found in \cite{N}.
In the general case the arguments are similar.
We split the proof into several lemmas.

\begin{lemma-definition} (\cite{N}, Lemma 3.3) \label{ld2.1}
For any formal solution $F$
the following relation holds
$$\partial_i\partial_j F= \!\!
\sum_{{m>0 \atop p_1+\dotsb+p_m=i+j}}
\frac{ij}{p_1\ldots p_m}\,
T_{ij}(p_1, \ldots , p_m) \, \partial_0
\partial_{p_1} F\ldots \partial_0\partial_{p_m} F,$$
where
$$
T_{ij}(p_1, \ldots , p_m)=\!\! \sum\limits_{{k>0, \, n_i>0\atop
n_1+\dots+ n_k = m}}
\frac{(-1)^{m+1}}{k \, n_1!\ldots n_k!} \,
P_{ij}\left (\sum\limits_{i=1}^{n_1}p_i,
\sum\limits_{i=n_1+1}^{n_1+n_2}\!\! p_i,\,\,
\ldots ,
\! \sum\limits_{i=n_{1}+\ldots +n_{k-1}+1}^{m}p_i\right )
$$
and $P_{ij}(r_1, \ldots ,r_m)$ is the number of
sequences of positive integers
$(i_1,\dots,i_m)$, $(j_1,\dots,j_m)$ such that $i_1+\ldots+i_m=i$, $j_1+\ldots+j_m=j$ and $r_k=i_k+j_k$.
\end{lemma-definition}

An analog of this lemma for
the dispersionless KP hierarchy can be found in  \cite{N01}.
Here and below all indices and variables with indices
like $i,p,s,\ell,n,r,a,b$ with or without bar are positive integers.

\vspace{1ex}

By induction one can prove

\begin{lemma-definition} (\cite{N}, Lemma 3.4.) \label{ld2.2}
For any formal solution $F$ and $k>1$
the following relation holds
$$\partial_{i_1}\partial_{i_2}\dotsb\partial_{i_k}F=
\sum\limits_{m=1}^\infty \Bigl (\sum \limits_{{s_1+ \dotsb+s_m=i_1+
\ldots +i_k \atop \ell_1+\dotsb +\ell_m=m+k-2}} \frac{i_1\ldots i_k}{s_1\ldots s_m}
T_{i_1\ldots i_k}\!
\left(\begin{matrix} s_1\ldots s_m \\ \ell_1\ldots \ell_m\end{matrix} \right)\partial^{\ell_1}_0\partial_{s_1} F\ldots \partial^{\ell_m}_0 \partial_{s_m} F\Bigr),
$$
where

$$ \quad
T_{i_1i_2}\left(\begin{matrix} s_1\ldots s_m \\ \ell_1 \ldots \ell_m\end{matrix}\right)=\left \{
\begin{matrix}T_{i_1i_2}(s_1,\ldots ,s_m),& \ \text{if}\
\ell_1=\ldots= \ell_m=1 \\ \\ 0 & \ \text{otherwise,}
\end{matrix}\right.$$

\vspace{1ex}

$$\begin{array}{lll}
T_{i_1\ldots i_k}\left(\begin{matrix} s_1\ldots s_m \\ \ell_1\ldots \ell_m\end{matrix}\right)&=&\displaystyle{
\sum\limits_{{1\leqslant i\leqslant j\leqslant m}} \frac{\ell!}{(\ell_i-1)!
\ldots (\ell_j-1)!}}
\\ &&\\
&\times&\displaystyle{T_{i_1\ldots i_{k-1}}\biggl( {\begin{matrix}
s_1\ldots s_{i-1} \\ \ell_1\ldots \ell_{i-1}\end{matrix}}\,
\begin{matrix} s \\ \ell \end{matrix} \, {\begin{matrix} s_{j+1}\ldots s_m\\ \ell_{j+1}\ldots \ell_m\end{matrix}} \biggr)
\, T_{s,i_k}(s_i, s_{i+1},\ldots ,s_j),}
\end{array}$$
with
$$\quad s=s_i+s_{i+1}+\dotsb +s_j-i_k>0, \quad \ell=(\ell_i-1)+\dotsb+(\ell_j-1)>0.
$$
\end{lemma-definition}

\vspace{1ex}

From now on we consider only symmetric formal solutions $F$.
Put \textbf{$f:=\exp(F|_{t_0}'')$}.

\begin{lemma}\label{symmetric} If $F$ is a symmetric formal solution, then
$$\partial_i\bar\partial_j F\vert_{t_0}=\biggl\{ \begin{matrix}
0 &\ \text{for}\ i\ne j\,,\\ i f^i &\ \text{for}\ i=j\,. \end{matrix}$$
\end{lemma}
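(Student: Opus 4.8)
The plan is to use only the third Hirota equation (\ref{eq3}) and to restrict it to the line $t_1 = \bar t_1 = t_2 = \bar t_2 = \cdots = 0$. The key simplification is that on this line the operators $D(z),\bar D(\bar\xi)$ acting on $F$ collapse: by the definition of a symmetric solution, $\partial_k F|_{t_0} = \bar\partial_k F|_{t_0} = 0$ for every $k\geq 1$, hence $(D(z)F)|_{t_0} = \sum_{k\geq 1}\frac{z^{-k}}{k}(\partial_k F)|_{t_0} = 0$ and likewise $(\bar D(\bar\xi)F)|_{t_0} = 0$. Since restricting to this line is just evaluation in the variables $t_1,\bar t_1,\dots$, it commutes with $\partial_0$, so also $(\partial_0 D(z)F)|_{t_0} = \partial_0\bigl((D(z)F)|_{t_0}\bigr) = 0$ and $(\partial_0\bar D(\bar\xi)F)|_{t_0} = 0$.

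First I would write the exponent on the right-hand side of (\ref{eq3}) as $\partial_0^2 F + \partial_0 D(z)F + \partial_0\bar D(\bar\xi)F$ and restrict it: all but the first term vanish, leaving $F|_{t_0}'' = \log f$ by the definition $f = \exp(F|_{t_0}'')$, so the right-hand side of (\ref{eq3}) restricts to $\frac{1}{z\bar\xi}\exp(\log f) = \frac{f}{z\bar\xi}$. On the left-hand side, since restriction commutes with $\exp$ in the sense of formal power series, $\exp(-D(z)\bar D(\bar\xi)F)$ restricts to $\exp\bigl(-\sum_{k,l\geq 1}\frac{z^{-k}\bar\xi^{-l}}{kl}(\partial_k\bar\partial_l F)|_{t_0}\bigr)$. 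Thus (\ref{eq3}) on the line becomes the formal identity
$$
1 - \exp\Bigl(-\sum_{k,l\geq 1}\frac{z^{-k}\bar\xi^{-l}}{kl}\,(\partial_k\bar\partial_l F)|_{t_0}\Bigr) = \frac{f}{z\bar\xi}\, .
$$

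It then remains to extract the $(\partial_k\bar\partial_l F)|_{t_0}$ from this. Rearranging gives $\exp\bigl(-\sum_{k,l}\frac{z^{-k}\bar\xi^{-l}}{kl}(\partial_k\bar\partial_l F)|_{t_0}\bigr) = 1 - \frac{f}{z\bar\xi}$; taking logarithms and expanding $\log(1-w)$ in powers of $w = \frac{f}{z\bar\xi}$ yields
$$
\sum_{k,l\geq 1}\frac{z^{-k}\bar\xi^{-l}}{kl}\,(\partial_k\bar\partial_l F)|_{t_0} = \sum_{n\geq 1}\frac{f^n}{n}\,z^{-n}\bar\xi^{-n}\, .
$$
Comparing the coefficient of $z^{-i}\bar\xi^{-j}$ on both sides gives $(\partial_i\bar\partial_j F)|_{t_0} = 0$ when $i\neq j$, and $\frac{1}{i^2}(\partial_i\bar\partial_i F)|_{t_0} = \frac{f^i}{i}$, i.e. $(\partial_i\bar\partial_i F)|_{t_0} = i f^i$, when $i = j$, which is the assertion. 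I do not expect a real obstacle here; the only points requiring care are bookkeeping ones --- that all these operations commute in the sense of formal series, and that $1 - \frac{f}{z\bar\xi}$ has ``constant term'' $1$ as a series in $z^{-1},\bar\xi^{-1}$, so that its logarithm is a well-defined formal series.
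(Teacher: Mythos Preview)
Your proof is correct and follows essentially the same route as the paper: restrict the third Hirota equation (\ref{eq3}) to the line $t_k=\bar t_k=0$, use the symmetry condition to reduce the right-hand exponent to $F|_{t_0}''=\log f$, then take the logarithm of $1-\frac{f}{z\bar\xi}$ and compare coefficients. The paper's proof is terser but the logic is identical; your extra remarks about commutation of restriction with $\partial_0$ and about the formal-series manipulations being well defined are sound and just make explicit what the paper leaves implicit.
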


\begin{proof} From
$\partial_0\partial_k F\bigl\vert_{t_0}=
\partial_0\bar{\partial}_k F\bigl\vert_{t_0}=0$ for $k>0$
it follows that
$$\exp(\partial_0(\partial_0+D(z)+\bar D(\bar\xi))F)
\Bigl\vert_{t_0}=\exp (\p_0^2 F|_{t_0})=\exp(F|_{t_0}'')=f.$$
Moreover, from
$1-e^{-D(z)\bar D(\bar\xi)F}=
z^{-1}\bar\xi^{-1}e^{\partial_0(\partial_0+D(z)+
\bar D(\bar\xi))F}$ we have
$$-D(z)\bar D(\bar\xi) F\bigl\vert_{t_0}=\log \left (1-z^{-1}\bar\xi^{-1}
f\right )=-\sum\limits_{k=1}^\infty \frac{1}{k} z^{-k}\bar{\xi}^{-k} f^k.$$
Therefore, $\partial_i\bar\partial_jF\bigl\vert_{t_0}=0$ for $i\ne j$ and $\partial_i\bar\partial_i F\bigl\vert_{t_0}=i f^i$.
\end{proof}

\begin{lemma}\label{l2.2} The following relations hold
$$\partial_{i}\bar\partial_{i_1}\ldots\bar\partial_{i_k}F\Bigl \vert_{t_0}=\bar\partial_i\partial_{i_1}\ldots \partial_{i_k}
F\Bigl\vert_{t_0}= \biggl\{ \begin{matrix} 0 &\ \text{if} \quad i_1 +\ldots+i_k\ne i \\i_1\ldots i_k\partial_0^{k-1}(f^i)
& \ \text{if}\quad i_1+\ldots +i_k =i.
\end{matrix}$$
\end{lemma}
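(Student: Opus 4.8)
The plan is to reduce the mixed derivative $\partial_i\bar\partial_{i_1}\cdots\bar\partial_{i_k}F|_{t_0}$ to the two‑point function of Lemma \ref{symmetric} by means of the barred analogue of the expansion in Lemma‑Definition \ref{ld2.2}. Since equations (\ref{eq1})--(\ref{eq3}) are invariant under the exchange of the barred and unbarred time variables (with $\partial_0$ common to all three), Lemma‑Definition \ref{ld2.2} holds verbatim with every $\partial_{i_j}$, $\partial_{s_j}$ replaced by $\bar\partial_{i_j}$, $\bar\partial_{s_j}$, the operator $\partial_0$ and the universal constants $T_{i_1\ldots i_k}\!\left(\begin{matrix} s_1\ldots s_m \\ \ell_1\ldots \ell_m\end{matrix}\right)$ staying unchanged. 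I would first apply this barred expansion to $\bar\partial_{i_1}\cdots\bar\partial_{i_k}F$, then act with $\partial_i$ and use the Leibniz rule: the result is a sum of terms in each of which $\partial_i$ falls on exactly one factor $\partial_0^{\ell_a}\bar\partial_{s_a}F$, turning it into $\partial_0^{\ell_a}\partial_i\bar\partial_{s_a}F$ (as $\partial_i$ commutes with $\partial_0$), while the remaining factors keep the form $\partial_0^{\ell_j}\bar\partial_{s_j}F$.

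Next I would restrict to $t_0$. Because $F$ is symmetric and every $s_j\geq 1$, we have $\bar\partial_{s_j}F|_{t_0}=0$, and since the restriction commutes with $\partial_0$ this gives $\partial_0^{\ell_j}\bar\partial_{s_j}F|_{t_0}=0$ for every factor not hit by $\partial_i$. Hence every term with $m\geq 2$ vanishes, and only the $m=1$ term survives. For $m=1$ the constraints $s_1+\dots+s_m=i_1+\dots+i_k$ and $\ell_1+\dots+\ell_m=m+k-2$ force $s_1=i_1+\dots+i_k=:I$ and $\ell_1=k-1$, so the surviving contribution is
$$\frac{i_1\ldots i_k}{I}\, T_{i_1\ldots i_k}\!\left(\begin{matrix} I \\ k-1\end{matrix}\right)\, \partial_0^{\,k-1}\bigl(\partial_i\bar\partial_{I}F|_{t_0}\bigr).$$
By Lemma \ref{symmetric}, $\partial_i\bar\partial_I F|_{t_0}$ is $0$ unless $i=I$, in which case it equals $if^i$; this already yields the vanishing statement when $i\neq i_1+\dots+i_k$, and when $i=I$ it gives $i_1\ldots i_k\, T_{i_1\ldots i_k}\!\left(\begin{matrix} I \\ k-1\end{matrix}\right)\partial_0^{\,k-1}(f^i)$.

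It remains to prove the combinatorial identity $T_{i_1\ldots i_k}\!\left(\begin{matrix} i_1+\dots+i_k \\ k-1\end{matrix}\right)=1$, which I would establish by induction on $k$. For $k=1$ it is the normalization $T_{i_1}\!\left(\begin{matrix} i_1 \\ 0\end{matrix}\right)=1$. For $k\geq 2$, specializing the recursion of Lemma‑Definition \ref{ld2.2} to $m=1$ leaves only the index choice $i=j=1$, for which $s=I-i_k$, $\ell=k-2$, and the prefactor $\ell!/(\ell_1-1)!$ equals $1$; thus
$$T_{i_1\ldots i_k}\!\left(\begin{matrix} I \\ k-1\end{matrix}\right)=T_{i_1\ldots i_{k-1}}\!\left(\begin{matrix} I-i_k \\ k-2\end{matrix}\right)\, T_{I-i_k,\,i_k}(I).$$
The first factor is $1$ by the inductive hypothesis (since $I-i_k=i_1+\dots+i_{k-1}$), and the second factor is the $m=1$ case of Lemma‑Definition \ref{ld2.1}, where $T_{a,b}(a+b)=P_{a,b}(a+b)=1$ because $(a),(b)$ is the unique pair of one‑term positive sequences summing componentwise to $(a+b)$. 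This closes the induction and proves the first equality. The second equality, $\bar\partial_i\partial_{i_1}\cdots\partial_{i_k}F|_{t_0}=\partial_i\bar\partial_{i_1}\cdots\bar\partial_{i_k}F|_{t_0}$, follows by running the identical argument with the roles of the barred and unbarred variables interchanged (using the unbarred Lemma‑Definition \ref{ld2.2} and the same Lemma \ref{symmetric}, whose right‑hand side is symmetric in the two derivatives). The only genuinely delicate point is the bookkeeping in the last step — verifying that the $m=1$ specialization of the multi‑index recursion collapses exactly to the product of two "trivial" $T$‑values; everything else is an immediate consequence of the relation $\bar\partial_s F|_{t_0}=0$.
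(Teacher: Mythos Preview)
Your argument is correct and follows essentially the same route as the paper: expand the string of (barred) time derivatives via Lemma-Definition \ref{ld2.2}, apply the remaining single derivative, kill the $m\geq 2$ terms using the symmetry condition $\bar\partial_s F|_{t_0}=0$, and reduce the surviving $m=1$ term to Lemma \ref{symmetric} together with the identity $T_{i_1\ldots i_k}\!\bigl(\begin{smallmatrix} I \\ k-1\end{smallmatrix}\bigr)=1$ (which the paper uses implicitly here and proves later in Lemma \ref{l3.1}). One small correction: the recursion for the $T$-coefficients in Lemma-Definition \ref{ld2.2} is stated only for $k\geq 3$, with the $k=2$ case defined separately, so your induction should start at $k=2$ via $T_{i_1 i_2}\!\bigl(\begin{smallmatrix} I \\ 1\end{smallmatrix}\bigr)=T_{i_1 i_2}(I)=P_{i_1 i_2}(I)=1$ from Lemma-Definition \ref{ld2.1}, rather than at an undefined $k=1$ normalization (the $k=1$ case of the lemma itself is exactly Lemma \ref{symmetric}).
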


\begin{proof} The differentials $\partial$ and
$\bar\partial$ enter the Toda equations in a symmetric way.
This gives the first equality.
Moreover, according to Lemma-Definition \ref{l2.2}, we have
$$\bar{\partial}_i\partial_{i_1}\partial_{i_2}\ldots \partial_{i_k} F= \frac{i_1\ldots i_k}{i_1 +\ldots +i_k}\, \partial_0^{k-1} \bar{\partial}_i\partial_{i_1 +\ldots +i_k} F
$$
$$+\,\, \bar{\partial}_i\sum\limits_{m=2}^\infty
\sum\limits_{{s_1+\ldots +s_m=i_1+\ldots +i_k
\atop \ell_1+\ldots \ell_m=m+k-2}}
\frac{i_1\ldots i_k}{s_1\ldots s_m} \,T_{i_1\ldots i_k}\left(\begin{matrix} s_1\ldots s_m \\ \ell_1\ldots\ell_m\end{matrix}\right)
\partial^{\ell_1}_0\partial_{s_1} F\ldots \partial^{\ell_m}_0
\partial_{s_m} F.
$$
This equality and Lemma \ref{symmetric} gives the second equality in the assertion of Lemma \ref{l2.2}.
\end{proof}

\vspace{2ex}

Given a sequence of natural numbers $\{a_1,\dots,a_v\}$, we say that its representation as a union of non-intersecting non-empty subsequences
$\{b^j_1,...,b^j_{n_j}\}$ such that $b^j_1+\ldots +b^j_{n_j}=s_j$,
$\{a_1,\ldots ,a_v\}=\bigcup\limits_{j=1}^{m}\{b^j_1, \ldots ,b^j_{n_j}\}$,
is a {\it partition of $\{a_1,\ldots ,a_v\}$
of the type $\left(\begin{matrix} s_1\ldots s_m \\ n_1\ldots n_m\end{matrix}\right)$}. (The sequence
$(n_1, \ldots , n_m)$ is a partition of $v$ in the usual sense:
$\displaystyle{\sum_{i=1}^{m}n_i}=v$.)

\begin{lemma-definition} \label{ld2.3} The following relation holds for $k,\bar{k}>1$:
$$
\partial_{i_1}\dotsb \partial_{i_k}\bar\partial_{\bar i_1}\dotsb
\bar\partial_{\bar i_{\bar k}} F\Bigl\vert_{t_0}=
\sum\limits_{m=1}^\infty \!
\sum\limits_{{s_1+\ldots +s_m= i_1+\ldots +i_k=\bar{i}_1+\ldots +\bar{i}_{\bar{k}} \atop r_1+\dotsb +r_m=
k+\bar{k}-2}} \!\!\!
\tilde{N}_{\tiny{\left(\begin{matrix} i_1\ldots i_k \\ \bar{i}_1\ldots \bar{i}_{\bar{k}}\end{matrix}\right)}}\!\!
\left(\begin{matrix} s_1\ldots s_m \\ r_1\ldots r_m \end{matrix}\right) \partial_0^{r_1}f^{s_1}\ldots \partial_0^{r_m} f^{s_m},
$$
where
$$\tilde N_{\tiny{\left(\begin{matrix} i_1\ldots i_k \\ \bar{i}_1\ldots \bar{i}_{\bar{k}}\end{matrix}\right)}}
\left(\begin{matrix} s_1\ldots s_m \\ r_1\ldots r_m \end{matrix}\right)= \frac{i_1\ldots i_k\, \bar{i}_1\ldots \bar{i}_{\bar{k}}}{s_1\ldots s_m}
\sum T_{i_1\ldots i_k}\left(\begin{matrix} s_1\, \ldots \, s_m
\\ r_1\! -\! n_1+1
\ldots r_m\! -\! n_m+1\end{matrix}\right)$$
and the summation is carried over all partitions of the set
$\{\bar i_1,...,\bar i_{\bar k}\}$ of type
$\left(\begin{matrix} s_1\ldots s_m \\ n_1\ldots n_m\end{matrix}\right)$.
\end{lemma-definition}

\begin{proof}According to Lemma-Definition \ref{ld2.2}, $$\partial_{i_1}\ldots\partial_{i_k}\bar\partial_{\bar i_1}\ldots \bar\partial_{\bar i_{\bar k}}F$$
$$=\,\,
\bar\partial_{\bar i_{\bar i}}\ldots \bar\partial_{\bar i_{\bar k}}
\Bigl(\sum\limits_{m=1}^\infty \sum\limits_{{
s_1+\ldots+s_m=i_1+\ldots +i_k\atop l_1+\ldots +l_m=m+k-2}} \frac{i_1\ldots i_k}{s_1\ldots s_m}\cdot\ T_{i_1\ldots i_k}\left(\begin{matrix} s_1\ldots s_m \\ l_1\ldots l_m\end{matrix}\right)\partial^{l_1}_0\partial_{s_1} F\ldots \partial^{l_m}_0\partial_{s_m} F\Bigr )$$

$$=\sum\limits_{m=1}^\infty
\sum\limits_{{s_1+\ldots+s_m=i_1+\ldots +i_k\atop l_1+\ldots +l_m=m+k-2}}
\sum\left (\frac{i_1\ldots i_k}{s_1\ldots s_m} T_{i_1\ldots i_k}\left(\begin{matrix} s_1\ldots s_m \\ l_1\ldots
l_m\end{matrix}\right)\right.$$
$$\left.
\phantom{\frac{i}{s}}\times \,\,
\partial^{l_1}_0 \partial_{s_1}\bar\partial_{\bar j_1^1}
\ldots \bar\partial_{\bar j_{n_1}^1}F\ldots \partial^{l_m}_0 \partial_{s_m}\bar\partial_{\bar j_1^m}\ldots \bar
\partial_{\bar j_{n_m}^m} F\right ),$$
where the interior summation is carried over all partitions of
$\{\bar i_1,\ldots ,\bar i_{\bar k}\}$ of type $\left(\begin{matrix} s_1\ldots s_m \\ n_1\ldots n_m\end{matrix}\right)$.
Therefore, according to Lemma \ref{l2.2}, $$\partial_{i_1}\ldots\partial_{i_k}\bar\partial_{\bar i_1}\ldots  \bar\partial_{\bar i_{\bar k}}F$$
$$= \sum\limits_{m=1}^\infty
\sum\limits_{{s_1+\ldots+s_m=i_1+\ldots +
i_k\atop l_1+\ldots +l_m=m+k-2}}\!\!\!\!\!\!\!
\frac{i_1\ldots i_k\, \bar{i}_1\ldots
\bar{i}_{\bar{k}}}{s_1\dots s_m}\sum\left (
T_{i_1\ldots i_k}\left(\begin{matrix} s_1\ldots s_m \\ l_1\ldots
l_m\end{matrix}\right)\partial^{l_1+n_1-1}_0(f^{s_1})\dots
\partial^{l_m+n_m-1}_0(f^{s_m})\right )
$$
$$=\sum\limits_{m=1}^\infty
\sum\limits_{{s_1+\ldots +s_m=i_1+\ldots +i_k\atop
r_1+\ldots +r_m=k+\bar{k}-2}}\!\!\!\!
\frac{i_1\ldots i_k\, \bar{i}_1\ldots \bar{i}_{\bar{k}}}{s_1\dots s_m}
$$
$$
\times \, \sum\left
(T_{i_1\ldots i_k}\left(\begin{matrix} s_1\ldots s_m \\ r_1-n_1+1 \ldots r_m-n_m+1\end{matrix}\right)\partial^{r_1}_0(f^{s_1}) \dots\partial^{r_m}_0(f^{s_m})\right ),$$
where the interior summation is carried over all partitions of the set
$\{\bar i_1,\ldots ,\bar i_{\bar k}\}$ of type $\left(\begin{matrix} s_1\ldots s_m \\ n_1\ldots n_m\end{matrix}\right)$.
\end{proof}

\vspace{1ex}

For $k>1$ we put $$\tilde N_{\tiny{\left(\begin{matrix} i \\ i_1\ldots i_k\end{matrix}\right)}}
\left(\begin{matrix} s\\r \end{matrix}\right)=
\tilde N_{\tiny{\left(\begin{matrix} i_1\ldots i_k\\ i\end{matrix} \right)}}\left(\begin{matrix} s\\ r \end{matrix}\right)=\delta_{i,i_1+\ldots +i_k}\delta_{s,i}\delta_{r,k-1}i_1\ldots i_k$$
and
$$\tilde N_{\tiny{\left(\begin{matrix} i \\ i_1\ldots i_k\end{matrix}\right)}}
\left(\begin{matrix} s_1\ldots s_m \\ r_1\ldots r_m \end{matrix}\right)=\tilde N_{\tiny{\left(\begin{matrix} i_1\ldots i_k\\i\end{matrix}\right)}}
\left(\begin{matrix} s_1\ldots s_m \\ r_1\ldots r_m \end{matrix}\right)=0 \quad \text{for}\quad  m>1$$

Lemma \ref{l2.2} and Lemma-Definition \ref{ld2.3} implies

\begin{theorem}\label{t2.1}
Any symmetric formal solution to the dispersionless 2DTL has the form
\beq\label{series}
F=F(t_0)\,+ \sum\limits_{i>0}^{\infty}if^it_i\bar{t}_i  +\sum\limits_{|\Delta|=|\bar{\Delta}|}\!\! \sum
\limits_{{s_1+\ldots +s_m= |\Delta|\atop r_1+\ldots +r_m= \ell (\Delta)+\ell (\bar{\Delta})-2>0}} \!\!\!\!\!\!\!\!\!\!\!\!
N_{(\Delta|\bar{\Delta})} \left(\begin{matrix} s_1\ldots s_m \\ r_1\ldots r_m \end{matrix}\right) \partial_0^{r_1}(f^{s_1})\ldots
\partial_0^{r_m}(f^{s_m})\, t_{\Delta}\bar t_{\bar{\Delta}},
\eeq
where $f(t_0)=\exp (\p_0^2 F(t_0))$,
\beq\label{NN}
N_{(\Delta|\bar{\Delta})}
\left(\begin{matrix} s_1\ldots s_m \\ r_1\ldots r_m \end{matrix}\right) = \frac{1}{\sigma (\Delta)\sigma (\bar{\Delta})} \,
\displaystyle
\tilde N_{\tiny{\left(\begin{matrix} \mu_1\ldots \mu_k\\ \bar \mu_1 \ldots \bar \mu_{\bar k} \end{matrix}\right)}}
\left(\begin{matrix} s_1\ldots s_m \\ r_1\ldots r_m \end{matrix}\right)
\eeq
and $[\mu_1, \ldots , \mu_{\ell}]=\Delta$,
$[\bar \mu_1, \ldots , \bar \mu_{\ell}]=\bar{\Delta}$.
\end{theorem}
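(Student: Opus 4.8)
The plan is to read Theorem~\ref{t2.1} off Lemma-Definition~\ref{ld2.3}: it only remains to express the Taylor coefficients $F(\Delta|\bar\Delta|t_0)$ of the series (\ref{formal}) through restricted mixed time derivatives of $F$, and then substitute.

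First I would record the needed identification. Write $\Delta=[\mu_1,\dots,\mu_\ell]=(1^{m_1}2^{m_2}\dots)$ and $\bar\Delta=[\bar\mu_1,\dots,\bar\mu_{\bar\ell}]=(1^{\bar m_1}2^{\bar m_2}\dots)$, so $t_\Delta=\prod_k t_k^{m_k}$, $\bar t_{\bar\Delta}=\prod_k\bar t_k^{\bar m_k}$, and the operator $\partial_{\mu_1}\cdots\partial_{\mu_\ell}\bar\partial_{\bar\mu_1}\cdots\bar\partial_{\bar\mu_{\bar\ell}}$ equals $\prod_k\partial_k^{m_k}\prod_k\bar\partial_k^{\bar m_k}$. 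Apply this operator to (\ref{formal}) term by term and restrict to $t_1=\bar t_1=t_2=\bar t_2=\dots=0$. A monomial $t_{\Delta'}\bar t_{\bar\Delta'}$ is either annihilated by the operator (if $m_k>m'_k$ or $\bar m_k>\bar m'_k$ for some $k$) or still carries a positive power of some $t_k$ or $\bar t_k$ and therefore dies under the restriction, except when $\Delta'=\Delta$ and $\bar\Delta'=\bar\Delta$ as multisets; the surviving diagonal term produces the numerical factor $\prod_k m_k!\,\prod_k\bar m_k!=\sigma(\Delta)\sigma(\bar\Delta)$ times $F(\Delta|\bar\Delta|t_0)$. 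Hence
\[
F(\Delta|\bar\Delta|t_0)=\frac{1}{\sigma(\Delta)\sigma(\bar\Delta)}\,\partial_{\mu_1}\cdots\partial_{\mu_\ell}\bar\partial_{\bar\mu_1}\cdots\bar\partial_{\bar\mu_{\bar\ell}}F\big|_{t_0},
\]
and for $\Delta=\bar\Delta=\emptyset$ this is the constant term $F(\emptyset|\emptyset|t_0)=F(t_0)$.

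Next I would substitute Lemma-Definition~\ref{ld2.3} into the right-hand side, with $(i_1,\dots,i_k)=(\mu_1,\dots,\mu_\ell)$ and $(\bar i_1,\dots,\bar i_{\bar k})=(\bar\mu_1,\dots,\bar\mu_{\bar\ell})$, so that $k=\ell(\Delta)$, $\bar k=\ell(\bar\Delta)$, $i_1+\dots+i_k=|\Delta|$, $\bar i_1+\dots+\bar i_{\bar k}=|\bar\Delta|$. The constraint $s_1+\dots+s_m=i_1+\dots+i_k=\bar i_1+\dots+\bar i_{\bar k}$ built into that lemma makes the inner sum empty unless $|\Delta|=|\bar\Delta|$ --- which is exactly why the outer sum in (\ref{series}) is restricted to $|\Delta|=|\bar\Delta|$ --- while for such pairs the surviving conditions $s_1+\dots+s_m=|\Delta|$ and $r_1+\dots+r_m=m+k+\bar k-2=m+\ell(\Delta)+\ell(\bar\Delta)-2$ coincide with those of (\ref{series}). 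Dividing the resulting differential polynomial in $f$ by $\sigma(\Delta)\sigma(\bar\Delta)$ then reads off $N_{(\Delta|\bar\Delta)}$ and gives (\ref{NN}); that $f(t_0)=\exp(\partial_0^2F(t_0))$ is the same $f=\exp(F|_{t_0}'')$ as in Lemmas~\ref{symmetric}--\ref{l2.2} is immediate, and because the $T_{i_1\dots i_k}$ and the partition counts of Lemma-Definition~\ref{ld2.3} are purely combinatorial, the coefficients $N_{(\Delta|\bar\Delta)}$ are universal.

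The only step calling for attention is the bookkeeping just used: verifying that no monomial other than $t_\Delta\bar t_{\bar\Delta}$ survives the restricted differentiation and that repeated parts contribute precisely $\sigma(\Delta)\sigma(\bar\Delta)$. One should also note that the left-hand side of the displayed identity is symmetric under permutations of the $\mu_i$ (and of the $\bar\mu_j$), so that $N_{(\Delta|\bar\Delta)}$ in (\ref{NN}) is well defined even though the notation $\tilde N$ of Lemma-Definition~\ref{ld2.3} fixes an ordering of the parts. Beyond this the argument is a mechanical substitution, so I do not anticipate a genuine obstacle.
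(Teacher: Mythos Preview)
Your proposal is correct and follows essentially the same approach as the paper, which simply notes that the theorem is a direct consequence of Lemma-Definition~\ref{ld2.3}. You have made explicit the bookkeeping step---identifying the Taylor coefficient $F(\Delta|\bar\Delta|t_0)$ with $\frac{1}{\sigma(\Delta)\sigma(\bar\Delta)}$ times the restricted mixed derivative---that the paper leaves implicit, and your observation about the emergence of the constraint $|\Delta|=|\bar\Delta|$ and the well-definedness of $N_{(\Delta|\bar\Delta)}$ under reordering of parts are useful clarifications.
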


\vspace{1ex}

Theorem \ref{t2.1} implies that the symmetric solutions are parameterized by a function of one variable. The Taylor coefficients in the right hand side of equation (\ref{series}) are differential polynomials in the function $f(t_0)=e^{F''(t_0)}$ with universal coefficients $N_{(\Delta|\bar{\Delta})}\left(\begin{matrix} s_1\ldots s_m \\ r_1\ldots r_m \end{matrix}\right)$. Explicit formulas for the simplest
coefficients $N_{(\Delta|\bar{\Delta})}\left(\begin{matrix} s_1\ldots s_m \\ r_1\ldots r_m \end{matrix}\right)$ are given below.

\subsection{Explicit formulas for some coefficients}

\begin{lemma}\label{l3.1}
Let $\Delta$, $\bar{\Delta}$ be two Young diagrams such that
$|\Delta |=|\bar \Delta |=d$, $\ell(\Delta)+\ell(\bar{\Delta})>2 $. Then
$$N_{(\Delta|\bar{\Delta})} \left(\begin{matrix}|\Delta|\\\ell(\Delta)+\ell(\bar{\Delta})-2  \end{matrix}\right)=
\frac{\rho(\Delta) \rho(\bar{\Delta})} {d\sigma(\Delta)\sigma(\bar{\Delta})}$$
and $N_{(\Delta|\bar{\Delta})} \left(\begin{matrix}s\\r \end{matrix}\right)=0$
otherwise.
\end{lemma}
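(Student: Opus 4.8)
The plan is to isolate the single-block contribution ($m=1$) in Lemma-Definition \ref{ld2.3}, since $N_{(\Delta|\bar\Delta)}\!\left(\begin{smallmatrix}s\\r\end{smallmatrix}\right)$ corresponds exactly to $m=1$ in the sum \eqref{series}. First I would observe that for $m=1$ the constraint $s_1+\ldots+s_m=|\Delta|$ forces $s=|\Delta|=d$, so there is at most one admissible value of $s$, and correspondingly $r=m+\ell(\Delta)+\ell(\bar\Delta)-2=\ell(\Delta)+\ell(\bar\Delta)-1$. This already gives the vanishing statement ``$N_{(\Delta|\bar\Delta)}\!\left(\begin{smallmatrix}s\\r\end{smallmatrix}\right)=0$ otherwise'': any $(s,r)$ not equal to $\bigl(d,\ell(\Delta)+\ell(\bar\Delta)-1\bigr)$ violates one of the two constraints appearing under the sum in Theorem \ref{t2.1}, hence contributes no term. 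So the real content is the evaluation of the one surviving coefficient.

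Next I would unwind the definition \eqref{NN} together with the formula for $\tilde N$ in Lemma-Definition \ref{ld2.3} in the case $m=1$. With $m=1$, a partition of the set $\{\bar\mu_1,\ldots,\bar\mu_{\bar k}\}$ of type $\left(\begin{smallmatrix}s_1\\ n_1\end{smallmatrix}\right)$ must have $n_1=\bar k=\ell(\bar\Delta)$ (the whole set is one block) and $s_1=\bar\mu_1+\ldots+\bar\mu_{\bar k}=d$, and there is exactly one such partition. Thus the sum over partitions collapses to a single term, giving
$$
\tilde N_{\tiny{\left(\begin{matrix}\mu_1\ldots\mu_k\\ \bar\mu_1\ldots\bar\mu_{\bar k}\end{matrix}\right)}}\!\left(\begin{matrix} d \\ r\end{matrix}\right)=\frac{\mu_1\ldots\mu_k\,\bar\mu_1\ldots\bar\mu_{\bar k}}{d}\; T_{\mu_1\ldots\mu_k}\!\left(\begin{matrix} d \\ r-\bar k+1\end{matrix}\right),
$$
where $r-\bar k+1=\ell(\Delta)+\ell(\bar\Delta)-1-\ell(\bar\Delta)+1=\ell(\Delta)$. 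Recognizing $\mu_1\ldots\mu_k=\rho(\Delta)$ and $\bar\mu_1\ldots\bar\mu_{\bar k}=\rho(\bar\Delta)$, and dividing by $\sigma(\Delta)\sigma(\bar\Delta)$ as in \eqref{NN}, the claim reduces to showing
$$
T_{\mu_1\ldots\mu_k}\!\left(\begin{matrix} d \\ \ell(\Delta)\end{matrix}\right)=1.
$$

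Finally I would establish this last normalization by induction on $k=\ell(\Delta)$, using the recursion for $T_{i_1\ldots i_k}$ in Lemma-Definition \ref{ld2.2}. The base case $k=1$ needs $T_{i_1}\!\left(\begin{smallmatrix}i_1\\1\end{smallmatrix}\right)=1$, which follows from the base data in Lemma-Definition \ref{ld2.1} (one checks $P_{ij}$ and $T_{ij}$ for the single-block case, or reads off directly from the identity $\partial_{i_1}F=\partial_{i_1}F$). For the inductive step, in the recursion the single output block $\left(\begin{smallmatrix}d\\ \ell(\Delta)\end{smallmatrix}\right)$ can only be produced when the ``merged'' block already covers all of $s_1,\ldots,s_m$ — i.e. $i=1$, $j=m=1$ on the right-hand side — so that $s=(s_1)-i_k=\mu_1+\ldots+\mu_{k-1}$ and $\ell=\ell_1-1$ with $\ell_1=\ell(\Delta)$, $\ell=\ell(\Delta)-1=\ell([\mu_1,\ldots,\mu_{k-1}])$; the multinomial prefactor $\ell!/((\ell_1-1)!)$ equals $1$, the factor $T_{\mu_1\ldots\mu_{k-1}}\!\left(\begin{smallmatrix}s\\ \ell(\Delta)-1\end{smallmatrix}\right)$ is $1$ by the inductive hypothesis, and $T_{s,\mu_k}(d)=T_{s,i_k}\!\left(\begin{smallmatrix}d\\ 1\end{smallmatrix}\right)$ is again $1$ from the base case of Lemma-Definition \ref{ld2.1} (a one-argument $T$ with the unique trivial partition). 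Hence $T_{\mu_1\ldots\mu_k}\!\left(\begin{smallmatrix}d\\ \ell(\Delta)\end{smallmatrix}\right)=1$, completing the computation. The main obstacle I anticipate is bookkeeping: correctly tracking the combinatorial constraints in the $T$-recursion to confirm that exactly one term survives and that all the prefactors ($(-1)^{m+1}$, the $1/k$, the multinomials) collapse to $1$ — in particular making sure no sign or multiplicity is lost when one matches the two constraints $s_1+\ldots+s_m=|\Delta|$ and $r_1+\ldots+r_m=m+\ell(\Delta)+\ell(\bar\Delta)-2$ against the $m=1$ specialization.
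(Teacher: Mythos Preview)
Your overall strategy matches the paper's proof exactly: specialize Lemma-Definition \ref{ld2.3} to $m=1$, reduce to a single $T$-value, and establish that value by induction via the recursion of Lemma-Definition \ref{ld2.2}. However, there is a systematic off-by-one error running through your argument.

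The lemma asks you to evaluate $N_{(\Delta|\bar\Delta)}\!\left(\begin{smallmatrix}d\\ \ell(\Delta)+\ell(\bar\Delta)-2\end{smallmatrix}\right)$, not the value at $r=\ell(\Delta)+\ell(\bar\Delta)-1$. (The constraint ``$r_1+\ldots+r_m=m+\ell(\Delta)+\ell(\bar\Delta)-2$'' printed in Theorem \ref{t2.1} is a typo; tracing the proof of Lemma-Definition \ref{ld2.3} gives $r_i=l_i+n_i-1$ with $\sum l_i=m+k-2$ and $\sum n_i=\bar k$, hence $\sum r_i=k+\bar k-2$, with no $m$.) With the correct $r=\ell(\Delta)+\ell(\bar\Delta)-2$ one finds $r-\bar k+1=\ell(\Delta)-1$, so the quantity to evaluate is $T_{\mu_1\ldots\mu_k}\!\left(\begin{smallmatrix}d\\ k-1\end{smallmatrix}\right)$, not $T_{\mu_1\ldots\mu_k}\!\left(\begin{smallmatrix}d\\ k\end{smallmatrix}\right)$.

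This matters: your version is actually false. The definitions start at $k=2$, where Lemma-Definition \ref{ld2.2} says $T_{i_1i_2}\!\left(\begin{smallmatrix}s\\ \ell\end{smallmatrix}\right)=0$ unless $\ell=1$; thus $T_{i_1i_2}\!\left(\begin{smallmatrix}i_1+i_2\\ 2\end{smallmatrix}\right)=0$, contradicting your inductive claim at $k=2$. The correct base case is $T_{i_1i_2}\!\left(\begin{smallmatrix}i_1+i_2\\ 1\end{smallmatrix}\right)=T_{i_1i_2}(i_1+i_2)=P_{i_1i_2}(i_1+i_2)=1$, and the induction then yields $T_{i_1\ldots i_k}\!\left(\begin{smallmatrix}i_1+\ldots+i_k\\ k-1\end{smallmatrix}\right)=1$ exactly as in the paper. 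Your appeal to a $k=1$ base case with $T_{i_1}\!\left(\begin{smallmatrix}i_1\\ 1\end{smallmatrix}\right)=1$ is also inconsistent: matching $\partial_{i_1}F=\partial_{i_1}F$ to the format of Lemma-Definition \ref{ld2.2} would force $\ell_1=m+k-2=0$, i.e.\ $T_{i_1}\!\left(\begin{smallmatrix}i_1\\ 0\end{smallmatrix}\right)=1$. Once you shift all the $\ell$- and $r$-indices down by one, your argument becomes the paper's proof verbatim.
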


\begin{proof} According to our definitions
$T_{ij}\left(\begin{matrix} i+j\\ 1\end{matrix}\right)=
T_{ij}(i+j)=P_{ij}(i+j)=1$
and $T_{ij}\left(\begin{matrix} s \\ l\end{matrix}\right)=0$ in other cases.
Therefore, $$T_{i_1\ldots i_k}\left(\begin{matrix} i_1+\ldots+i_k\\ k-1\end{matrix}\right)=
T_{i_1\ldots i_{k-1}}\left(\begin{matrix} i_1+\ldots +i_{k-1} \\ k-2\end{matrix}\right)T_{(i_1+\ldots+i_{k-1})i_k}(i_1+\ldots+i_k)=1$$
and $T_{i_1\ldots i_k}\left(\begin{matrix} s \\ l\end{matrix}\right)=0$
in other cases.
Let the sequences $(i_1, \ldots , i_{\ell (\Delta )})$,
$(\bar i_1, \ldots , \bar i_{\ell (\bar \Delta)})$ be any permutations
of $[\mu_1, \ldots , \mu_{\ell}]$, $[\bar \mu_1, \ldots , \bar \mu_{\ell}]$
respectively, where
$[\mu_1, \ldots , \mu_{\ell}]= \Delta$, $[\bar \mu_1, \ldots , \bar \mu_{\ell}]=\bar{\Delta}$.
Then
$$\tilde N_{\tiny{\left(\begin{matrix} i_1\ldots i_k \\
\bar i_1\ldots \bar i_{\bar k} \end{matrix}\right)}} \left(\begin{matrix}|
\Delta|\\\ell(\Delta)+\ell(\bar{\Delta})-2  \end{matrix}\right)=\frac{\rho(\Delta) \rho(\bar{\Delta})} {|\Delta|}\,
T_{i_1\ldots i_k}\left(\begin{matrix}|\Delta|\\\ell(\Delta)-1 \end{matrix}\right)=\frac{\rho(\Delta) \rho(\bar{\Delta})} {|\Delta|}$$ and $\tilde N_{\tiny{\left(\begin{matrix} i_1\dots i_k \\ \bar i_1\dots \bar i_{\bar k} \end{matrix}\right)}} \left(\begin{matrix} s \\ r \end{matrix}\right)=0$ in other cases.
Hence $$N_{(\Delta|\bar{\Delta})} \left(\begin{matrix}|\Delta|\\\ell(\Delta)+\ell(\bar{\Delta})-2  \end{matrix}\right)= \frac{1}{\sigma(\Delta)\sigma(\bar{\Delta})} \, \tilde N_{\tiny{\left(\begin{matrix} i_1\ldots i_k \\ \bar i_1\dots \bar i_{\bar k} \end{matrix}\right)}} \left(\begin{matrix}|\Delta|\\\ell(\Delta)+\ell(\bar{\Delta})-2  \end{matrix}\right)=  \frac{\rho(\Delta) \rho(\bar{\Delta})} {|\Delta|\sigma(\Delta)\sigma(\bar{\Delta})}$$
and $N_{(\Delta|\bar{\Delta})} \left(\begin{matrix}s\\r \end{matrix}\right)=0$
otherwise.
\end{proof}

\begin{lemma}\label{l3.2}
$$\tilde N_{\tiny{\left(\begin{matrix} i_1 i_2 \\ \bar{i}_1 \bar{i}_{2}\end{matrix}\right)}}
\left(\begin{matrix} \bar{i}_1 \bar{i}_2 \\ 1 1 \end{matrix}\right)
= \tilde N_{\tiny{\left(\begin{matrix} i_1 i_2 \\ \bar{i}_1 \bar{i}_{2}\end{matrix}\right)}}
\left(\begin{matrix} \bar{i}_2 \bar{i}_1 \\ 1 1 \end{matrix}\right)
= -\frac{i_1i_2}{2\sigma([i_1, i_2])\sigma([\bar{i}_1, \bar{i}_2])}\min\{i_1, i_2, \bar{i}_1,\bar{i}_{2}\})$$
and $\tilde N_{\tiny{\left(\begin{matrix} i_1 i_2 \\ \bar{i}_1 \bar{i}_{2}\end{matrix}\right)}}
\left(\begin{matrix} s_1 s_2 \\ r_1 r_2 \end{matrix}\right) = 0$ otherwise.
\end{lemma}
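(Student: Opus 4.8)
The plan is to specialize the general formula from Lemma-Definition \ref{ld2.3}, now in the form of the coefficient $\tilde N$, to the case $k=\bar k=2$ and work out the two-factor contributions explicitly. First I would record the constraints on the admissible $\left(\begin{smallmatrix} s_1 s_2 \\ r_1 r_2\end{smallmatrix}\right)$: here $i_1+i_2=\bar i_1 +\bar i_2$, $m=2$, so $s_1+s_2=i_1+i_2$ and $r_1+r_2=m+k+\bar k -2=4$. The summation in the definition of $\tilde N$ runs over partitions of the two-element set $\{\bar i_1,\bar i_2\}$ of type $\left(\begin{smallmatrix} s_1 s_2 \\ n_1 n_2\end{smallmatrix}\right)$; with only two elements there are essentially two possibilities: either $(n_1,n_2)=(2,0)$ is disallowed (parts must be non-empty and $m=2$ forces both $s_j$ present), so $(n_1,n_2)=(1,1)$ and the two sub-sequences are the singletons $\{\bar i_1\}$ and $\{\bar i_2\}$ in one of the two orders. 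That forces $\{s_1,s_2\}=\{\bar i_1,\bar i_2\}$, giving the two stated arrangements $\left(\begin{smallmatrix}\bar i_1 \bar i_2\\ \cdot\ \cdot\end{smallmatrix}\right)$ and $\left(\begin{smallmatrix}\bar i_2 \bar i_1\\ \cdot\ \cdot\end{smallmatrix}\right)$, and with $n_1=n_2=1$ the shifted top row in the $T$-argument becomes $r_j-n_j+1=r_j$, so we need $T_{i_1 i_2}\left(\begin{smallmatrix} \bar i_1 \bar i_2 \\ r_1 r_2\end{smallmatrix}\right)$ with $r_1+r_2=4$.

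Next I would invoke the explicit reduction of $T_{i_1 i_2}$ given in Lemma-Definition \ref{ld2.2}: $T_{i_1 i_2}\left(\begin{smallmatrix} s_1 s_2 \\ \ell_1 \ell_2\end{smallmatrix}\right)$ equals $T_{i_1 i_2}(s_1,s_2)$ when $\ell_1=\ell_2=1$ and vanishes otherwise. Since here $(\ell_1,\ell_2)=(r_1,r_2)$ must equal $(1,1)$, the constraint $r_1+r_2=4$ is incompatible unless — wait, this is exactly the subtlety: the surviving term is not the $m=2$ term but must come with the extra $\partial_0$'s packaged correctly. I would therefore be careful to track that in Lemma-Definition \ref{ld2.3} the admissible $(\ell_1,\ell_2)$ in the inner $T$ satisfy $\ell_1+\ell_2=m+k-2=2$, i.e. $\ell_1=\ell_2=1$, and the shift $r_j=\ell_j+n_j-1$ with $n_j=1$ indeed gives $r_j=\ell_j$, hence $r_1=r_2=1$ — so in fact $\left(\begin{smallmatrix}s_1 s_2\\ r_1 r_2\end{smallmatrix}\right)=\left(\begin{smallmatrix}\bar i_1 \bar i_2\\ 1\ 1\end{smallmatrix}\right)$ or $\left(\begin{smallmatrix}\bar i_2 \bar i_1\\ 1\ 1\end{smallmatrix}\right)$, matching the statement, and all other $\left(\begin{smallmatrix}s_1 s_2\\ r_1 r_2\end{smallmatrix}\right)$ give zero. (I should double-check the apparent clash with $r_1+r_2=m+k+\bar k-2=4$: the resolution is that the stated two cases have $r_1+r_2=2$, not $4$, because the general bound in Lemma-Definition \ref{ld2.3} is for the largest value of $m$ and this $m=2$ term sits at the bottom; I would state this cleanly rather than leave it implicit.)

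It then remains to evaluate $T_{i_1 i_2}(\bar i_1,\bar i_2)$ and assemble the prefactor. From Lemma-Definition \ref{ld2.1}, $T_{i_1 i_2}(p_1,p_2)=\sum_{k>0,\ n_i>0,\ n_1+\cdots+n_k=2}\frac{(-1)^{3}}{k\, n_1!\cdots n_k!}P_{i_1 i_2}(\cdots)$; with two parts the only options are $k=1$ (one block, $n_1=2$) contributing $-\tfrac12 P_{i_1 i_2}(p_1+p_2)$ and $k=2$ ($n_1=n_2=1$) contributing $+\tfrac12 P_{i_1 i_2}(p_1,p_2)$. Now $P_{i_1 i_2}(i_1+i_2)=1$, and $P_{i_1 i_2}(p_1,p_2)$ counts pairs $(a_1,a_2),(c_1,c_2)$ of positive integers with $a_1+a_2=i_1$, $c_1+c_2=i_2$, $a_1+c_1=p_1$: elementary counting gives this equals $\min\{i_1,i_2,p_1,p_2\}$ when $p_1+p_2=i_1+i_2$ (the number of valid $a_1$ subject to $1\le a_1\le i_1-1$, $1\le p_1-a_1\le i_2-1$). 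Hence $T_{i_1 i_2}(\bar i_1,\bar i_2)=\tfrac12\bigl(\min\{i_1,i_2,\bar i_1,\bar i_2\}-1\bigr)$ — and here I would recheck the constant term: actually $P_{i_1 i_2}(i_1+i_2)=1$ cancels against part of the min-count, and the clean outcome is $-\tfrac12\min\{i_1,i_2,\bar i_1,\bar i_2\}$ after the prefactor $\frac{i_1 i_2}{s_1 s_2}=\frac{i_1 i_2}{\bar i_1 \bar i_2}$ and the normalization by $\sigma([i_1,i_2])\sigma([\bar i_1,\bar i_2])$ are folded in. \textbf{The main obstacle} is precisely this bookkeeping of the constant shift in the $P_{i_1 i_2}$ count versus the $k=1$ term: getting the combinatorial identity $-\tfrac12 P_{i_1 i_2}(i_1{+}i_2)+\tfrac12 P_{i_1 i_2}(\bar i_1,\bar i_2)=-\tfrac12\min\{i_1,i_2,\bar i_1,\bar i_2\}$ exactly right, including boundary cases where some $i_j$ equals the min, is the one place a sign or an off-by-one can creep in; everything else is a direct substitution into Lemma-Definition \ref{ld2.2} and \ref{ld2.3}.
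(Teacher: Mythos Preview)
Your approach is exactly the paper's: specialize Lemma-Definition~\ref{ld2.3} to $k=\bar k=m=2$, observe that partitions of $\{\bar i_1,\bar i_2\}$ into two nonempty parts force $(s_1,s_2)\in\{(\bar i_1,\bar i_2),(\bar i_2,\bar i_1)\}$ with $n_1=n_2=1$, and then evaluate $T_{i_1i_2}(\bar i_1,\bar i_2)$ via the $P$-counts from Lemma-Definition~\ref{ld2.1}. There are, however, two concrete slips in the bookkeeping you flagged. First, the sign on the $k=2$ term of $T_{ij}(p_1,p_2)$ is wrong: the global factor $(-1)^{m+1}=(-1)^3=-1$ multiplies \emph{both} the $k=1$ and $k=2$ contributions, so
\[
T_{ij}(p_1,p_2)\;=\;-\tfrac12\,P_{ij}(p_1{+}p_2)\;-\;\tfrac12\,P_{ij}(p_1,p_2),
\]
not $-\tfrac12+\tfrac12$. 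Second, the count is $P_{ij}(p_1,p_2)=\min\{i,j,p_1,p_2\}-1$, not $\min$ itself: your own inequalities $1\le a_1\le i_1-1$ and $1\le p_1-a_1\le i_2-1$ are strict at both ends. With both corrections you get $T_{i_1i_2}(\bar i_1,\bar i_2)=-\tfrac12\cdot 1-\tfrac12(\min-1)=-\tfrac12\min$ cleanly, with no leftover constant; the prefactor $i_1i_2\bar i_1\bar i_2/(s_1s_2)=i_1i_2$ then gives $\tilde N=-\tfrac12\,i_1i_2\min$, and division by $\sigma([i_1,i_2])\sigma([\bar i_1,\bar i_2])$ yields $N$.

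On your worry about $r_1+r_2$: your conclusion $r_1=r_2=1$ is correct, and the ``largest-$m$ bound'' explanation is not the right one. The displayed condition $r_1+\cdots+r_m=m+k+\bar k-2$ in Lemma-Definition~\ref{ld2.3} (and Theorem~\ref{t2.1}) carries a typo: from the proof one has $r_j=\ell_j+n_j-1$ with $\sum_j\ell_j=m+k-2$ and $\sum_j n_j=\bar k$, hence $\sum_j r_j=k+\bar k-2$ with no extra $m$. For $k=\bar k=2$ this gives $r_1+r_2=2$, consistent with $(1,1)$; it is also consistent with the $m=1$ case recorded in Lemma~\ref{l3.1}.
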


\begin{proof}
The number $P_{ij}(r_1,r_2)$ is the number of solutions to
the system of equation $i_1+i_2=i$, $j_1+j_2=j$, $i_1+j_1=r_1$,
$i_2+j_2=r_2$ in positive integer numbers $i_1,i_2,j_1,j_2$.
This system is equivalent to the system $i_1=r_1-j_1$, $i_2=i+j_1-r_1$, $j_2=j-j_1$, $i+j=r_1+r_2$. Hence $P_{ij}(r_1,r_2)=0$ if $i+j\neq r_1+r_2$.

\vspace{1ex}

In other cases the number of solutions is the number of positive $j_1$ such that $r_1-j_1>0$, $i+j_1-r_1>0$, $j-j_1>0$ or $j_1<r_1$, $j_1<j$,  $j_1>r_1-i$. By symmetry in pairs $i,j$ and $r_1,r_2$ it is enough
to consider the case $j\leq i$ and $r_1\leq r_2$.
In this case the number of solutions is $\min\{r_1,j\}-1$. Thus $P_{ij}(r_1,r_2)=\min\{i,j,r_1,r_2\}-1$.

By our definition
$$T_{i_1, i_2}\left(\begin{matrix} s_1 s_2 \\ 1 1\end{matrix}\right)= T_{ij}(s_1,s_2)= -\frac{1}{2}\Bigl (P_{ij}(s_1+s_2)+ \frac{1}{2}P_{ij}(s_1,s_2)\Bigr )=-\frac{1}{2}\min\{i,j,s_1,s_2\}$$ if $i+j= s_1+s_2$ and $T_{i_1, i_2}\left(\begin{matrix} s_1 s_2 \\ l_1 l_2 \end{matrix}\right)=0$ in other cases.

By Lemma-Definition \ref{ld2.3} we have
$$\tilde N_{\tiny{\left(\begin{matrix}
i_1 i_2 \\ \bar{i}_1 \bar{i}_{2}\end{matrix}\right)}}
\left(\begin{matrix} s_1 s_2 \\ r_1 r_2 \end{matrix}\right) = \frac{i_1 i_2\, \bar{i}_1\bar{i}_{2}}{s_1s_2}
\sum T_{i_1i_2}\left(\begin{matrix} s_1 \ \ s_2
\\ r_1\! -\! n_1+1  \ \ r_2\! -\! n_2+1\end{matrix}\right),$$
where the summation is carried over all partitions
of the set $\{\bar{i}_1,\bar{i}_2\}$ of type
$\left(\begin{matrix} s_1 s_2 \\ n_1 n_2 \end{matrix}\right)$. Such partitions exist only if either
$\bar{i}_1=s_1$, $\bar{i}_2=s_2$ or $\bar{i}_1=s_2$, $\bar{i}_2=s_1$.
Therefore, $$\tilde N_{\tiny{\left(\begin{matrix} i_1 i_2 \\ \bar{i}_1 \bar{i}_{2}\end{matrix}\right)}}
\left(\begin{matrix} \bar{i}_1 \bar{i}_2 \\ 1 1 \end{matrix}\right) = \tilde N_{\tiny{\left(\begin{matrix} i_1 i_2 \\ \bar{i}_1 \bar{i}_{2}\end{matrix}\right)}}
\left(\begin{matrix} \bar{i}_2 \bar{i}_1 \\ 1 1 \end{matrix}\right) = \frac{i_1 i_2\, \bar{i}_1\bar{i}_{2}}{\bar{i}_1 \bar{i}_2}
T_{i_1i_2}\left(\begin{matrix} \bar{i}_1 \ \ \bar{i}_2
\\ 1  \ \ 1\end{matrix}\right)= -\frac{i_1i_2}{2}\min\{i_1, i_2, \bar{i}_1,\bar{i}_{2}\})$$
and $0$ in other cases.
Finally we obtain $$ N_{[ i_1 i_2]|[\bar{i}_1 \bar{i}_{2}]}
\left(\begin{matrix} \bar{i}_1 \bar{i}_2 \\ 1 1 \end{matrix}\right) = N_{[ i_1 i_2]|[\bar{i}_1 \bar{i}_{2}]}
\left(\begin{matrix} \bar{i}_2 \bar{i}_1 \\ 1 1 \end{matrix}\right) =-\frac{i_1i_2}{2\sigma([ i_1, i_2])\sigma([ \bar{i}_1, \bar{i}_2])}(\min\{i_1, i_2, \bar{i}_1,\bar{i}_{2}\})$$
and $N_{[ i_1 i_2]|[\bar{i}_1 \bar{i}_{2}]}
\left(\begin{matrix} s_1 s_2 \\ r_1 r_2 \end{matrix}\right) = 0$ otherwise.
\end{proof}

\vspace{2ex}

\section{Double Hurwitz numbers}

\vspace{1ex}

\subsection{Generating function for double Hurwitz numbers of genus 0}

Given a meromorphic function $\varphi :
\Omega \rightarrow \hat{\mathbb{C}}$ of degree $d$ on
a connected Riemann surface $\Omega$, one can associate with each point $p\in\hat{\mathbb{C}}=\mathbb{C}\cup \{\infty \}$
a Young diagram $\Delta =
\Delta(\varphi , p)=[\mu_1 , \ldots , \mu_{\ell}]$ such that $|\Delta |=
\deg \, \varphi =d$ and
$\mu_i$ equals the degree of the map $\varphi$
at the point $p^i$ of the
complete pre-image $\varphi ^{-1}(p)=\{p^1, \ldots, p^\ell\}$.
Informally speaking, $\mu_i$ is the number of sheets of the covering
that are glued together at the point $p^i$ lying above $p$. We say that meromorphic functions $\varphi :
\Omega \rightarrow \hat{\mathbb{C}}$ and $\varphi' :
\Omega' \rightarrow \hat{\mathbb{C}}$ are equivalent if there exists a biholomorphic map $f:\Omega\rightarrow\Omega'$ such that $\varphi=\varphi'f'$.

Fix different points $p_1, \ldots , p_k$ and Young diagrams
$\Delta_1,\ldots , \Delta_k$ such that $|\Delta_i|=d$
for all $i=1, \ldots , k$. Consider the sum
$$
H(\Delta_1,\dots,\Delta_k)=\sum_{\varphi \in
\Cov_d(\Omega, \{\Delta_1, \ldots,\Delta_k\})} \frac{1}{|\Aut(\varphi )|}\,,
$$
where $|\Aut(\varphi )|$ is the order of the automorphism group
the covering
$\varphi$, $\Cov_d(\Omega, \{\Delta_1, \ldots, \Delta_k\})$ is the set of biholomorphic equivalence classes of meromorphic functions
$\varphi :\Omega\rightarrow \hat{\mathbb{C}}$
such that $\Delta(\varphi ,p_i)=\Delta_i$
and $\Delta(\varphi ,p)=(1^d)$ at the other points.
This sum does not depend on positions of the points $p_i$. It is called the \textit{(connected) Hurwitz number}. See \cite{Lando} for a review.

\vspace{1ex}

The Hurwitz numbers $$H_{d,l}(\Delta|\bar{\Delta}) =
H(\Delta, \bar{\Delta},\underbrace{(1^{d-2}2^1), \ldots ,
(1^{d-2}2^1)}_{l})$$ for functions of degree $d$ with arbitrary
ramification types $\Delta$, $\bar{\Delta}$ at two
fixed points and $l$ simple critical values
of the type $(1^{d-2}2^1)$ are called {\it double Hurwitz numbers}.
In fact the notation $H_{d,l}(\Delta|\bar{\Delta})$ is redundant
because this number is zero unless $d=|\Delta |=|\bar \Delta |$.
The genus $g$ of $\Omega$ and the number $l$
are connected by the Riemann-Hurwitz relation
$2g-2=l-\ell (\Delta)- \ell (\bar{\Delta})$.
Introduce two infinite system of variables
${\bf t}=\{t_1, t_2, \ldots \}$, ${\bf \bar t}=\{\bar t_1, \bar t_2, \ldots \}$,
as in the previous section, and consider the generating function
\beq\label{gen}
\Phi (\beta , Q, {\bf t}, {\bf \bar t})=
\sum_{l\geq 0}\frac{\beta ^{l}}{l!}\sum_{d\geq 1}Q^d \! \sum_{|\Delta| = |\bar\Delta|=d}H_{d,l}(\Delta, \bar\Delta)\prod_{i=1}^{\ell (\Delta )}\mu_i t_{\mu_i}\!\prod_{i=1}^{\ell (\bar \Delta )}\bar \mu_i \bar t_{\bar \mu_i}\,,
\eeq
where $\Delta=[\mu_1,\ldots,\mu_{\ell (\Delta)}]$,
$\bar{\Delta}=[\bar{\mu}_1,\ldots,\bar{\mu}_{\ell (\bar{\Delta})}]$.

In order to extract the contribution of genus $g$ surfaces the
following trick can be used (see, e.g., \cite{BEMS11} for the
case of ordinary Hurwitz numbers).
Let us rescale $\{t_k\}$, $\{\bar t_k\}$ and $\beta$ by introducing a new parameter $\hbar$ as $t_k \to t_k/\hbar$, $\beta \to \hbar \beta$ and consider the modified generating function
$
\Phi (\hbar ;\beta , Q, {\bf t}, {\bf \bar t}):=
\hbar^2 \Phi (\hbar \beta , Q, {\bf t}/\hbar , {\bf \bar t}/\hbar )
$,
then the series (\ref{gen}) having regard to the Riemann-Hurwitz
formula acquires the form of the topological expansion
\beq\label{topol}
\Phi (\hbar ;\beta , Q, {\bf t}, {\bf \bar t})=
\sum_{g\geq 0}\hbar^{2g} \Phi_{g}(\beta , Q, {\bf t}, {\bf \bar t}),
\eeq
where
\beq\label{LG202}
\Phi_{g}=\sum_{d\geq 1}
\! \sum_{|\Delta |=|\bar \Delta |=d}
\frac{Q^d \, \beta^{\ell (\Delta )\! +\!
\ell (\bar \Delta )\! +\! 2g\! -\! 2}}{(\ell (\Delta )\! +\!
\ell (\bar \Delta )\! +\! 2g\! -\! 2)!}
H_{d,\ell (\Delta )\! +\!
\ell (\bar \Delta )\! +\!
2g\! -\! 2}(\Delta , \bar \Delta )
\prod_{i=1}^{\ell (\Delta )}\mu_i t_{\mu_i}\!
\prod_{i=1}^{\ell (\bar \Delta )}\bar \mu_i \bar t_{\bar \mu_i}
\eeq
counts the connected coverings of genus $g$.
In particular,
\beq\label{LG203}
\Phi_{0}=\sum_{d\geq 1}
\sum_{|\Delta |=|\bar \Delta |=d}
\frac{Q^d \, H_{d,\ell (\Delta )\! +\!
\ell (\bar \Delta )\! -\! 2}(\Delta ,
\bar \Delta )}{\beta^{2}(\ell (\Delta )\! +\!
\ell (\bar \Delta )\!  -\! 2)!}\,
\prod_{i=1}^{\ell (\Delta )}(\beta \mu_i t_{\mu_i})\!
\prod_{i=1}^{\ell (\bar \Delta )}(\beta \bar \mu_i \bar t_{\bar \mu_i})
\eeq
is the generating function for the numbers of
the ramified coverings $\hat{\mathbb{C}}\longrightarrow
\hat{\mathbb{C}}$.

\vspace{1ex}

In \cite{Okounkov00} Okounkov has proved that the
(dispersionfull) tau-function
$$\tau_n ({\bf t}, {\bf\bar  t})=e^{\frac{1}{12}\,\beta n
(n+1)(2n+1)}Q^{\frac{1}{2}\,n(n+1)}
\exp \Bigl (\Phi (\beta , e^{\beta (n+\frac{1}{2})}
Q, {\bf t}, {\bf \bar t})\Bigr )
$$
solves the 2DTL hierarchy of Ueno and Takasaki \cite{UenoTakasaki}.
The dispersionless limit was characterized in terms of
string equations by Takasaki \cite{Takasaki12}.
The procedure of passing to the dispersionless limit \cite{TakTak}
is equivalent to extracting the contribution of genus $0$ surfaces.
Thus the function
\beq\label{FF}
\begin{array}{c}\displaystyle{
F(\beta , Q, t_0, {\bf t},{\bf \bar{t}})=
\frac{\beta t_0^3}{6} +\frac{t_0^2}{2} \log Q +\Phi_0
(\beta , Q e^{\beta t_0}, {\bf t}, {\bf \bar t}) }
\\ \\
\displaystyle{=\,
\frac{\beta t_0^3}{6} +\frac{t_0^2}{2} \log Q + \sum_{d\geq 1}
\sum_{|\Delta |=|\bar \Delta |=d}
\frac{(Q e^{\beta t_0})^d \, H_{d,\ell (\Delta )\! +\!
\ell (\bar \Delta )\! -\! 2}(\Delta , \bar \Delta )}{\beta^{2}(\ell (\Delta )\! +\!
\ell (\bar \Delta )\!  -\! 2)!}\,
\prod_{i=1}^{\ell (\Delta )}(\beta \mu_i t_{\mu_i})\!
\prod_{i=1}^{\ell (\bar \Delta )}(\beta \bar \mu_i \bar t_{\bar \mu_i})}
\end{array}
\eeq
satisfies the dispersionless 2DTL hierarchy from the previous section.
Moreover, from the structure of the series we see that it is a symmetric
solution.

\vspace{1ex}

\subsection{Formulas for double Hurwitz numbers}\label{ss3.2}

Now we can use Theorem \ref{t2.1} to calculate the genus 0 double Hurwitz numbers. Note that at genus 0 the Hurwitz number
$H_{d,l}(\Delta|\bar{\Delta})$ depends on the diagrams $\Delta , \bar \Delta$
only (the numbers $d,l$ are restored as $d=|\Delta |=|\bar \Delta |$,
$l= \ell (\Delta )+\ell (\bar \Delta )-2$). We will write
$H_0(\Delta|\bar{\Delta})=H_{d,l}(\Delta|\bar{\Delta})$.
It is obviously that $H_0([n]|[n'])=\frac{\delta_{n,n'}}{n}$

\begin{theorem}\label{t3.1} The genus 0 double Hurwitz number $H_0(\Delta|\bar{\Delta})$ for $\ell (\Delta)+
\ell (\bar{\Delta})\! > 2$ is given by
$$\label{hurwitz0}H_0(\Delta|\bar{\Delta})= \frac{(\ell (\Delta)+
\ell (\bar{\Delta})\! -\! 2)!} {\rho (\Delta)\, \rho (\bar{\Delta})}
 \sum s_1^{r_1}\ldots s_m^{r_m} \, N_{(\Delta|\bar{\Delta})}
\left(\begin{matrix} s_1 \ldots s_m \\ r_1\ldots r_m \end{matrix}\right)$$
where the sum is carried over all matrices
$\left(\begin{matrix} s_1 \ldots s_m \\ r_1\ldots r_m \end{matrix}\right)$ such that $s_1\! + \ldots +\! s_m= |\Delta|$ and $r_1\! +\! \ldots + \!r_m \! =\! \ell (\Delta)\! +\! \ell (\bar{\Delta})\! -\!2$
\end{theorem}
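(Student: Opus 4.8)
The plan is to extract Theorem~\ref{t3.1} directly from the two expressions we already have for the same symmetric solution: the combinatorial series (\ref{series}) of Theorem~\ref{t2.1} on the one hand, and the Hurwitz generating function (\ref{FF}) on the other. The key point is that both are symmetric formal solutions of the dispersionless 2DTL hierarchy, and by Theorem~\ref{t2.1} a symmetric solution is completely determined by its restriction $F|_{t_0}$, equivalently by the single function $f(t_0)=e^{F''(t_0)}$. So the strategy is: first identify $f(t_0)$ for the Hurwitz solution (\ref{FF}), then substitute it into the universal series (\ref{series}), and finally read off the coefficient of $t_\Delta\bar t_{\bar\Delta}$ and match it with the coefficient coming from the definition (\ref{FF}) of $\Phi_0$.

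First I would compute $F|_{t_0}$ for (\ref{FF}). Setting all $t_k=\bar t_k=0$ kills every term in the double sum (each such term contains at least one factor $t_{\mu_i}$ or $\bar t_{\bar\mu_i}$ since $\ell(\Delta),\ell(\bar\Delta)\ge 1$ whenever the diagrams are nonempty), so $F|_{t_0}(t_0)=\tfrac{\beta t_0^3}{6}+\tfrac{t_0^2}{2}\log Q$. Hence $F''(t_0)=\beta t_0+\log Q$ and $f(t_0)=e^{F''(t_0)}=Qe^{\beta t_0}$. Note in particular $\partial_0^r(f^s)=(s\beta)^r f^s=(s\beta)^s(Qe^{\beta t_0})^s$, which is the crucial elementary identity: differentiating $f^s$ with respect to $t_0$ just brings down a factor $s\beta$. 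Now plug $f=Qe^{\beta t_0}$ into (\ref{series}): the term indexed by $\Delta,\bar\Delta$ and a matrix $\left(\begin{smallmatrix}s_1\ldots s_m\\ r_1\ldots r_m\end{smallmatrix}\right)$ with $\sum s_i=|\Delta|=d$ becomes
$$
N_{(\Delta|\bar\Delta)}\!\left(\begin{matrix}s_1\ldots s_m\\ r_1\ldots r_m\end{matrix}\right)\beta^{r_1+\ldots+r_m}\,s_1^{r_1}\ldots s_m^{r_m}\,(Qe^{\beta t_0})^{s_1+\ldots+s_m}\, t_\Delta\bar t_{\bar\Delta}
= N_{(\Delta|\bar\Delta)}\!\left(\begin{matrix}s_1\ldots s_m\\ r_1\ldots r_m\end{matrix}\right)\beta^{m+\ell(\Delta)+\ell(\bar\Delta)-2}\,s_1^{r_1}\ldots s_m^{r_m}\,(Qe^{\beta t_0})^{d}\, t_\Delta\bar t_{\bar\Delta},
$$
using the constraint $\sum r_i=m+\ell(\Delta)+\ell(\bar\Delta)-2$. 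Summing over all admissible matrices, the total coefficient of $t_\Delta\bar t_{\bar\Delta}$ in the $f$-substituted series (\ref{series}) equals $(Qe^{\beta t_0})^{d}\,\beta^{\ell(\Delta)+\ell(\bar\Delta)-2}\sum \beta^{m}\,s_1^{r_1}\ldots s_m^{r_m}\,N_{(\Delta|\bar\Delta)}(\ldots)$ — wait, I must be careful to keep the powers of $\beta$ consistent; the clean statement is that the coefficient of $t_\Delta\bar t_{\bar\Delta}(Qe^{\beta t_0})^d$ is $\sum s_1^{r_1}\ldots s_m^{r_m}N_{(\Delta|\bar\Delta)}(\ldots)$ up to the explicit $\beta$ power $\beta^{m+\ell(\Delta)+\ell(\bar\Delta)-2}$, which I will track exactly in the writeup.

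On the other side, from (\ref{FF}) the coefficient of $t_\Delta\bar t_{\bar\Delta}$ — where $t_\Delta=\prod t_{\mu_i}$, $\bar t_{\bar\Delta}=\prod\bar t_{\bar\mu_i}$ — is read off the term with these exact diagrams, recalling that in (\ref{FF}) the monomial appears as $\prod_i(\beta\mu_i t_{\mu_i})\prod_i(\beta\bar\mu_i\bar t_{\bar\mu_i})$, contributing a factor $\beta^{\ell(\Delta)+\ell(\bar\Delta)}\rho(\Delta)\rho(\bar\Delta)$ and an overall $\beta^{-2}$, together with $H_0(\Delta|\bar\Delta)/(\ell(\Delta)+\ell(\bar\Delta)-2)!$ and $(Qe^{\beta t_0})^d$. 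Equating the two coefficients of $t_\Delta\bar t_{\bar\Delta}(Qe^{\beta t_0})^d$, the powers of $\beta$ cancel (both sides carry $\beta^{\ell(\Delta)+\ell(\bar\Delta)-2}$ overall, modulo the inner $\beta^m$ which I will show is absorbed correctly), and one gets
$$
\frac{H_0(\Delta|\bar\Delta)}{(\ell(\Delta)+\ell(\bar\Delta)-2)!}\,\rho(\Delta)\rho(\bar\Delta) = \sum s_1^{r_1}\ldots s_m^{r_m}\,N_{(\Delta|\bar\Delta)}\!\left(\begin{matrix}s_1\ldots s_m\\ r_1\ldots r_m\end{matrix}\right),
$$
which rearranges to the claimed formula. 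The main obstacle — really the only delicate point — is the bookkeeping: tracking the powers of $\beta$ and $Q$ consistently on both sides, making sure that the $t_\Delta$ versus $\prod\mu_i t_{\mu_i}$ normalizations and the sums over diagrams with repeated parts (where $\sigma(\Delta)$ would enter) are matched correctly, and confirming that the matrix-sum constraint $\sum r_i=m+\ell(\Delta)+\ell(\bar\Delta)-2$ is exactly what makes the $\beta$-powers agree term by term. Once the substitution $f=Qe^{\beta t_0}$ and the identity $\partial_0^r(f^s)=(s\beta)^r f^s$ are in place, the identification is a matter of comparing coefficients in two Taylor series that are known a priori to be equal.
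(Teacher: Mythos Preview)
Your approach is exactly the paper's: identify $f(t_0)$ for the Hurwitz solution, substitute into the universal expansion of Theorem~\ref{t2.1}, and match coefficients of $t_\Delta\bar t_{\bar\Delta}$ against~(\ref{FF}). The only difference is that the paper immediately specializes to $\beta=Q=1$, so that $f(t_0)=e^{t_0}$ and $\partial_0^{r}(f^{s})=s^{r}e^{st_0}$; the comparison of coefficients is then a one-line identity with no auxiliary parameters to track.

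Keeping $\beta,Q$ general is where your unresolved ``inner $\beta^m$'' comes from, and as you have set it up it is a genuine obstruction rather than mere bookkeeping. With the constraint $\sum r_i=m+\ell(\Delta)+\ell(\bar\Delta)-2$ as stated, the substitution $\partial_0^{r_i}(f^{s_i})=(\beta s_i)^{r_i}f^{s_i}$ produces a factor $\beta^{\,m+\ell(\Delta)+\ell(\bar\Delta)-2}$ on the right, whereas the coefficient extracted from~(\ref{FF}) carries only $\beta^{\,\ell(\Delta)+\ell(\bar\Delta)-2}$; the powers do \emph{not} agree term by term in $m$, so the promise that $\beta^m$ ``is absorbed correctly'' cannot be kept. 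The cleanest repair is simply to set $\beta=Q=1$ as the paper does, after which nothing is lost and the argument goes through verbatim. Alternatively, one can check from the proof of Lemma--Definition~\ref{ld2.3} that the correct constraint is actually $\sum r_i=\ell(\Delta)+\ell(\bar\Delta)-2$, independent of $m$ (as Lemmas~\ref{l3.1} and~\ref{l3.2} and the proof of Corollary~\ref{t3.3} confirm; the extra ``$m+$'' in the displayed constraint is a slip), and with this correction your $\beta$-powers do cancel cleanly.
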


\begin{proof} The function
$$F(1,1, t_0,{\bf t},{\bf \bar{t}})=
\frac{t_0^3}{6} +\sum_{|\Delta |=|\bar \Delta|}
\frac{e^{|\Delta | t_0} \,
H_0(\Delta , \bar \Delta )}{(\ell (\Delta)\! +
\!\ell (\bar \Delta )\!  -\! 2)!}\,\prod_{i=1}^{\ell (\Delta )} (\mu_i t_{\mu_i})\!\prod_{i=1}^{\ell (\bar \Delta )}(\bar \mu_i \bar t_{\bar \mu_i})$$ is a symmetric solution of the
dispersionless 2DTL hierarchy with $f(t_0)=e^{t_0}$.
Therefore, according to Theorem \ref{t2.1},
$$F(1,1, t_0, {\bf t},{\bf \bar{t}})=
\frac{t_0^3}{6}+\sum\limits_{|\Delta|=|\bar{\Delta}|}\!\!\! \!\!\!  \sum
\limits_{\tiny{\begin{matrix}  s_1+\ldots +s_m= |\Delta| \\
r_1\! +\! \ldots \! +\! r_m \!=\! \ell (\Delta)\! +\! \ell (\bar{\Delta})\! -\! 2\end{matrix}}}
\!\!\! \!\!\! \!\!\! \!\!\! \!\!\! \!\!\!
N_{(\Delta|\bar{\Delta})} \left(\begin{matrix} s_1\ldots s_m \\ r_1\ldots r_m \end{matrix}\right) \partial_0^{r_1}(e^{s_1 t_0})\ldots
\partial_0^{r_m}(e^{s_m t_0})\, t_{\Delta}\bar{t}_{\bar{\Delta}}
$$
Comparing this with the previous formula, we find that
$$\rho (\Delta)\rho (\bar{\Delta})
\frac{H_0(\Delta ,
\bar \Delta )}{(\ell (\Delta)\! +\!\ell
(\bar \Delta )\!  -\! 2)!} \,\, =\!\!\!\!\!\!
\sum\limits_{\tiny{\begin{matrix} s_1+\ldots +s_m= |\Delta|
\\ r_1+\ldots +r_m=\ell (\Delta)+\ell (\bar{\Delta})-2
\end{matrix}}} \!\!\! \!\!\!
N_{(\Delta|\bar{\Delta})} \left(\begin{matrix} s_1\ldots s_m \\ r_1\ldots r_m \end{matrix}\right) s_1^{r_1}\ldots s_m^{r_m}$$
which is the assertion of the theorem.
\end{proof}

{\bf Remark.}
Theorem \ref{t3.1} and the geometric definition of Hurwitz numbers give some nontrivial combinatorial relations. First, the sum
$\sum s_1^{r_1}\ldots s_m^{r_m} \, N_{(\Delta|\bar{\Delta})}
\left(\begin{matrix} s_1 \ldots s_m \\ r_1\ldots r_m \end{matrix}\right)$
must be positive despite the fact that the numbers $N_{(\Delta|\bar{\Delta})}\left(\begin{matrix} s_1 \ldots s_m \\ r_1\ldots r_m \end{matrix}\right)$ may have different signs.
Second, the equality $H_0(\Delta|\bar{\Delta})=H_0(\bar{\Delta}|\Delta)$
implies the identity
$$\sum s_1^{r_1}\ldots s_m^{r_m} \, N_{(\Delta|\bar{\Delta})} \left(\begin{matrix} s_1 \ldots s_m \\ r_1\ldots r_m \end{matrix}\right)=\sum s_1^{r_1}\ldots s_m^{r_m} \, N_{(\bar{\Delta}|\Delta)} \left(\begin{matrix} s_1 \ldots s_m \\ r_1\ldots r_m \end{matrix}\right)$$
which is non-trivial because all the definitions
depend on the order of the Young diagrams.

\vspace{1ex}

It follows from Theorem \ref{t3.1} that the formulas for  $N_{(\Delta|\bar{\Delta})}\left(\begin{matrix} s_1 \ldots s_m \\ r_1\ldots r_m \end{matrix}\right)$ give explicit expressions for the genus 0 double Hurwitz numbers. Let us consider some examples.

\vspace{1ex}

\begin{corollary}\label{t3.2} The number of polynomials of degree $n$ with a single  critical value at 0 of the type $\Delta$ is $$H_0(\Delta|[n]) = \frac{(\ell (\Delta)-1)!} {\sigma (\Delta)} \, n^{\ell (\Delta)-2}+\frac{\delta_{\ell (\Delta),1}}{n}.$$
\end{corollary}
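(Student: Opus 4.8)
The plan is to specialize Theorem~\ref{t3.1} to the case $\bar{\Delta} = [n]$, a Young diagram with a single row, and then evaluate the resulting sum over matrices $\left(\begin{matrix} s_1 \ldots s_m \\ r_1\ldots r_m \end{matrix}\right)$ using Lemma~\ref{l3.1}. Since $[n]$ has $\ell(\bar\Delta) = 1$, $\rho(\bar\Delta) = n$, $\sigma(\bar\Delta) = 1$, and $|\bar\Delta| = n = |\Delta|$, the constraint $s_1 + \ldots + s_m = |\Delta| = n$ together with $r_1 + \ldots + r_m = m + \ell(\Delta) + \ell(\bar\Delta) - 2 = m + \ell(\Delta) - 1$ still leaves many matrices in play; however, the key observation is that Lemma~\ref{l3.1} tells us $N_{(\Delta|\bar\Delta)}\left(\begin{smallmatrix} s_1 \ldots s_m \\ r_1\ldots r_m\end{smallmatrix}\right)$ vanishes unless the matrix is the single-column matrix $\left(\begin{smallmatrix} |\Delta| \\ \ell(\Delta) + \ell(\bar\Delta) - 2\end{smallmatrix}\right) = \left(\begin{smallmatrix} n \\ \ell(\Delta) - 1 \end{smallmatrix}\right)$.

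Wait --- that vanishing statement in Lemma~\ref{l3.1} is about $N$ with a single $s$, not about all multi-column matrices, so I should be careful: Lemma~\ref{l3.1} asserts the value of $N_{(\Delta|\bar\Delta)}\left(\begin{smallmatrix} s \\ r \end{smallmatrix}\right)$ (one column) and says it is zero for $\binom{s}{r} \neq \binom{|\Delta|}{\ell(\Delta)+\ell(\bar\Delta)-2}$, but does not directly address $m \geq 2$. So the honest route is: first I would argue that for $\bar\Delta = [n]$ only $m = 1$ contributes. This follows by tracing through the definition of $\tilde N$ in Lemma-Definition~\ref{ld2.3}: a partition of the singleton set $\{\bar i_1\} = \{n\}$ of type $\left(\begin{smallmatrix} s_1 \ldots s_m \\ n_1 \ldots n_m\end{smallmatrix}\right)$ forces $m = 1$, $n_1 = 1$, $s_1 = n$. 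Hence only the single-column term survives, and by Lemma~\ref{l3.1} its coefficient is $N_{(\Delta|[n])}\left(\begin{smallmatrix} n \\ \ell(\Delta) - 1\end{smallmatrix}\right) = \dfrac{\rho(\Delta)\, n}{n\, \sigma(\Delta)\cdot 1} = \dfrac{\rho(\Delta)}{\sigma(\Delta)}$.

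Then I would plug this into the formula of Theorem~\ref{t3.1}. The only surviving matrix has $s_1 = n$ and $r_1 = \ell(\Delta) + \ell(\bar\Delta) - 2 = \ell(\Delta) - 1$, so $s_1^{r_1} = n^{\ell(\Delta) - 1}$, giving
$$
H_0(\Delta|[n]) = \frac{(\ell(\Delta) + \ell([n]) - 2)!}{\rho(\Delta)\,\rho([n])}\, n^{\ell(\Delta)-1}\cdot \frac{\rho(\Delta)}{\sigma(\Delta)} = \frac{(\ell(\Delta) - 1)!}{n\,\sigma(\Delta)}\, n^{\ell(\Delta)-1} = \frac{(\ell(\Delta)-1)!}{\sigma(\Delta)}\, n^{\ell(\Delta)-2},
$$
which is exactly the claimed formula. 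Finally, I would remark that polynomials of degree $n$ are precisely the rational functions whose only preimage of $\infty$ is a single point with full ramification, i.e. whose ramification profile over $\infty$ is $[n]$, so $H_0(\Delta|[n])$ indeed counts the polynomials described in the statement (this is the identification already used implicitly in the introduction, matching the cited result $H_0([n]|\Delta) = \frac{(m-1)!}{\sigma(\Delta)} n^{m-2}$ from \cite{Lando-Zvonkine [43 Lando]} up to the $\Delta \leftrightarrow \bar\Delta$ symmetry noted after Theorem~\ref{t3.1}).

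The main obstacle is the bookkeeping in the first step: one must be confident that no multi-column matrix contributes, which requires reading off from Lemma-Definition~\ref{ld2.3} that partitioning a one-element set forces $m=1$. Everything after that is a direct substitution, so the proof is short; the only care needed is to correctly track the factors $\rho$, $\sigma$, and the factorial coming from the Riemann-Hurwitz count.
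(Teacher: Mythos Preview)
Your proposal is correct and follows essentially the same route as the paper: the paper's proof asserts that $N_{(\Delta|[n])}\left(\begin{smallmatrix} s_1\ldots s_m \\ r_1\ldots r_m \end{smallmatrix}\right)=0$ for $m>1$ ``from our definition'' and then applies Theorem~\ref{t3.1} together with Lemma~\ref{l3.1}, which is exactly your argument. Your justification of the vanishing for $m>1$ via the partition constraint in Lemma-Definition~\ref{ld2.3} (a singleton set admits only the trivial partition) is precisely what the paper leaves implicit, so your write-up is in fact slightly more explicit than the original.
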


\begin{proof} It follows from our definition that  $N_{(\Delta|[n])} \left(\begin{matrix} r_1\ldots r_m \\ n_1\ldots n_m \end{matrix} \right)=0$ for $m>1$. Hence, according to Theorem \ref{t3.1} and Lemma \ref{l3.1},
$$
H_0(\Delta|[n])=
\frac{(\ell (\Delta)-1)!}{\rho (\Delta)\rho ([n])} \, |\Delta|^{\ell (\Delta)-1}\frac{\rho (\Delta)
\rho ([n])}{|\Delta|\sigma (\Delta)}=\frac{(\ell (\Delta)-1)!} {\sigma (\Delta)} \, n^{\ell (\Delta)-2}.
$$
\end{proof}

\vspace{1ex}

\begin{corollary}\label{t3.3} Let $i_1+i_2=\bar{i}_1+\bar{i}_2=d$. Then
$$H_0([i_1,i_2]|[\bar{i}_1,\bar{i}_2])=2\, \frac{d
- \min\{i_1, i_2, \bar{i}_1,\bar{i}_{2}\}}{(1+\delta_{i_1i_2}) (1+\delta_{\bar{i}_1\bar{i}_2})}$$
\end{corollary}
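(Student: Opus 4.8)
The plan is to read off $H_0([i_1,i_2]\,|\,[\bar i_1,\bar i_2])$ from Theorem~\ref{t3.1}, evaluating the (very short) sum with the explicit coefficients of Lemmas~\ref{l3.1} and~\ref{l3.2}. Both diagrams $\Delta=[i_1,i_2]$ and $\bar\Delta=[\bar i_1,\bar i_2]$ have exactly two rows, so $\ell(\Delta)=\ell(\bar\Delta)=2$, $\rho(\Delta)=i_1i_2$, $\rho(\bar\Delta)=\bar i_1\bar i_2$, and $(\ell(\Delta)+\ell(\bar\Delta)-2)!=2$; thus Theorem~\ref{t3.1} reads
\[
H_0([i_1,i_2]\,|\,[\bar i_1,\bar i_2])=\frac{2}{i_1i_2\,\bar i_1\bar i_2}\ \sum\, s_1^{r_1}\ldots s_m^{r_m}\,N_{(\Delta|\bar\Delta)}\!\left(\begin{matrix}s_1\ldots s_m\\ r_1\ldots r_m\end{matrix}\right),
\]
the sum being over the matrices allowed there. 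First I would note that only $m=1$ and $m=2$ can contribute: by Lemma-Definition~\ref{ld2.3} the coefficient $N_{(\Delta|\bar\Delta)}$ with $m$ columns is assembled from partitions of the two parts of $\bar\Delta$ into $m$ non-empty blocks, which is impossible once $m\ge 3$.

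For $m=1$, Lemma~\ref{l3.1} gives a single non-zero coefficient, $N_{(\Delta|\bar\Delta)}\!\left(\begin{matrix}d\\ 2\end{matrix}\right)=\dfrac{\rho(\Delta)\rho(\bar\Delta)}{d\,\sigma(\Delta)\sigma(\bar\Delta)}$, carrying the monomial $s_1^{r_1}=d^{\,2}$; it contributes $\dfrac{d\,\rho(\Delta)\rho(\bar\Delta)}{\sigma(\Delta)\sigma(\bar\Delta)}$. For $m=2$, Lemma~\ref{l3.2} says the only non-zero coefficients have $r_1=r_2=1$ and $\{s_1,s_2\}=\{\bar i_1,\bar i_2\}$ (so $s_1s_2=\bar i_1\bar i_2=\rho(\bar\Delta)$), with value $-\dfrac{i_1i_2\,\min\{i_1,i_2,\bar i_1,\bar i_2\}}{2\,\sigma(\Delta)\sigma(\bar\Delta)}$. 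Summing over the ordered pairs $(s_1,s_2)$ realizing $\{\bar i_1,\bar i_2\}$, I expect the $m=2$ block to contribute $-\dfrac{\rho(\Delta)\rho(\bar\Delta)\,\min\{i_1,i_2,\bar i_1,\bar i_2\}}{\sigma(\Delta)\sigma(\bar\Delta)}$.

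Adding the two pieces collapses the sum to $\dfrac{\rho(\Delta)\rho(\bar\Delta)}{\sigma(\Delta)\sigma(\bar\Delta)}\bigl(d-\min\{i_1,i_2,\bar i_1,\bar i_2\}\bigr)$, and cancelling $\rho(\Delta)\rho(\bar\Delta)=i_1i_2\,\bar i_1\bar i_2$ against the prefactor gives
\[
H_0([i_1,i_2]\,|\,[\bar i_1,\bar i_2])=\frac{2\bigl(d-\min\{i_1,i_2,\bar i_1,\bar i_2\}\bigr)}{\sigma([i_1,i_2])\,\sigma([\bar i_1,\bar i_2])}.
\]
Since $\sigma([a,b])=1+\delta_{ab}$, this is the asserted formula.

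The step I expect to be the main obstacle is not the algebra but the symmetry-factor bookkeeping in the degenerate cases $i_1=i_2$ and/or $\bar i_1=\bar i_2$. There the number of distinct matrices $\left(\begin{matrix}s_1\,s_2\\ r_1\,r_2\end{matrix}\right)$ occurring in the sum drops, and to see that the $m=2$ contribution is still $-\rho(\Delta)\rho(\bar\Delta)\min\{\dots\}/\bigl(\sigma(\Delta)\sigma(\bar\Delta)\bigr)$ one must go back to Lemma-Definition~\ref{ld2.3} — where the two orderings of a repeated pair of parts of $\bar\Delta$ contribute a compensating factor $2$ — rather than apply the formula of Lemma~\ref{l3.2} verbatim. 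This is precisely the mechanism producing the Kronecker deltas in the denominator; once it is handled, the rest is the one-line computation above.
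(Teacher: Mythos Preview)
Your approach is exactly the one the paper takes: apply Theorem~\ref{t3.1}, observe that only $m=1,2$ contribute, and read off the two pieces from Lemmas~\ref{l3.1} and~\ref{l3.2}. Your extra care with the degenerate cases $i_1=i_2$ or $\bar i_1=\bar i_2$ is well placed---the paper glosses over this (and in fact mixes $N$ and $\tilde N$ in the $m=2$ term), but as you note, the factor~$2$ from the two ordered partitions of a repeated pair in Lemma-Definition~\ref{ld2.3} exactly cancels $\sigma(\bar\Delta)=2$, so the contribution $-\rho(\Delta)\rho(\bar\Delta)\min\{\dots\}/(\sigma(\Delta)\sigma(\bar\Delta))$ is uniformly correct.
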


\begin{proof} Put $\Delta=[i_1,i_2]$, $\bar{\Delta}= [\bar{i}_1,\bar{i}_2]$.
It follows from our definition that  $N_{(\Delta|\bar{\Delta})} \left(\begin{matrix} r_1\ldots r_m \\ n_1\ldots n_m \end{matrix} \right)=0$ for $m>2$. According to Theorem \ref{t3.1} and Lemmas \ref{l3.1}, \ref{l3.2}
we thus have:
$$H_0(\Delta|\bar{\Delta})= \frac{(\ell (\Delta)+
\ell (\bar{\Delta})\! -\! 2)!} {\rho (\Delta)\, \rho (\bar{\Delta})}
 \sum s_1^{r_1}\ldots s_m^{r_m} \, N_{(\Delta|\bar{\Delta})}
\left(\begin{matrix} s_1 \ldots s_m \\ r_1\ldots r_m \end{matrix}\right)$$

$$=\frac{(\ell (\Delta)+\ell (\bar{\Delta})\! -\! 2)!} {\rho (\Delta)\, \rho (\bar{\Delta})}
\left( |\Delta|^{\ell(\Delta)+\ell(\bar{\Delta})-2 } N_{(\Delta|\bar{\Delta})}\left(\begin{matrix} |\Delta| \\ \ell(\Delta)+
\ell(\bar{\Delta})-2 \end{matrix}\right)+ 2\bar{i}_1\bar{i}_2\, \tilde N_{\tiny{\left(\begin{matrix} i_1 i_2 \\ \bar{i}_1 \bar{i}_{2}\end{matrix}\right)}}
\left(\begin{matrix} \bar{i}_1 \bar{i}_2 \\ 1 1 \end{matrix}\right)\right) $$
$$=\frac{(\ell (\Delta)+\ell (\bar{\Delta})\! -\! 2)!} {\rho (\Delta)\, \rho (\bar{\Delta})}
\left( |\Delta|^{\ell(\Delta)+\ell(\bar{\Delta})-2 }
\frac{\rho(\Delta) \rho(\bar{\Delta})} {|\Delta|\sigma(\Delta)\sigma(\bar{\Delta})}- \frac{i_1i_2\bar{i}_1\bar{i}_2}{\sigma(\Delta)\sigma(\bar{\Delta})} \min\{i_1, i_2, \bar{i}_1,\bar{i}_{2}\}\right)  $$
$$=\frac{(\ell (\Delta)+\ell (\bar{\Delta})\! -\! 2)!} {\sigma(\Delta)\sigma(\bar{\Delta})}
\left( |\Delta|^{\ell(\Delta)+\ell(\bar{\Delta})-3 }
- \min\{i_1, i_2, \bar{i}_1,\bar{i}_{2}\}\right) =2\,
\frac{|\Delta|
- \min\{i_1, i_2, \bar{i}_1,\bar{i}_{2}\}} {\sigma(\Delta)
\sigma(\bar{\Delta})}$$
\end{proof}

\vspace{2ex}

\section{Conformal dynamics}

\vspace{1ex}

The result of section 2 takes on a geometric significance when
the Toda times $t_k$, $\bar t_k$ are identified with (complex conjugate)
moments of simply-connected
domains in the complex plane with smooth boundary.
In this case the dispersionless
2DTL dynamics encodes the shape dependence
of the conformal mapping of such a domain to some
fixed reference domain, thus providing an
effectivization of the Riemann mapping theorem.
We call it the {\it conformal dynamics}.

\subsection{Local coordinates in the space of simply-connected domains}

Let ${\sf D}\subset \mathbb{C}$ be a compact simply-connected domain whose boundary is a smooth curve $\gamma=\partial {\sf D}$ and let
${\sf D^c}=\hat{\mathbb{C}} \setminus {\sf D}$ be its complement in the
Riemann sphere $\hat{\mathbb{C}}$.
Without loss of generality, it is convenient to
assume that ${\sf D}\ni 0$ (and so ${\sf D^c}\ni \infty$).

\vspace{1ex}

Fix a real-analytic and real-valued function $U(z, \bar z)$
in $\mathbb{C}^*=\mathbb{C}\setminus \{0\}$ such that
$$\sigma (z, \bar z):=\partial\bar\partial U(z, \bar z)>0.$$
We introduce the set of moments of the
domain ${\sf D^c}$ as follows:
$$t_k=\frac{1}{2\pi ik}\oint_{\gamma} z^{-k}\partial U (z, \bar z) \, dz =-\, \frac{1}{\pi k}\int \!\!\! \int_{{\sf D^c}}\! z^{-k}
\sigma  (z, \bar z) \, d^2 z  \,,\quad k\geq 1,
$$
where $d^2z \equiv dx \, dy$. In general they are complex numbers.
We claim that together with the real parameter
$$t_0 = \frac{1}{2\pi i}\oint_{\gamma}\partial U (z, \bar z) \, dz
=\frac{1}{\pi }\int \!\!\! \int_{{\sf D}}\!
\sigma  (z, \bar z) \,  d^2 z
$$
they form a set of local coordinates in the space $\mathcal{H}$ of such domains ${\sf D}$ (or ${\sf D^c}$).
The function $\sigma$ plays the role of a
background charge density in the complex plane.

\vspace{1ex}

First we prove the local uniqueness of a domain with
given moments \cite{EV,KMZ05}.

\begin{proposition}\label{local-uniqueness}
Any one-parameter deformation ${\sf D}(t)$
of ${\sf D}={\sf D}(0)$ with some real parameter $t$
such that all $t_k$ are preserved, $\p_t t_k=0$, $k\geq 0$, is trivial.
\end{proposition}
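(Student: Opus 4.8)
The plan is to show that the variations $\partial_t t_k = 0$ for all $k \geq 0$ force $\partial_t {\sf D} = 0$ by deriving an overdetermined system for the normal velocity of the boundary curve $\gamma$ under the deformation. Write $\gamma = \partial {\sf D}(t)$ and let $v_n(z)$, $z \in \gamma$, denote the normal component of the velocity of the boundary (with respect to arclength), so that for any smooth function $h$ one has the standard formula $\partial_t \int\!\!\int_{{\sf D}} h\, d^2z = \oint_\gamma h\, v_n\, |dz|$. Applying this to the area-type integrals defining the moments, with $h = z^{-k}\sigma(z,\bar z)$ for $k \geq 1$ and $h = \sigma(z,\bar z)$ for $k = 0$, the conditions $\partial_t t_k = 0$ become
$$
\oint_\gamma z^{-k}\, \sigma(z,\bar z)\, v_n(z)\, |dz| = 0 \quad \text{for all } k \geq 0,
$$
and by taking complex conjugates (using that $t_k$ real for $k=0$ and that the $\bar t_k$ are the conjugate moments, preserved as well — or simply that $v_n$ is real) we also get the analogous vanishing with $\bar z^{-k}$ in place of $z^{-k}$ for $k \geq 1$.

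First I would reinterpret these as the vanishing of all moments of the positive measure $d\nu := \sigma\, v_n\, |dz|$ on $\gamma$ against the functions $\{z^{-k}\}_{k\geq 0}$ and $\{\bar z^{-k}\}_{k\geq 1}$. The key step is a density/completeness argument: since $\gamma$ is a smooth Jordan curve and $0 \in {\sf D}$, the functions $z^{-k}$ ($k \geq 0$) together with $\bar z^{-k}$ ($k \geq 1$) are dense (in the uniform norm on $\gamma$) in the space of functions on $\gamma$ — this is a consequence of the fact that $z$ maps $\gamma$ onto a Jordan curve and one can approximate, via a Laurent/Fourier-type expansion relative to a uniformizing coordinate, any continuous function on $\gamma$; equivalently, the real and imaginary parts of $\{z^k, z^{-k}\}$ separate points and form an algebra dense by Stone–Weierstrass once one notes $\bar z$ on $\gamma$ is a limit of rational functions of $z$ with poles in ${\sf D}$. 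Hence $\oint_\gamma \phi\, d\nu = 0$ for every continuous $\phi$, so $d\nu \equiv 0$. Since $\sigma > 0$ on $\mathbb{C}^*$ and $\gamma \subset \mathbb{C}^*$, this forces $v_n \equiv 0$ on $\gamma$, i.e. the deformation is trivial.

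The main obstacle is making the density statement precise and correct: one must be careful that it is not polynomials in $z$ alone but the full system including negative powers and the antiholomorphic powers $\bar z^{-k}$ that is needed, and that this system really does span a dense subspace of $C(\gamma)$. The cleanest route is to use that on $\gamma$ the Schwarz function $S(z)$ (with $\bar z = S(z)$ on $\gamma$) is real-analytic, and that the algebra generated by $z, z^{-1}$ is uniformly dense in the algebra generated by $z, \bar z$ restricted to $\gamma$; then invoke Stone–Weierstrass. Alternatively — and this is presumably the intended argument, matching the cited references \cite{EV,KMZ05} — one introduces the potential: the constraints say that the harmonic continuation of $\partial_t U$ has vanishing singular part, forcing $\partial_t U$ to extend harmonically across $\gamma$, which combined with the defining relation $\sigma = \partial\bar\partial U$ pins down the deformation. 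Either way, once completeness is in hand the conclusion $v_n \equiv 0$ is immediate and the proof is complete.
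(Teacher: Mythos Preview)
Your overall strategy---reduce $\partial_t t_k=0$ to the vanishing of all moments $\oint_\gamma z^{-k}\sigma v_n\,|dz|$ of the real measure $d\nu=\sigma v_n\,|dz|$, then conclude $v_n\equiv 0$---is sound, and the density claim you need (that the linear span of $\{z^{-k},\bar z^{-k}\}_{k\ge 0}$ is dense in $C(\gamma)$) is in fact true. But the justification you give for it is the gap. Stone--Weierstrass yields density of the \emph{algebra} generated by $z^{-1},\bar z^{-1}$, which contains cross terms like $z^{-j}\bar z^{-k}$; you only have orthogonality against the \emph{linear} span, so that is not enough. Your fallback claim that ``$\bar z$ on $\gamma$ is a limit of rational functions of $z$ with poles in ${\sf D}$'' is also false in general: such rational functions lie in the closed subalgebra $A({\sf D^c})|_\gamma$ of boundary values of functions holomorphic in ${\sf D^c}$, and for any non-circular $\gamma$ (say an ellipse) the Schwarz function $S(z)=\bar z|_\gamma$ has a pole at $\infty$, hence is not in that algebra. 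A correct proof of the density runs instead through the Dirichlet problem: for smooth real $\phi$ on $\gamma$, harmonically extend to ${\sf D^c}$, write the extension as $\mathrm{Re}\,f$ with $f$ holomorphic and continuous to the boundary (using simple connectivity and boundary regularity), and then approximate $f$ uniformly by polynomials in $1/z$ via Mergelyan/Runge. That works but is heavier machinery than you suggest.

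The paper's proof takes a more direct complex-analytic route that bypasses the density question entirely. It differentiates the Cauchy integral $C(z)=\frac{1}{2\pi i}\oint_\gamma \frac{U_\zeta\,d\zeta}{\zeta-z}$ in $t$ and computes that the jump $(\partial_t C^+-\partial_t C^-)\,d\zeta$ across $\gamma$ equals $\sigma\,(\partial_t\bar\zeta\,d\zeta-\partial_t\zeta\,d\bar\zeta)$, which is purely imaginary. The moment conditions for $k\ge 1$ force $\partial_t C^+\equiv 0$ in ${\sf D}$, so this purely imaginary differential is the boundary value of the holomorphic differential $-\partial_t C^-(z)\,dz$ in ${\sf D^c}$; the $k=0$ condition kills the residue at $\infty$. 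Then one invokes the elementary fact (a one-line consequence of the maximum principle) that a holomorphic differential on a simply connected domain whose restriction to the boundary is purely imaginary must vanish identically. This gives $\partial_t\bar\zeta\,d\zeta-\partial_t\zeta\,d\bar\zeta=0$, i.e.\ the velocity is tangential. The advantage is that it uses only the maximum principle rather than Mergelyan and Dirichlet regularity; it is worth comparing the two arguments, since the holomorphic-differential step is morally the dual statement to your density claim.
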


\begin{proof}
The proof is a modification
of the one presented in \cite{KMZ05} for the case $\sigma (z, \bar z)=1$.
It is based on the following two facts.
\begin{itemize}
\item
{\it The difference of the boundary values $\p_t C^{\pm}(\zeta )d\zeta$
of the $t$-derivative of the Cauchy integral
$$
C(z)dz = \frac{dz}{2\pi i}\oint_{\gamma}
\frac{U_{\zeta}(\zeta , \bar \zeta )\, d\zeta}{\zeta -z}\,,
\quad \quad U_{\zeta}(\zeta , \bar \zeta )\equiv
\p_{\zeta}U (\zeta , \bar \zeta ),
$$
is a purely imaginary differential on $\gamma$.}
Indeed, let $\zeta (\theta , t)$ be a parametrization of the curve
$\gamma (t)$. Taking the $t$-derivative of the Cauchy integral
and integrating by parts, one gets
$$
\begin{array}{lll}
\p_t C(z)dz &=&\displaystyle{\frac{dz}{2\pi i}\oint_{\gamma} \left (
\frac{U_{\zeta \zeta}\zeta_t \bar \zeta_{\theta}+
U_{\zeta \bar \zeta}\bar \zeta_t \zeta_{\theta}+
U_{\zeta}\zeta_{t\theta}}{\zeta \, - \, \theta}-
\frac{U_{\zeta}\zeta_t \zeta_{\theta}}{(\zeta -z)^2}\right )
d\theta}
\\ &&\\
&=&\displaystyle{\frac{dz}{2\pi i}\oint_{\gamma}
\frac{\bar \zeta_t \zeta_{\theta} -\zeta_t \bar \zeta_{\theta}}{\zeta -z}
\, \sigma \bigl (\zeta (\theta , t), \bar \zeta (\theta , t)\bigr )d\theta}
\end{array}
$$
Hence, $\Bigl (\p_t C^+(\zeta )-\p_t C^-(\zeta )\Bigr )d\zeta =
\sigma (\zeta , \bar \zeta )(\p_t \bar \zeta d\zeta - \p_t \zeta
d\bar \zeta )=2i \sigma \, {\rm Im} \Bigl ( \p_t \bar \zeta d\zeta \Bigr )$
is indeed purely imaginary.
\item
{\it If a $t$-deformation preserves all the moments $t_k$,
$k\geq 0$, then the differential $\sigma (\zeta , \bar \zeta )
(\p_t \bar \zeta d\zeta - \p_t \zeta
d\bar \zeta )$ extends to a holomorphic differential in
${\sf D^c}$.} Take a small neighbourhood ${\sf B}$
of $0\in {\sf D}$ such that
$|z|<|\zeta |$ for all $z\in {\sf B}$ and $\zeta \in \gamma$, then
for $z\in {\sf B}$ we can expand:
$$
\p_t C^{+}(z)dz=\frac{\p}{\p t}\left (
\frac{dz}{2\pi i}\sum_{k\geq 0} z^k \oint_{\gamma}
\zeta^{-k-1}U_{\zeta}(\zeta , \bar \zeta )\, d\zeta \right )=
\sum_{k\geq 1}k (\p_t t_k) z^{k-1}dz=0
$$
and, since $C^+$ is analytic in ${\sf D}$, we conclude that
$\p_t C^+ \equiv 0$. Therefore, the differential
$\sigma (\zeta , \bar \zeta )
(\p_t \bar \zeta d\zeta - \p_t \zeta
d\bar \zeta )$ is the boundary value of the differential
$-\p_t C^-(z)dz$ which has at most simple pole at $\infty$ and
holomorphic everywhere else in ${\sf D^c}$. The equality
$$
\p_t t_0=\frac{1}{2\pi i}\oint_{\gamma}
\sigma (\zeta , \bar \zeta )
(\p_t \bar \zeta d\zeta - \p_t \zeta
d\bar \zeta )=0
$$
then implies that the residue at $\infty$ vanishes, so
$\p_t C^-(z)dz$ is a holomorphic differential.
\end{itemize}
Any holomorphic differential which is purely imaginary along the
boundary of a simply-connected domain must identically vanish in this domain.
Hence we conclude that $\p_t \bar \zeta d\zeta - \p_t \zeta
d\bar \zeta =0$ which means that the vector $\p_t \zeta$ is tangent
to the boundary and hence the deformation is trivial.
\end{proof}

In fact the assertion of the Proposition remains true under
less restrictive conditions on the function $\sigma$: it is enough
to require that $\sigma \neq 0$ in some strip-like neighbourhood
of the contour $\gamma$.

\vspace{1ex}

The fact that the set of the moments is not overcomplete follows
from the explicit construction of vector fields in the space of
domains that change real or imaginary part of any moment keeping
all the others fixed (see below). These arguments allow one to prove
the following theorem.

\begin{theorem}\label{local-coordinates}
The real parameters $t_0$, ${\rm Re}\, t_k$, ${\rm Im}\, t_k$,
$k\geq 1$, form a set of local coordinates in the space of simply-connected
plane domains with smooth boundary.
\end{theorem}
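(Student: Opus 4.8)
The plan is to read Theorem~\ref{local-coordinates} as the assertion that the \emph{moment map} $\Phi\colon \mathcal{H}\to \mathbb{R}\times\{\text{sequences}\}$, $\Phi({\sf D})=(t_0,t_1,t_2,\ldots)$, is a local diffeomorphism onto its image; once this is shown, $t_0$, ${\rm Re}\,t_k$, ${\rm Im}\,t_k$ are a chart by definition. A tangent vector to $\mathcal{H}$ at ${\sf D}$ is a one-parameter deformation, encoded by the normal velocity $v(\zeta)$ of the boundary, a real-valued function on $\gamma$. First I would compute $d\Phi$: differentiating the area representations of the moments one gets
$$
\partial_t t_0=\frac{1}{\pi}\oint_{\gamma}\sigma(\zeta,\bar\zeta)\,v(\zeta)\,|d\zeta|,\qquad
\partial_t t_k=\frac{1}{\pi k}\oint_{\gamma}\zeta^{-k}\,\sigma(\zeta,\bar\zeta)\,v(\zeta)\,|d\zeta|\quad(k\geq 1),
$$
so $d\Phi$ sends $v$ to the family of moments of the real density $\sigma(\zeta,\bar\zeta)\,v(\zeta)\,|d\zeta|$ on $\gamma$ against $1,\zeta^{-1},\zeta^{-2},\ldots$. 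Injectivity of $d\Phi$ is then exactly Proposition~\ref{local-uniqueness}: a $v$ in the kernel generates a deformation preserving every $t_k$, and the argument given there (the differential $\sigma(\zeta,\bar\zeta)(\partial_t\bar\zeta\,d\zeta-\partial_t\zeta\,d\bar\zeta)$ extends holomorphically to ${\sf D^c}$ while being purely imaginary along $\gamma$, hence vanishes) forces $v\equiv 0$.

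For surjectivity I would produce, for each moment, a deformation field changing only that one. Pull back to the unit circle by the Riemann map $w\colon{\sf D^c}\to\{|w|>1\}$, $z=z(w)=rw+u_0+u_1w^{-1}+\cdots$ with $r>0$; by the Riemann mapping theorem this identifies $\mathcal{H}$ with the space of such univalent maps, and the real parameters $r,{\rm Re}\,u_j,{\rm Im}\,u_j$ match the moments $t_0,{\rm Re}\,t_k,{\rm Im}\,t_k$ level by level. Under this substitution the restriction of $\zeta^{-k}$ ($k\geq1$) to $\gamma$ becomes the boundary value of a function holomorphic and vanishing at $\infty$ in $\{|w|>1\}$, i.e.\ a Laurent tail $\sum_{j\geq k}c^{(k)}_{j}w^{-j}$ with $c^{(k)}_{k}=r^{-k}\neq0$; hence the span of $\{\zeta^{-k}\}_{k\geq1}$ together with the constant $1$ (the functional attached to $t_0$) equals, after this triangular change, the span of the Fourier modes $\{e^{-ij\theta}\}_{j\geq0}$. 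Writing $g(\theta)$ for the function obtained from $\sigma v$ by the pull-back, including the Jacobian $|z'(e^{i\theta})|$, one has that $g$ is again real (as $\sigma$ is real, $v$ real, $|z'|>0$), that a real function on the circle is uniquely determined by its Fourier coefficients of non-negative index, and that these are precisely what $d\Phi$ prescribes, with $\widehat g(0)$ forced real in agreement with $t_0$ being real. Solving this triangular linear system for $g$ and dividing by the positive non-vanishing factor recovers a real $v$ realizing any prescribed $(\partial_t t_0,\partial_t t_1,\ldots)$ in the appropriate decay class; in particular it yields the explicit fields $\partial/\partial t_0$, $\partial/\partial{\rm Re}\,t_k$, $\partial/\partial{\rm Im}\,t_k$, which, being preimages of the standard directions under the injective $d\Phi$, are automatically linearly independent.

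With $d\Phi$ a linear isomorphism of the relevant completed tangent spaces at every point, I would conclude by the inverse function theorem: either in the Banach-manifold model of $\mathcal{H}$ built from H\"older (or $C^\infty$) perturbations of $\gamma$, or, staying elementary, by applying the finite-dimensional inverse function theorem to each truncation $(t_0,t_1,\ldots,t_N)$ viewed as a function of $(r,u_0,\ldots,u_{N-1})$ with the higher parameters frozen, whose Jacobian is invertible by the same construction restricted to $N$ moments. Either way $\Phi$ is a local homeomorphism with smooth inverse onto its image, which is the assertion. I expect the main obstacle to be not the injectivity or the vector fields (both essentially settled above) but pinning down the correct function-space framework in which $d\Phi$ is a genuine isomorphism rather than merely injective with dense range, and controlling solvability of the moment problem on $\gamma$ uniformly for domains near ${\sf D}$ in the presence of an arbitrary admissible density $\sigma$.
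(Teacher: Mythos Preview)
Your argument is correct in outline and reaches the same conclusion, but your surjectivity half takes a genuinely different route from the paper's. You pull back to the unit circle via the Riemann map and argue by Fourier triangularity: $\zeta^{-k}|_\gamma$ becomes a tail $r^{-k}w^{-k}+\cdots$, so after an upper-triangular change prescribing $(\dot t_k)_{k\ge 0}$ amounts to prescribing the non-positive Fourier coefficients of a real density on $S^1$, which can always be done. The paper instead builds the vector fields $\partial/\partial x_k$, $\partial/\partial y_k$ directly from the Green function of the Dirichlet problem in ${\sf D^c}$: the special normal displacement $\delta_a n(\xi)=-\frac{\varepsilon\pi}{\sigma(\xi,\bar\xi)}\,\partial_{n_\xi}G(a,\xi)$ changes $t_k$ by $\varepsilon a^{-k}/k$ via the Poisson formula (Lemma~\ref{change}), and the contour integrals $H_k(\xi)=-i\oint_\infty z^k\partial_z G(z,\xi)\,dz$ then isolate a single real or imaginary part. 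Your Fourier approach is more elementary and self-contained; the paper's is shorter in context because the Green function and Poisson integral are already on stage and are re-used immediately afterward to define the tau-function and derive the Hirota equations. Your honest caveat about function spaces applies equally to the paper, which also works at the level of formal vector fields without fixing a Banach model. One small remark: the aside that the conformal-map coefficients $(r,u_0,u_1,\ldots)$ ``match the moments level by level'' is not used in your actual argument and is not quite right as stated---the map from $(u_j)$ to $(t_k)$ is triangular rather than diagonal---but this does not affect the Fourier argument you in fact run.
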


This statement allows one to identify functionals on the space
of domains ${\sf D}$ with functions of infinitely many independent
variables $t_0 , \{ t_k \}, \{ \bar t_k \}$.

\subsection{The Green function of the Dirichlet boundary value problem and
special deformations}
According to the Riemann mapping theorem,
there exists a conformal bijection $w: {\sf D^c}\rightarrow {\sf U}$
between ${\sf D^c}$ and
the exterior of the unit disk ${\sf U}=\{u\in\hat{\mathbb{C}}| \,\, |u|>1\}$.
If the conformal map $w(z)$ is known,
one can construct the Green function of the domain ${\sf D^c}$,
\begin{equation} \label{Green}
G(z,\xi)=
\frac{1}{2\pi}\log\left\vert\frac{w(z)-w(\xi)}{w(z)
\overline{w(\xi)}-1}\right\vert ,
\end{equation}
which solves the Dirichlet  boundary value problem
in ${\sf D^c}$. To wit, the Poisson formula
\beq\label{Poisson}
u(z)=-\oint_{\gamma}u_0(\xi)\partial_{n_{\xi}}
G(z,\xi)|d\xi|
\eeq
restores a
harmonic function in ${\sf D^c}$
from its boundary value $u_0=u|_{\gamma}$.
Here $\partial_{n_{\xi}}$ denotes the derivative along the
outward normal vector to the boundary of ${\sf D}$
with respect to the second variable and
$|d\xi|$ is an infinitesimal element of length along the boundary.
The Green function $G(z,\xi)=
\frac{1}{2\pi}\log|z-\xi|+g(z,\xi)$ in
${\sf D^c} \times {\sf D^c}$ is uniquely
defined by the following properties \cite{HC}:

\begin{itemize}
\item $G(z,\xi)=G(\xi,z)$
and $G(z,\xi')=0$
for any $z\in {\sf D^c}$ and $\xi'\in \gamma$;
\item The function $g(z,\xi)$ is continuous in ${\sf D^c}\times
{\sf D^c}$ and is continuous for $\xi$ in the closure of
${\sf D^c}$ at any $z\in {\sf D^c}$;
\item The function $g(z,\xi)$
is harmonic in $z$ for any $\xi\in {\sf D^c}$ and is harmonic in $\xi$ for any $z\in {\sf D^c}$.
\end{itemize}

\vspace{1ex}

Let us introduce the differential operator
\beq\label{nabla}
\nabla(z) =\partial_0 + D(z)+ \bar D(\bar z),
\eeq
where $D(z), \bar D(\bar z)$ are given by (\ref{DD}).

\vspace{1ex}

Fix a point $a\in {\sf D^c}$ and consider a special
infinitesimal deformation
of the form
\beq\label{special}
\delta_a n (z)=-\frac{\varepsilon \pi}{\sigma (z, \bar z)}\,
\p_{n_z} G(a,z)\,, \quad \quad z\in \gamma, \,\,\, \varepsilon \to 0,
\eeq
where $\delta_a n(z)$ is the normal displacement of the
boundary (positive if directed
outward ${\sf D}$) at the boundary point (equivalently,
one may speak about the normal ``velocity''
of the boundary deformation
which is $V_a(z)=\mbox{lim}_{\varepsilon \to 0}
(\delta n_a(z)/\varepsilon )$. For any sufficiently smooth initial
boundary this deformation is well-defined as $\varepsilon \to 0$.
By $\delta_a$ we denote the variation of any quantity under this deformation.

\begin{lemma}\label{change}  Let $X$ be any functional on the space
of domains ${\sf D}$ regarded as a function of $t_0, \{ t_k\},
\{\bar t_k\}$, then
for any $z\in {\sf D^c}$ we have $\delta_z X =\varepsilon \nabla(z)X$.
\end{lemma}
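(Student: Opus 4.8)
\textbf{Proof plan for Lemma \ref{change}.}
The plan is to show that the special deformation (\ref{special}) acts on the moments $t_k$, $\bar t_k$, $t_0$ exactly as $\varepsilon$ times the operator $\nabla(z)=\partial_0+D(z)+\bar D(\bar z)$ evaluated at the point $z=a$; once the action on the coordinates matches, the action on any functional $X$ follows by the chain rule, since by Theorem \ref{local-coordinates} every functional on the space of domains is a function of $(t_0,\{t_k\},\{\bar t_k\})$. So the real content is the computation of $\delta_a t_k$ for $k\geq 1$ and $\delta_a t_0$.

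First I would write the variation of each moment as a boundary integral. Using $t_k=\frac{1}{2\pi ik}\oint_{\gamma} z^{-k}\partial U\, dz$ and the general formula for the variation of a contour integral of the density $\sigma$ under a normal displacement $\delta n$, one gets $\delta_a t_k = -\frac{1}{\pi k}\oint_{\gamma} \zeta^{-k}\,\sigma(\zeta,\bar\zeta)\,\delta_a n(\zeta)\,|d\zeta|$ (this is the infinitesimal version of the area representation $t_k=-\frac1{\pi k}\iint_{\sf D^c}z^{-k}\sigma\,d^2z$, where moving the boundary outward by $\delta n$ removes an area element $\delta n\,|d\zeta|$ from ${\sf D^c}$), and similarly $\delta_a t_0 = \frac{1}{\pi}\oint_{\gamma}\sigma(\zeta,\bar\zeta)\,\delta_a n(\zeta)\,|d\zeta|$. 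Substituting the defining formula (\ref{special}), $\sigma(\zeta,\bar\zeta)\,\delta_a n(\zeta) = -\varepsilon\pi\,\partial_{n_\zeta}G(a,\zeta)$, the density $\sigma$ cancels and I am left with pure harmonic-function integrals: $\delta_a t_k = \varepsilon\cdot\frac{1}{k}\oint_{\gamma}\zeta^{-k}\,\partial_{n_\zeta}G(a,\zeta)\,|d\zeta|$ and $\delta_a t_0 = -\varepsilon\oint_{\gamma}\partial_{n_\zeta}G(a,\zeta)\,|d\zeta|$.

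Next I would evaluate these integrals using the Poisson formula (\ref{Poisson}) and properties of the Green function. Recall $u(z)=-\oint_{\gamma}u_0(\zeta)\,\partial_{n_\zeta}G(z,\zeta)\,|d\zeta|$ recovers a harmonic function in ${\sf D^c}$ from its boundary values; applying this with $u(z)=z^{-k}$ (harmonic, indeed analytic, in ${\sf D^c}$ for $k\geq 1$, and vanishing at $\infty$) gives $\oint_{\gamma}\zeta^{-k}\,\partial_{n_\zeta}G(a,\zeta)\,|d\zeta| = -a^{-k}$, hence $\delta_a t_k = -\varepsilon\, a^{-k}/k$, which is precisely $\varepsilon$ times the coefficient of $\partial_k$ one would read off from $D(a)$ acting on $t_k$ — i.e. $\delta_a t_k$ matches $\varepsilon D(a)$ applied to the coordinate $t_k$. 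For $t_0$, applying Poisson to the constant function $u\equiv 1$ gives $\oint_{\gamma}\partial_{n_\zeta}G(a,\zeta)\,|d\zeta| = -1$, so $\delta_a t_0 = \varepsilon$, matching $\varepsilon\partial_0$ applied to $t_0$. The anti-holomorphic moments $\bar t_k$ are treated identically using $u(z)=\bar z^{-k}$ (harmonic in ${\sf D^c}$, vanishing at $\infty$), giving $\delta_a\bar t_k = -\varepsilon\,\bar a^{-k}/k$, the $\bar D(\bar a)$ contribution. Collecting: $\delta_a$ acts on each of $t_0,\{t_k\},\{\bar t_k\}$ as $\varepsilon(\partial_0+D(a)+\bar D(\bar a)) = \varepsilon\nabla(a)$, so $\delta_a X = \varepsilon\nabla(a)X$ for every functional $X$. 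Relabeling $a$ as $z$ gives the statement.

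The main obstacle is the first step — correctly justifying the first-order variational formula $\delta_a t_k = -\frac1{\pi k}\oint_{\gamma}\zeta^{-k}\sigma\,\delta_a n\,|d\zeta|$ and the sign conventions for the normal derivative and outward displacement; this is where one must be careful that the deformation (\ref{special}) is well-defined as $\varepsilon\to 0$ (guaranteed for smooth boundary) and that the boundary integral picks up the right orientation. The subsequent evaluation via the Poisson kernel is essentially bookkeeping once one observes that $\partial_{n_\zeta}G(a,\zeta)$ is (up to sign) the Poisson kernel of ${\sf D^c}$ and that $z^{-k}$, $\bar z^{-k}$, $1$ are the natural harmonic test functions whose boundary values reproduce the generating-function structure of $\nabla(z)$.
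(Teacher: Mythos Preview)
Your approach is identical to the paper's: compute $\delta_a t_0$, $\delta_a t_k$, $\delta_a \bar t_k$ via the Poisson formula (\ref{Poisson}), then apply the chain rule using Theorem~\ref{local-coordinates}.

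There is, however, a sign slip in your variational formula for $t_k$, of exactly the kind you flagged as the main obstacle. From $t_k=-\frac{1}{\pi k}\iint_{{\sf D^c}}\zeta^{-k}\sigma\,d^2\zeta$, an outward displacement $\delta n$ removes the strip from ${\sf D^c}$, so two minus signs combine to give
\[
\delta_a t_k \;=\; +\,\frac{1}{\pi k}\oint_{\gamma}\zeta^{-k}\sigma\,\delta_a n\,|d\zeta|
\;=\; -\,\frac{\varepsilon}{k}\oint_{\gamma}\zeta^{-k}\partial_{n_\zeta}G(a,\zeta)\,|d\zeta|
\;=\; +\,\frac{\varepsilon}{k}\,a^{-k},
\]
not $-\varepsilon a^{-k}/k$. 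Note that $\varepsilon D(a)$ applied to the coordinate $t_k$ is $\varepsilon a^{-k}/k$, so your stated value does \emph{not} match it; you asserted agreement where the signs in fact disagree. The same correction applies to $\bar t_k$. With this fixed, your argument is complete and coincides with the paper's proof.
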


\begin{proof} It is easy to see that
$$
\delta_z t_0 =-\varepsilon
\oint_{\gamma} \p_{n_{\xi}} G(z, \xi ) |d\xi |=\varepsilon\,,\quad
\delta_z t_k =-\frac{\varepsilon}{k}
\oint_{\gamma} \xi^{-k}\p_{n_{\xi}} G(z, \xi ) |d\xi |=
\frac{\varepsilon}{k}\, z^{-k}
$$
by virtue of the Poisson formula (\ref{Poisson}).
Therefore by Theorem \ref{local-coordinates} we have:
$$
\delta_z X=\frac{\partial X}{\partial t_0}\, \delta_z t_0 +
\sum_{k\geq 1}\frac{\partial X}{\partial t_k}\, \delta_z t_k +
\sum_{k\geq 1}\frac{\partial X}{\partial \bar t_k}\, \delta_z \bar t_k=
\varepsilon
\Bigl (\partial_0 + \sum_{k\geq 1}\frac{z^{-k}}{k}\, \partial_k+
\sum_{k\geq 1} \frac{\bar{z}^{-k}}{k}\, \bar{\partial}_k\Bigr )X.
$$
\end{proof}

Now we can explicitly define the deformations that change only
either $x_k={\rm Re}\, t_k$ or $y_k={\rm Im}\, t_k$ keeping all
other moments fixed. From the proof of Lemma \ref{change} it follows that
the normal displacements $\delta n(\xi )=\varepsilon
{\rm Re}\, (\p_{n_{\xi}} H_k(\xi ))$ and
$\delta n(\xi )=\varepsilon
{\rm Im}\, (\p_{n_{\xi}} H_k(\xi ))$, where
$$
H_k(\xi )=-i\oint_{\infty}z^k \p_z G(z, \xi )\, dz
$$
(the contour integral goes around infinity) change the real and
imaginary parts of $t_k$ by $\pm \varepsilon$ respectively
keeping all
other moments unchanged.
In particular, the deformation
$$\delta_{\infty}n(\xi )=-
\frac{\varepsilon \pi}{\sigma (\xi , \bar \xi )}\, \p_{n_{\xi}}
G(\infty , \xi )
$$
changes $t_0$ only.
Therefore, the vector fields
$\p/ \p t_0$, $\p /\p x_k$, $\p / \p y_k$ in the space of
domains are locally well-defined and commute. Existence of such vector
fields means that the variables $t_k$ are independent and
$\p_k = \frac{1}{2}(\p_{x_k}-i\p_{y_k})$,
$\bar \p_k = \frac{1}{2}(\p_{x_k}+i\p_{y_k})$ can be understood
as partial derivatives.

\vspace{1ex}

We will also need the following simple lemmas.

\begin{lemma}\label{harmonic}
Let $X$ be a functional of the form
$X=\iint_{\sf D} \Psi (\zeta , \bar \zeta )
\, \sigma (\zeta , \bar \zeta ) \, d^2\zeta$ with an
arbitrary domain-independent integrable
function $\Psi$ regular on the boundary, then
$$
\nabla (z)X= \pi \Psi ^H (z),
$$
where $\Psi^H(z)$ is the (unique) harmonic extension of the function
$\Psi$ from the boundary to the domain ${\sf D^c}$.
\end{lemma}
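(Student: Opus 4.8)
The plan is to combine Lemma~\ref{change} with a change-of-variables argument. First I would write $X$ as a function of the moments $t_0, \{t_k\}, \{\bar t_k\}$ via Theorem~\ref{local-coordinates} and apply Lemma~\ref{change}, which tells us that $\nabla(z) X = \varepsilon^{-1}\delta_z X$, i.e. $\nabla(z)X$ is the variation of $X$ under the special deformation $\delta_z n(\zeta)$ of the boundary. So it suffices to compute $\delta_z X$ for $X = \int_{\sf D}\Psi(\zeta,\bar\zeta)\,\sigma(\zeta,\bar\zeta)\,d^2\zeta$.

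Next I would compute this variation directly. Since $X$ is an integral over ${\sf D}$ with density $\Psi\sigma$, and $\Psi$ is regular near $\gamma$, an infinitesimal normal displacement $\delta_z n(\zeta)$ of the boundary changes $X$ by the flux-type integral
\beq\label{planboundary}
\delta_z X = \oint_{\gamma}\Psi(\zeta,\bar\zeta)\,\sigma(\zeta,\bar\zeta)\,\delta_z n(\zeta)\,|d\zeta|.
\eeq
Now I substitute the explicit form of the special deformation from (\ref{special}): $\delta_z n(\zeta) = -\varepsilon\pi\,\sigma(\zeta,\bar\zeta)^{-1}\p_{n_\zeta}G(z,\zeta)$. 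The factor $\sigma(\zeta,\bar\zeta)$ from the area density cancels against $\sigma(\zeta,\bar\zeta)^{-1}$ from the deformation, leaving
\beq\label{planpoisson}
\delta_z X = -\varepsilon\pi\oint_{\gamma}\Psi(\zeta,\bar\zeta)\,\p_{n_\zeta}G(z,\zeta)\,|d\zeta|.
\eeq
Comparing with the Poisson formula (\ref{Poisson}), the integral $-\oint_\gamma \Psi(\zeta,\bar\zeta)\,\p_{n_\zeta}G(z,\zeta)\,|d\zeta|$ is precisely the harmonic extension $\Psi^H(z)$ of the boundary values of $\Psi$ into ${\sf D^c}$. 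Hence $\delta_z X = \varepsilon\pi\Psi^H(z)$, and dividing by $\varepsilon$ using Lemma~\ref{change} gives $\nabla(z)X = \pi\Psi^H(z)$, as claimed. Note that only the boundary values of $\Psi$ enter, consistent with the statement mentioning the harmonic extension "from the boundary."

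The step I expect to be the main obstacle is the careful justification of the boundary-variation formula (\ref{planboundary}): one must verify that the only contribution to $\delta_z X$ comes from sweeping the boundary (the term from varying $\sigma$ or $\Psi$ as functions of the domain vanishes because $\sigma$, $\Psi$ are fixed functions on $\mathbb{C}^*$, independent of ${\sf D}$), and that regularity of $\Psi$ on $\gamma$ suffices to make the boundary integral well-defined as $\varepsilon\to 0$. Once this is in place, the cancellation of $\sigma$ and the recognition of the Poisson kernel are immediate, and the rest is bookkeeping with signs and the orientation conventions for $\p_{n_\zeta}$ already fixed earlier in this section.
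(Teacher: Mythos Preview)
Your proposal is correct and follows essentially the same approach as the paper: compute $\delta_z X$ as the boundary flux $\oint_\gamma \Psi\,\sigma\,\delta_z n\,|d\zeta|$, substitute the special deformation (\ref{special}) so that $\sigma$ cancels, recognize the Poisson integral (\ref{Poisson}) for $\Psi^H(z)$, and then invoke Lemma~\ref{change}. The paper's proof is identical in content but terser, taking the boundary-variation formula for granted without the justification you outline.
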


\begin{proof}
The variation of $X$ under the special deformation
(\ref{special}) is
$$\delta_z X=\oint_{\gamma}\Psi (\zeta , \bar \zeta )
\sigma (\zeta , \bar \zeta )\delta n_z (\zeta )\, |d\zeta |=
-\varepsilon \pi \oint_{\gamma}\Psi (\zeta , \bar \zeta )
\p_{n_{\zeta}}G(z, \zeta )\, |d\zeta |
$$
Now the assertion obviously follows from Lemma \ref{change} and
the Poisson formula (\ref{Poisson}).
\end{proof}

\begin{lemma}\label{harmonic1}
Let $X$ be a functional of the form
$$X=\iint_{\sf D} \iint_{\sf D}  \sigma (\zeta _1, \bar \zeta_1 )
\Psi (\zeta _1, \bar \zeta _1;
\zeta _2, \bar \zeta _2)
\, \sigma (\zeta _2, \bar \zeta_2 )\,
d^2\zeta_1 d^2\zeta_2$$ with an
arbitrary domain-independent integrable
function $\Psi$ regular on the boundary, then
$$
\nabla (z)X= 2\pi \Phi^H (z),
$$
where $\Phi (z)=\iint_{\sf D}\Psi (z, \bar z;
\zeta , \bar \zeta )\sigma (\zeta , \bar \zeta )\,
d^2\zeta$.
\end{lemma}

\noindent
The proof is similar to that of Lemma \ref{harmonic}.

\subsection{The dispersionless tau-function}
Consider the following functional on the space of domains ${\sf D}$:
\beq\label{F}
F=-\, \frac{1}{\pi^2}
\iint_{\sf D} \iint_{\sf D}
\sigma (z, \bar z)\log \left |z^{-1} - \zeta^{-1}\right |
\sigma (\zeta , \bar \zeta )\, d^2 z d^2\zeta .
\eeq

\begin{theorem}\label{l4.2}(\cite{Takh,KKMWZ,MWZ,Ztmf}) It holds
\beq\label{GF}
G(z, \zeta )=\frac{1}{2\pi}\log \, \bigl | z^{-1} - \zeta ^{-1}\bigr |
+\frac{1}{4\pi} \nabla (z) \nabla (\zeta )F.
\eeq
\end{theorem}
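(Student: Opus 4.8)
The plan is to verify the identity (\ref{GF}) by checking that the right-hand side, call it $\tilde G(z,\zeta)$, satisfies the three characterizing properties of the Green function listed above, and then invoke uniqueness. The symmetry $\tilde G(z,\zeta)=\tilde G(\zeta,z)$ is immediate from the symmetry of $F$ in its two integration variables (and hence of $\nabla(z)\nabla(\zeta)F$). The singularity structure is also built in: $\frac{1}{2\pi}\log|z^{-1}-\zeta^{-1}|=\frac{1}{2\pi}\log|z-\zeta| - \frac{1}{2\pi}\log|z| - \frac{1}{2\pi}\log|\zeta|$, and since $F$ is a smooth functional of the moments with $\nabla(z)F$ regular in ${\sf D^c}$ away from $\infty$, the only non-smooth term is the $\frac{1}{2\pi}\log|z-\zeta|$, matching the required local behaviour. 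One must also check harmonicity of the regular part in each variable and the correct behaviour at $\infty$; the latter is controlled by the explicit $-\frac{1}{2\pi}\log|z|$ piece together with the structure of $\nabla(z)F$.

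The real content is the boundary condition: $\tilde G(z,\zeta)=0$ for $z\in {\sf D^c}$, $\zeta\in\gamma$ (equivalently, by symmetry, for $\zeta\in {\sf D^c}$, $z\in\gamma$). First I would compute $\nabla(\zeta)F$ using Lemma \ref{harmonic}. Writing $F=-\frac{1}{\pi}\iint_{\sf D}\Psi(\zeta,\bar\zeta)\sigma(\zeta,\bar\zeta)\,d^2\zeta$ with $\Psi(\zeta,\bar\zeta)=\frac{1}{\pi}\iint_{\sf D}\sigma(z,\bar z)\log|z^{-1}-\zeta^{-1}|\,d^2z$, Lemma \ref{harmonic} gives $\nabla(\zeta)F=-\pi\Psi^H(\zeta)$, the harmonic extension to ${\sf D^c}$ of the boundary values of $\Psi$. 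Then I would apply $\nabla(z)$ once more; since $\nabla(z)$ acts on functionals via Lemma \ref{change} as $\varepsilon^{-1}\delta_z$, differentiating the harmonic-extension expression reduces to differentiating $\Psi$ (an integral over ${\sf D}$ of a fixed kernel against $\sigma$) under the special deformation, which again falls to Lemma \ref{harmonic}-type reasoning applied to $\Psi$ itself. The upshot should be an explicit formula of the form $\frac{1}{4\pi}\nabla(z)\nabla(\zeta)F = -\frac{1}{2\pi}\bigl(\log|z^{-1}-\zeta^{-1}|\bigr)^{H}$ where the superscript $H$ denotes harmonic extension in, say, the $z$ variable from $\gamma$ to ${\sf D^c}$ (symmetrized suitably in both variables). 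Granting this, $\tilde G(z,\zeta)$ is $\frac{1}{2\pi}\log|z^{-1}-\zeta^{-1}|$ minus its own harmonic extension from the boundary, which manifestly vanishes when either argument lies on $\gamma$.

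The main obstacle I anticipate is bookkeeping the double application of $\nabla$ carefully: Lemma \ref{harmonic} as stated produces the harmonic extension of boundary values, but to iterate it one must track how the \emph{interior} integral $\Psi$ varies under the deformation, and reconcile the two harmonic-extension operations (one in $z$, one in $\zeta$) so that the final object is genuinely symmetric and has vanishing boundary values in the correct sense. In particular one has to be careful that $\nabla(z)$ applied to $\Psi^H(\zeta)$ — a harmonic function of $\zeta$ whose coefficients depend on the domain — commutes appropriately with taking the harmonic extension, and that no spurious $\log|z|$ or $\log|\zeta|$ terms survive. A clean way to organize this is to first establish the auxiliary identity $\nabla(z)\Psi(\zeta,\bar\zeta)\big|_{\text{as functional of }{\sf D}} = \log|z^{-1}-\zeta^{-1}|$ for $z$ near the boundary evaluated against the deformation, then extend harmonically; the consistency of the two extensions then follows from uniqueness of the Green function, closing the argument. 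Alternatively, one cites that this computation is exactly the one performed in \cite{KKMWZ,MWZ,Ztmf} and only indicates the key steps, which is presumably what the authors do.
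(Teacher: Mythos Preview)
Your approach is essentially the paper's: two successive applications of Lemma~\ref{harmonic} followed by an appeal to the uniqueness of the Green function. Your anticipated bookkeeping obstacle dissolves because the first application already yields the exact identity $\nabla(z)F=-\frac{2}{\pi}\iint_{\sf D}\log|z^{-1}-\zeta^{-1}|\,\sigma(\zeta,\bar\zeta)\,d^2\zeta$ (formula~(\ref{nablaF}), the factor~$2$ coming from the symmetry of the double integral), which for fixed $z\in{\sf D^c}$ is a genuine single integral over ${\sf D}$ with a domain-independent integrand, so Lemma~\ref{harmonic} applies cleanly the second time without any need to commute $\nabla$ past a harmonic-extension operator.
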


\begin{proof}
The proof consists in successive application of Lemmas \ref{harmonic1},
\ref{harmonic}
and using the characteristic properties of the Green function.
Applying Lemma \ref{harmonic1} to (\ref{F}), we get:
\beq\label{nablaF}
\nabla (z)F= -\frac{2}{\pi}\iint_{\sf D} \log
\left |z^{-1}-\zeta^{-1}\right |\sigma (\zeta , \bar \zeta )\, d^2\zeta
\eeq
for $z\in {\sf D^c}$. Applying Lemma \ref{harmonic}, we conclude that
$\nabla (\zeta )\nabla (z)F$ is the harmonic continuation of the
function $-2\log \left |z^{-1}-\zeta^{-1}\right |$ from the boundary
to the domain ${\sf D^c}$. This function is harmonic everywhere in
${\sf D^c}$ except at $\zeta =z$, where it has the logarithmic
singularity. It can be cancelled, without changing
the boundary value, by adding the function $4\pi G(z, \zeta )$.
\end{proof}

Let $w(z)$ be the conformal map from ${\sf D^c}$ onto the
exterior of the unit circle normalized by the conditions
$w(\infty )=\infty$ and $w'(\infty )$ is real positive. It has the form
$\displaystyle{
w(z)=pz+\sum\limits_{j\geq 0} p_j z^{-j}}
$,
where $p>0$.

\begin{corollary} \label{t4.1} The conformal map $w(z)$ is given by
\beq\label{w(z)}
w(z)=z\exp
\left (\Bigl (-\frac 12 \partial_0^2-\partial_0 D(z)\Bigr )F\right ).
\eeq
\end{corollary}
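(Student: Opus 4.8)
The plan is to extract the asymptotics of the Green function $G(z,\zeta)$ as $\zeta\to\infty$ from the representation (\ref{GF}) and to match it against the definition of the conformal map $w(z)$. Recall that in terms of $w$ one has $G(z,\zeta)=\frac{1}{2\pi}\log\bigl|\frac{w(z)-w(\zeta)}{w(z)\overline{w(\zeta)}-1}\bigr|$. First I would let $\zeta\to\infty$ in this formula. Since $w(\zeta)=p\zeta+O(1)$ with $p>0$, a direct expansion gives
\beq\label{Gasymp}
G(z,\zeta)=-\frac{1}{2\pi}\log |w(z)| -\frac{1}{2\pi}\log p -\frac{1}{2\pi}\log|\zeta| +o(1),\qquad \zeta\to\infty .
\eeq
On the other hand, from (\ref{GF}) the same limit reads
$$
G(z,\zeta)=\frac{1}{2\pi}\log\bigl|z^{-1}-\zeta^{-1}\bigr| +\frac{1}{4\pi}\nabla(z)\nabla(\zeta)F .
$$
The first term contributes $-\frac{1}{2\pi}\log|\zeta| +\frac{1}{2\pi}\log|z^{-1}|+o(1)$. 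For the second term I would use that $\nabla(\zeta)=\partial_0+D(\zeta)+\bar D(\bar\zeta)$, so that as $\zeta\to\infty$ only the $\partial_0$ piece survives: $\nabla(\zeta)\nabla(z)F\to \partial_0\nabla(z)F +o(1)$, because $D(\zeta)$ and $\bar D(\bar\zeta)$ carry negative powers of $\zeta$.

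Matching the two expansions term by term then gives the identity
$$
-\log|w(z)| -\log p +\log|z| = \log|z^{-1}| +\frac{1}{2}\,\partial_0\nabla(z)F ,
$$
i.e.
\beq\label{logw}
\log|w(z)| = \log|z| -\frac{1}{2}\,\partial_0\bigl(\partial_0+D(z)+\bar D(\bar z)\bigr)F -\log p .
\eeq
To promote this equality of real parts (i.e.\ of $\log|\cdot|$) to an equality of the holomorphic functions themselves, I would argue that both sides of the would-be identity $w(z)=\mathrm{const}\cdot z\exp(-\tfrac12\partial_0^2 F -\partial_0 D(z)F)$ are holomorphic and nonvanishing in ${\sf D^c}$ (for the right side this is because $D(z)F$ is a series in $z^{-1}$ and $\partial_0^2 F$ is a constant in $z$), so that their ratio is a holomorphic nonvanishing function whose modulus is $1$ on all of ${\sf D^c}$ by (\ref{logw}); hence by the maximum principle it is a unimodular constant. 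Here one must be careful that (\ref{logw}) was derived as a boundary/asymptotic relation, so I would instead observe that both $\log|w(z)|$ and the right-hand side of (\ref{logw}) are harmonic in $z\in{\sf D^c}\setminus\{\infty\}$, with the same logarithmic singularity $\log|z|$ at $\infty$ and the same boundary value (zero on $\gamma$, since $|w|=1$ there and $\nabla(z)\nabla(\zeta)F$ recovers the Green function which vanishes on $\gamma$); uniqueness of the solution of this Dirichlet problem forces the two harmonic functions to coincide, which is exactly (\ref{logw}).

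Finally I would pin down the multiplicative constant. Taking $z\to\infty$ in (\ref{logw}) and using $w(z)=pz+O(1)$ gives $\log p = -\tfrac12\partial_0^2 F|_{\text{const term}} -\log p$; comparing with the normalization $w'(\infty)=p$ real positive, one reads off that the constant is exactly $1$ and that $\log p = -\tfrac14\partial_0^2 F$ consistently — more precisely, the only freedom left after matching the harmonic functions is a unimodular constant $e^{i\alpha}$, and the condition that $w'(\infty)>0$ together with $\partial_0^2 F$ being real forces $e^{i\alpha}=1$. This yields
$$
w(z)=z\exp\left(\Bigl(-\tfrac12\partial_0^2 -\partial_0 D(z)\Bigr)F\right),
$$
as claimed. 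The main obstacle I anticipate is the passage from the identity of moduli (\ref{logw}) to the identity of the holomorphic functions with the correct constant: one has to invoke the right uniqueness statement (Dirichlet problem in ${\sf D^c}$ with a prescribed logarithmic singularity at $\infty$) rather than a naive maximum-principle argument, and then separately fix the phase using the real-positivity normalization of $w'(\infty)$ and the reality of $\partial_0^2 F$. Everything else is bookkeeping with the expansions of $D(z)$, $\bar D(\bar z)$ and the asymptotics of $w$ near $\infty$.
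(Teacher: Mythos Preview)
Your overall strategy---sending $\zeta\to\infty$ in both expressions for $G(z,\zeta)$ and then separating the holomorphic and antiholomorphic parts in $z$---is exactly what the paper does. However, your asymptotic expansion of $G(z,\zeta)$ as $\zeta\to\infty$ is wrong, and the error propagates.

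The point $\infty$ lies in ${\sf D^c}$, so the Green function $G(z,\zeta)$ is \emph{regular} there, not singular. Concretely, as $\zeta\to\infty$ one has $|w(z)-w(\zeta)|\sim|w(\zeta)|$ and $|w(z)\overline{w(\zeta)}-1|\sim|w(z)|\,|w(\zeta)|$, so the ratio tends to $1/|w(z)|$ and hence
$$
G(z,\infty)=-\tfrac{1}{2\pi}\log|w(z)|
$$
with no $\log|\zeta|$ or $\log p$ terms. Likewise $\log|z^{-1}-\zeta^{-1}|\to-\log|z|$ with no $\log|\zeta|$ term, since $\zeta^{-1}\to 0$. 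The correct identity replacing your displayed equation for $\log|w(z)|$ is therefore
$$
\log|w(z)|=\log|z|-\tfrac{1}{2}\,\partial_0\bigl(\partial_0+D(z)+\bar D(\bar z)\bigr)F,
$$
with no extra $-\log p$. From this the separation argument (holomorphic part versus its complex conjugate, using reality of $F$ so that $\partial_0\bar D(\bar z)F=\overline{\partial_0 D(z)F}$, and the positivity of $w'(\infty)$ to kill the residual imaginary constant) gives the stated formula directly, together with the correct relation $\log p=-\tfrac{1}{2}\partial_0^2 F$ noted right after the corollary. Your spurious $-\log p$ would instead force $\log p=-\tfrac14\partial_0^2 F$, which contradicts both the paper and the formula you claim to have derived; the final paragraph of your argument does not actually recover the constant correctly from your version of the identity.
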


\begin{proof} From equation (\ref{Green}) it follows that
$2\pi G(z, \infty )=-\log |w(z)|$. Tending $\xi \to \infty$ in
(\ref{GF}) and separating holomorphic and antiholomorphic parts
in $z$, we get the result.
\end{proof}

\noindent
Note that the limit $z\rightarrow\infty$  in (\ref{w(z)})
yields
$\log p=-\frac{1}{2}\, \partial_0^2F.$

\vspace{1ex}

The following theorem establishes the embedding of the conformal
dynamics into the dispersionless 2DTL hierarchy and identifies
$F$ with the dispersionless tau-function.

\begin{theorem} (\cite{KMZ05,Ztmf})\label{t4.2}
The function $F$ satisfies the equations
$$ \label{2}(z-\xi)e^{D(z)D(\xi)F}=z e^{-\partial_0D(z)F}-\xi e^{-\partial_0D(\xi)F},$$
$$ \label{3}(\bar z-\bar\xi)e^{\bar D(\bar z)\bar D(\bar\xi)F}=
\bar z e^{-\partial_0\bar D(\bar z)F}-\bar\xi
e^{-\partial_0\bar D(\bar\xi)F},$$
$$ \label{4}1- e^{-D(z)\bar D(\bar\xi)F}=\frac{1}{z\bar\xi} \,
e^{\partial_0 (\partial_0+D(z)+\bar D(\bar\xi))F}
$$
(cf. (\ref{eq1})-(\ref{eq3})).
\end{theorem}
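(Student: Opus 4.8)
The plan is to verify the three dispersionless Hirota equations for the tau-function $F$ defined by the double integral (\ref{F}) directly, using the Green function representation (\ref{GF}) from Theorem \ref{l4.2} together with the explicit formula (\ref{w(z)}) for the conformal map and the key geometric Lemma \ref{change} (that $\delta_z X = \varepsilon \nabla(z)X$). The starting point is the observation that all three Hirota equations are, after taking logarithms, statements about the symmetry and the behaviour of $\nabla(z)\nabla(\xi)F$, hence about the Green function $G(z,\xi)$ of ${\sf D^c}$. So the strategy is: rewrite each equation in terms of $G$, then prove that identity by invoking the classical properties of the Green function (symmetry, vanishing on the boundary, logarithmic singularity) and the explicit form of the Cauchy kernel on the exterior of the unit disk.

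First I would treat equation (\ref{2}) (the first Hirota equation). Taking $-\partial_0$ of both sides of (\ref{GF}) with $\zeta\to\infty$ gives, via Corollary \ref{t4.1}, the conformal map $w(z)=z\,e^{(-\frac12\partial_0^2-\partial_0 D(z))F}$; more generally, applying $D(\xi)$ to (\ref{nablaF}) and using the second application of Lemma \ref{harmonic} shows that $D(z)D(\xi)F$ is (up to the known $\log|z^{-1}-\xi^{-1}|$ term) the holomorphic-holomorphic part of $4\pi G(z,\xi)$. Then one recognizes that the combination $\log\frac{w(z)-w(\xi)}{z-\xi}$ — which is holomorphic in both variables and symmetric — is exactly what is encoded, so that $(z-\xi)e^{D(z)D(\xi)F} = (w(z)-w(\xi))/(w'(\infty)^{?}\cdots)$; comparing the $\xi\to\infty$ asymptotics on both sides (using $w(\xi)\sim p\xi$ and $\log p = -\frac12\partial_0^2 F$) fixes all normalizations and yields (\ref{2}). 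Equation (\ref{3}) is obtained from (\ref{2}) by complex conjugation, since $\bar D(\bar z)$, $\bar t_k$ are the conjugates of $D(z)$, $t_k$ and $F$ is real; this is essentially free.

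The main work is equation (\ref{4}), the mixed equation. Here I would take the full expression (\ref{GF}) for $G(z,\bar\xi)$ with $z\in{\sf D^c}$, $\xi\in{\sf D^c}$, and separate in $z$ the holomorphic part (acted on by $D(z)$), in $\bar\xi$ the antiholomorphic part (acted on by $\bar D(\bar\xi)$), and the $t_0$-parts; the cross term $D(z)\bar D(\bar\xi)F$ picks out precisely $\log|1 - \overline{w(\xi)}/w(z)|$-type data from the second term $\frac{1}{2\pi}\log|w(z)\overline{w(\xi)}-1|^{-1}$ in the Green function (\ref{Green}). Combining with (\ref{w(z)}) for $w(z)$ and its conjugate for $\overline{w(\xi)}$, and with $\log p=-\frac12\partial_0^2F$ extracted from the $z\to\infty$ (resp. $\bar\xi\to\infty$) limit, one should arrive at $1-e^{-D(z)\bar D(\bar\xi)F} = (z\bar\xi)^{-1}w(z)\overline{w(\xi)}\,e^{\text{(corrections)}}$, and the corrections assemble into $e^{\partial_0(\partial_0 + D(z)+\bar D(\bar\xi))F}$ by the same kind of bookkeeping of $\partial_0$-derivatives. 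The main obstacle is this careful separation of holomorphic/antiholomorphic/zero-mode pieces and keeping track of the $\partial_0$ and $\partial_0^2$ contributions coming from $\log p$ and from the $t_0$-dependence of the domain; the geometry is encoded cleanly in $G$, but matching the exact exponents requires the normalization $w(\infty)=\infty$, $w'(\infty)>0$ and the relation $\partial_0 t_k = \ldots$ implicit in Lemma \ref{change}. Since the full computation is carried out in the cited references \cite{KMZ05,Ztmf}, I would present the argument as above and refer to those papers for the remaining routine verification, noting that Lemma \ref{harmonic} and Theorem \ref{l4.2} reduce everything to identities for the Green function that hold by its defining properties.
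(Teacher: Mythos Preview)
Your approach is essentially the same as the paper's: combine the explicit Green function formula (\ref{Green}) with the representation (\ref{GF}) to obtain
\[
\log\left\vert\frac{w(z)-w(\xi)}{1-w(z)\overline{w(\xi)}}\right\vert
= \log\left\vert\frac{1}{z}-\frac{1}{\xi}\right\vert + \tfrac12\,\nabla(z)\nabla(\xi)F,
\]
substitute (\ref{w(z)}) for $w$, and separate holomorphic and antiholomorphic parts in $z$ and $\xi$ to read off the three Hirota equations. The paper states exactly this in two sentences; your proposal is a more expanded version of the same argument, though you do not need to invoke Lemmas~\ref{change} and~\ref{harmonic} again, since their content is already absorbed into Theorem~\ref{l4.2} and Corollary~\ref{t4.1}.
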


\begin{proof}
The proof is the same as in \cite{KMZ05}.
Combining (\ref{Green}) and (\ref{GF}), we get
$$\log\left\vert\frac{w(z)-w(\xi)}{1-w(z)\bar w(\xi)}
\right\vert= \log\left\vert\frac{1}{z} - \frac{1}{\xi}\right\vert + \frac12\nabla(z)\nabla(\xi)F.
$$
Next, substituting here $w(z)$ from (\ref{w(z)}) and separating
holomorphic and antiholomorphic parts
in $z$, $\xi$, we obtain the desired result.
\end{proof}

{\bf Important remark.} Note that although the definitions
of the moments and the function $F$ essentially depend on the background
density $\sigma$, the formulas for the Green function and the
conformal map (\ref{GF}), (\ref{w(z)}) are $\sigma$-independent.
This means that the conformal dynamics is described by any solution
to the dispersionless 2DTL hierarchy of this class.

\subsection{Complimentary moments}
The integral of Cauchy type
$$
C(z)=\frac{1}{2\pi i}\oint_{\gamma}
\frac{U_{\zeta}(\zeta , \bar \zeta )\, d\zeta}{\zeta -z}\,,
\quad \quad
U_{\zeta}(\zeta , \bar \zeta )\equiv \p_{\zeta}U(\zeta , \bar \zeta ),
$$
defines a function which is analytic in ${\sf D}$ and ${\sf D^c}$
with a jump across $\gamma$. Let $C^{\pm}(z)$ be the analytic
functions defined by this integral in ${\sf D}$ and ${\sf D^c}$
respectively. By the Sokhotski-Plemelj formula, the jump of the
function $C(z)$ across the contour $\gamma$
is equal to $\p U(z, \bar z)$:
\beq\label{jump}
(C^+(z)-C^-(z))\bigr |_{z\in \gamma}= \p U(z, \bar z).
\eeq

\vspace{1ex}

As it follows from  the proof of Proposition
\ref{local-uniqueness}, $C^+(z)$ is the
generating function of the moments $t_k$:
$$
C^+(z)=\frac{1}{2\pi i}\oint_{\gamma}
\frac{U_{\zeta}(\zeta , \bar \zeta )\, d\zeta}{\zeta -z}=
\sum_{k\geq 1}kt_kz^{k-1}\,, \quad z\in {\sf D}.
$$
Similarly, $C^-(z)$ is the generating function of the set
of {\it complimentary moments} $v_k$:
$$
C^-(z)=\frac{1}{2\pi i}\oint_{\gamma}
\frac{U_{\zeta}(\zeta , \bar \zeta )\, d\zeta}{\zeta -z}=-
\frac{t_0}{z}
-\sum_{k\geq 1}v_kz^{-k-1}\,, \quad z\in {\sf D^c},
$$
which are given by the integrals
$$v_k=\frac{1}{2\pi i}\oint_{\gamma} z^{k}\partial U (z, \bar z) \, dz =
\frac{1}{\pi}\iint_{{\sf D}}\! z^{k}
\sigma  (z, \bar z) \, d^2 z  \,,\quad k\geq 1.
$$
They are functions of the moments $t_0, \{t_k \}, \{\bar t_k\}$.
It is also useful to introduce the logarithmic moment
$$v_0  =\frac{1}{\pi }\iint_{{\sf D}}\!
\log |z|^2 \sigma  (z, \bar z) \,  d^2 z.
$$

\begin{theorem} (cf. \cite{MWZ,WZ,Ztmf}) The following relations hold:
\beq\label{vk}
v_0=\p_0 F, \quad \quad v_k=\p_k F, \quad \quad \bar v_k=\bar \p_k F,
\quad \quad k\geq 1.
\eeq
\end{theorem}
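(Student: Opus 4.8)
The plan is to prove the three identities in (\ref{vk}) by the same variational technique used for the Green function in Theorem \ref{l4.2}, namely by applying the special deformation (\ref{special}) and Lemma \ref{change}. First I would establish the relations for $v_k$ and $\bar v_k$ with $k\geq 1$. Recall from Corollary \ref{t4.1} that $\log p =-\frac12\p_0^2 F$, and from the expansion of the conformal map that the subleading coefficients $p_j$ of $w(z)$ are encoded in $-\p_0 D(z)F$; more relevantly, formula (\ref{nablaF}) gives $\nabla(z)F$ as an explicit integral. The idea is to expand $\nabla(z)F = \p_0 F + \sum_{k\geq 1}\bigl(z^{-k}\p_k F + \bar z^{-k}\bar\p_k F\bigr)$ in powers of $z^{-1}$ for $z\in {\sf D^c}$ near $\infty$, and to compare with the direct expansion of the right-hand side of (\ref{nablaF}), $-\frac{2}{\pi}\iint_{\sf D}\log|z^{-1}-\zeta^{-1}|\,\sigma(\zeta,\bar\zeta)\,d^2\zeta$. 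Writing $\log|z^{-1}-\zeta^{-1}| = \log|z|^{-1} + \log|1 - z\zeta^{-1}|$ and then $\log|1-z\zeta^{-1}| = \mathrm{Re}\,\log(1-z\zeta^{-1})$ — but care is needed since $|z|$ is large — it is cleaner to use the form $\log|z^{-1}-\zeta^{-1}| = \log|z^{-1}| + \mathrm{Re}\sum_{k\geq 1}\frac{1}{k}(\zeta/z)^k$ valid for $|\zeta|<|z|$; matching the coefficient of $z^{-k}$ (holomorphic part) against $\p_k F$ immediately yields $\p_k F = \frac{1}{\pi k}\iint_{\sf D}\zeta^k\sigma\,d^2\zeta = v_k$, and the antiholomorphic part gives $\bar\p_k F = \bar v_k$.

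For the logarithmic moment, I would match the $z$-independent term in the same expansion: the constant term of $-\frac{2}{\pi}\iint_{\sf D}\log|z^{-1}-\zeta^{-1}|\sigma\,d^2\zeta$ is $-\frac{2}{\pi}\iint_{\sf D}\log|z|^{-1}\sigma\,d^2\zeta = \frac{2\log|z|}{\pi}\iint_{\sf D}\sigma\,d^2\zeta$, which is not constant in $z$ — so one must be more careful about how the $\log|z|$ term is split. The correct statement is that $\nabla(z)F$, as a function on ${\sf D^c}$, is a harmonic function whose expansion at $\infty$ has no $\log|z|$ term precisely because $\nabla(z)$ contains $\p_0$ but $D(z),\bar D(\bar z)$ start at order $z^{-1}$; the $\p_0 F$ piece is the genuine constant. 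So instead I would isolate $\p_0 F$ by noting (as in the proof of Lemma \ref{change}, where $\delta_z t_0 = \varepsilon$) that $\p_0 F = \lim_{z\to\infty}\bigl(\nabla(z)F\bigr)$ after subtracting the $\log$-growth, or more directly use Lemma \ref{harmonic}-type reasoning: apply $\p_0 = \nabla(\infty)$ (the deformation $\delta_\infty$ changing only $t_0$) to $F$ in the form (\ref{F}) and get $\p_0 F = -\frac{2}{\pi}\iint_{\sf D}\log|z^{-1}|\sigma\,d^2\zeta\big|_{\text{harm. ext., }z=\infty}$, which after regularizing the $z\to\infty$ limit reproduces $\frac{1}{\pi}\iint_{\sf D}\log|z|^2\,\sigma\,d^2\zeta = v_0$.

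The cleanest route, which I would actually write up, avoids the delicate $z\to\infty$ bookkeeping: use Lemma \ref{harmonic} directly. Define $\Psi(\zeta,\bar\zeta) = \log|\zeta|^2$ (for $v_0$) and $\Psi(\zeta,\bar\zeta)=\zeta^k$ or $\bar\zeta^k$ (for $v_k$, $\bar v_k$). Then $v_0, v_k, \bar v_k$ are exactly the functionals $X = \frac1\pi\iint_{\sf D}\Psi\,\sigma\,d^2\zeta$ of Lemma \ref{harmonic}. That lemma, together with (\ref{nablaF}) obtained by applying it once to $F$, tells us that differentiating $F$ by $\nabla(z)$ produces harmonic extensions; iterating, the coefficients in the $z$-expansion of $\nabla(z)F$ are themselves of the form $\frac1\pi\iint_{\sf D}(\text{power of }\zeta)\,\sigma\,d^2\zeta$ up to the elementary constant from $\log$. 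Concretely: expand $\nabla(z)F = \p_0 F + \sum_{k\geq1} z^{-k}\p_k F + \sum_{k\geq1}\bar z^{-k}\bar\p_k F$ on one side, and on the other expand (\ref{nablaF}) using $-2\log|z^{-1}-\zeta^{-1}| = 2\log|z| + \sum_{k\geq1}\frac1k(\zeta^k z^{-k} + \bar\zeta^k\bar z^{-k}) - 2\log|z|\cdot[\text{correction}]$; the $2\log|z|$ pieces from $\log|z^{-1}|$ and from $\log|z|$ inside $\log|1-\zeta/z|$ combine so that the surviving $z$-independent part is $\frac1\pi\iint_{\sf D}\log|\zeta|^2\sigma\,d^2\zeta = v_0 = \p_0 F$, and the coefficient of $z^{-k}$ is $\frac{1}{\pi k}\iint_{\sf D}\zeta^k\sigma\,d^2\zeta$, giving $\p_k F = v_k$ after recalling $v_k = \frac1\pi\iint_{\sf D}z^k\sigma\,d^2 z$ (note the generating-function normalization $C^-(z) = -t_0/z - \sum v_k z^{-k-1}$ already fixes this). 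I expect the main obstacle to be precisely the careful handling of the $\log|z|$ terms — making sure that the logarithmic growth of $\nabla(z)F$ near infinity is bookkept consistently so that the truly constant term is correctly identified with $v_0$ — rather than anything conceptually deep; the rest is a routine power-series comparison.
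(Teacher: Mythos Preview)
Your approach is exactly the paper's: apply Lemma~\ref{harmonic} to (\ref{F}) to obtain (\ref{nablaF}), then expand both sides of (\ref{nablaF}) near $z=\infty$ and match coefficients. Two small points will make your write-up much cleaner. First, your worry about stray $\log|z|$ terms is a red herring: factor as $z^{-1}-\zeta^{-1}=-\zeta^{-1}(1-\zeta/z)$, so that for $|\zeta|<|z|$ one has
\[
-2\log\bigl|z^{-1}-\zeta^{-1}\bigr|=\log|\zeta|^2+\sum_{k\geq 1}\frac{1}{k}\bigl(\zeta^k z^{-k}+\bar\zeta^k\bar z^{-k}\bigr),
\]
with no $\log|z|$ surviving; the constant term gives $v_0$ directly. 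Second, watch the normalizations: $D(z)=\sum_{k\geq 1}\frac{z^{-k}}{k}\p_k$, so $\nabla(z)F=\p_0F+\sum_{k\geq 1}\frac{z^{-k}}{k}\p_kF+\cdots$, and $v_k=\frac{1}{\pi}\iint_{\sf D}\zeta^k\sigma\,d^2\zeta$ (no $1/k$); your two slips happen to cancel, but correct them both.
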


\begin{proof} Applying Lemma \ref{harmonic1} to (\ref{F}), we get
(\ref{nablaF}).
The expansion of both sides in powers of $z$, $\bar z$ yields
(\ref{vk}).
\end{proof}

\begin{proposition}\label{2F}
Suppose that only a finite number of the moments $t_k$ are different
from 0. Then the tau-function (\ref{F}) can be represented as
\beq\label{Fv}
2F=-\, \frac{1}{\pi}\iint_{\sf D} U\sigma \, d^2z +t_0v_0+
\sum_{k\geq 1}(t_k v_k+\bar t_k \bar v_k).
\eeq
\end{proposition}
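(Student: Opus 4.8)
The plan is to compute the right-hand side of (\ref{Fv}) directly by expressing each term through the kernel $\log|z^{-1}-\zeta^{-1}|$ and the density $\sigma$, and then to recognize the result as $-\frac{2}{\pi^2}\iint_{\sf D}\iint_{\sf D}\sigma\log|z^{-1}-\zeta^{-1}|\sigma\, d^2z\,d^2\zeta = 2F$. First I would use the identity $\log|z^{-1}-\zeta^{-1}|=\log|z-\zeta|-\log|z|-\log|\zeta|$, valid for $z,\zeta\neq 0$, to split the double integral defining $F$ into three pieces: one involving $\log|z-\zeta|$, one involving $\log|z|$ (times the full $\sigma$-mass, which is $\pi t_0$), and the symmetric one with $\log|\zeta|$. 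The last two together contribute $\frac{2t_0}{\pi}\iint_{\sf D}\log|z|\,\sigma\,d^2z = t_0 v_0$ by the definition of $v_0$. So it remains to match $-\frac{1}{\pi}\iint_{\sf D}U\sigma\,d^2z+\sum_{k\geq 1}(t_kv_k+\bar t_k\bar v_k)$ with $-\frac{2}{\pi^2}\iint\iint \sigma\log|z-\zeta|\,\sigma$.

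Next I would recall that $U$ satisfies $\partial\bar\partial U=\sigma$, so up to a harmonic function $U(z,\bar z)$ is the logarithmic potential of $\sigma$ over the \emph{whole} plane (or over ${\sf D}$ plus the contribution of ${\sf D^c}$). The key is to isolate the contribution of the domain: write $U=U_{\sf D}+U_{{\sf D^c}}$ where $U_{\sf D}(z)=\frac{2}{\pi}\iint_{\sf D}\log|z-\zeta|\,\sigma(\zeta,\bar\zeta)\,d^2\zeta$ (the potential of the part of the charge inside ${\sf D}$), so that $-\frac{1}{\pi}\iint_{\sf D}U_{\sf D}\,\sigma\,d^2z=-\frac{2}{\pi^2}\iint_{\sf D}\iint_{\sf D}\sigma\log|z-\zeta|\,\sigma$, which is exactly the $\log|z-\zeta|$ piece we need. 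The remaining task is to show that $-\frac{1}{\pi}\iint_{\sf D}U_{{\sf D^c}}\,\sigma\,d^2z$ equals $\sum_{k\geq 1}(t_kv_k+\bar t_k\bar v_k)$. For $z\in{\sf D}$ and $\zeta\in{\sf D^c}$ we have $|z|<|\zeta|$ near the origin, but more carefully one expands $U_{{\sf D^c}}$ using the fact that for $z$ inside and $\zeta$ outside, $\log|z-\zeta|^2=\log|\zeta|^2 - \sum_{k\geq1}\frac{1}{k}(z^k\zeta^{-k}+\bar z^k\bar\zeta^{-k})$ — but this is only valid where the series converges, so the cleaner route is to use that the part of $U$ coming from ${\sf D^c}$ restricted to ${\sf D}$ is harmonic in ${\sf D}$ and hence expandable in a Taylor series whose coefficients are precisely (up to normalization) the complimentary moments of ${\sf D^c}$, i.e. the ordinary moments $t_k$; pairing against $\sigma$ over ${\sf D}$ then produces $v_k$ by their integral definition. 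The hypothesis that only finitely many $t_k$ are nonzero guarantees these expansions terminate or at least converge and that all the integrals are finite, so no analytic subtlety arises.

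The main obstacle I anticipate is bookkeeping the constant and logarithmic ($k=0$) terms correctly: the term $t_0v_0$, the $\log|z|$ subtractions, and the normalization conventions ($v_0=\frac1\pi\iint_{\sf D}\log|z|^2\sigma$ versus a $\log|z|$ with a factor $2$) must all be tracked with care, and one must be sure the cross term $\iint_{\sf D}\iint_{\sf D^c}$ is accounted for exactly once rather than twice or with the wrong sign. A clean way to organize this is to apply Lemma \ref{harmonic} (or rather its consequence (\ref{nablaF})): from $\nabla(z)F=-\frac{2}{\pi}\iint_{\sf D}\log|z^{-1}-\zeta^{-1}|\,\sigma\,d^2\zeta$ and the relations (\ref{vk}), $v_0=\partial_0F$, $v_k=\partial_kF$, $\bar v_k=\bar\partial_kF$, one gets that $\bigl(t_0\partial_0+\sum_k(t_k\partial_k+\bar t_k\bar\partial_k)\bigr)F$ is the Euler-type operator applied to $F$; since $F$ is (quasi-)homogeneous of degree $2$ in the variables $t_0,\{t_k\},\{\bar t_k\}$ when $\sigma$ is held fixed — this homogeneity follows from rescaling ${\sf D}\mapsto\lambda{\sf D}$, which scales $t_0\mapsto\lambda^2 t_0$, $t_k\mapsto\lambda^{2-k}t_k$ up to the $\sigma$-rescaling — Euler's theorem gives $\sum\text{(degree)}\cdot(\text{variable})\cdot\partial F = 2F$ plus the anomalous piece coming from the explicit $\log|z|$ (the non-scale-invariant part of the kernel $z^{-1}-\zeta^{-1}$), and that anomalous piece is exactly $-\frac1\pi\iint_{\sf D}U\sigma\,d^2z$. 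I would present the argument in whichever of these two equivalent forms (direct kernel splitting, or Euler/homogeneity via (\ref{nablaF}) and (\ref{vk})) turns out to require the fewest explicit constants; the Euler-theorem route is likely shorter and less error-prone.
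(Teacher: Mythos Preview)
Your first route (kernel splitting) is essentially the paper's own argument, and it works. The paper packages it slightly differently: it sets $\phi(z,\bar z)=-\frac{2}{\pi}\iint_{\sf D}\log|z^{-1}-\zeta^{-1}|\,\sigma\,d^2\zeta$ (so that $2F=\frac{1}{\pi}\iint_{\sf D}\phi\,\sigma\,d^2z$) and asserts the expansion
\[
\phi(z,\bar z)=-U(z,\bar z)+t_0\log|z|^2+\sum_{k\geq 1}(t_kz^k+\bar t_k\bar z^k)
\quad\text{in }{\sf D}\setminus\{0\},
\]
after which termwise integration against $\sigma$ over ${\sf D}$ gives $-\frac{1}{\pi}\iint U\sigma$, $t_0v_0$, and $\sum(t_kv_k+\bar t_k\bar v_k)$. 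Your decomposition $U=U_{\sf D}+U_{{\sf D^c}}$ amounts to the same thing once you interpret $U_{{\sf D^c}}:=U-U_{\sf D}$ as a \emph{definition} rather than an integral over ${\sf D^c}$; the latter need not converge and $U$ is not assumed to be the global Newton potential of $\sigma$. With that reading, $U-U_{\sf D}$ is harmonic in ${\sf D}$, and the identification of its Taylor coefficients with the $t_k$ follows cleanly from the jump relation (\ref{jump}): on $\gamma$ one has $\partial U_{\sf D}=-C^{-}$, hence $\partial(U-U_{\sf D})=C^{+}=\sum_{k\geq 1}kt_kz^{k-1}$, which fixes the expansion (the finiteness hypothesis makes this a polynomial). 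This is the step you flagged as the main bookkeeping issue, and it is indeed where the constant term has to be tracked.

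Your second route, however, does not work in the generality of the proposition. The function $F$ is \emph{not} quasi-homogeneous of degree $2$ in $t_0,\{t_k\},\{\bar t_k\}$ for a general background $\sigma$: rescaling ${\sf D}\mapsto\lambda{\sf D}$ with $\sigma$ held fixed does not rescale the moments as $t_k\mapsto\lambda^{2-k}t_k$ unless $\sigma\equiv 1$. Quasi-homogeneity relations of the type you describe are special features of particular densities (compare Proposition~\ref{alpha} for $\sigma=|z|^{2(\alpha-1)}$ and Proposition~\ref{alpha=0} for the Hurwitz density), and the ``anomalous piece'' in those cases is computed precisely by first establishing (\ref{Fv}) via the kernel argument, not the other way round. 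So the Euler-theorem shortcut is circular here; stick with the first route.
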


\begin{proof}
Set $\phi (z, \bar z)= -\, \frac{2}{\pi}\iint_{\sf D}
\log \left |z^{-1}-\zeta^{-1}\right |\sigma (\zeta , \bar \zeta )\,
d^2\zeta$, then under the assumption of the proposition
we have the expansion
$$
\phi (z, \bar z)=-U(z, \bar z)+t_0 \log |z|^2 +
\sum_{k\geq 1} (t_kz^k + \bar t_k \bar z^k)
$$
valid everywhere in ${\sf D}\! \setminus \! \{0\}$.
Substituting this into (\ref{F}) written in the form
$2F=\frac{1}{\pi}\iint_{\sf D}\phi \, \sigma \, d^2z$,
performing the termwise integration and using the
definition of the complimentary moments, we get (\ref{Fv}).
\end{proof}

\section{Conformal dynamics in a symmetric background}

\subsection{The general axially symmetric case}
The case when the background
density function $\sigma$ (and the function $U$)
is {\it axially symmetric},
i.e., depends only on $|z|$, is of a special interest.
Note that for any axially symmetric background it holds
$z\p U=\bar z\bar \p U$.
We denote the symmetric density by $\sigma (|z|^2)$. In this case
the conformal dynamics provides a symmetric solution to the
dispersionless 2DTL, so the function $F$ has the
explicit Taylor expansion obtained in section 2.
Indeed, the symmetry
implies that when all moments $t_k$ at $k\geq 1$ are
equal to 0,
the domain ${\sf D}$ is just a disk of radius $R$ such that
\beq\label{t0}
t_0=\int_{0}^{R^2}\!\! \sigma (x)\, dx, \quad \quad
v_0=\int_{0}^{R^2}\!\! \log x \, \sigma (x)\, dx
\eeq
and the complimentary moments $v_k$ with $k\geq 1$ are zero.
From (\ref{vk}) we see that
\beq\label{sym1}
v_k \bigr |_{t_0}= \p_{t_k}F\bigr |_{t_0} =0 \quad \mbox{for all
$k\geq 1$}
\eeq
so the solution is indeed symmetric in the sense of section 2.
One can see that $\p v_0 /\p t_0 =\log R^2$, i.e., $p=1/R$ as it should be.
We also have
\beq\label{F0}
F\bigr |_{t_0}
=\int_{0}^{R^2}\!\! \sigma (y)dy \int_{0}^{y}\log x \, \sigma (x)\,
dx, \quad \quad
f(t_0)=e^{F|_{t_0}''}=R^2(t_0),
\eeq
where $R^2$ should be understood as a function of $t_0$
implicitly given by (\ref{t0}).

The Green function at $t_k=0$ is
$$
2\pi G(z, \zeta )=\log \left |\frac{R^2(z-\zeta )}{z\bar \zeta -R^2}\right |
=\log \left |z^{-1}-\zeta ^{-1}\right | +\log R - {\rm Re}
\sum_{k\geq 1}\frac{R^{2k}}{kz^k \bar \zeta ^k}
$$
Comparison with (\ref{GF}) yields
\beq\label{sym1a}
\p_{t_j}\p_{t_k}F\bigr |_{t_0}=0\,, \quad
\p_{t_j}\p_{\bar t_k}F\bigr |_{t_0}=k R^{2k}\delta_{jk}
\eeq
in accordance with Lemma \ref{symmetric}.

Note that in the axially symmetric case the moment $v_0$ can be
represented as a contour integral:
$$
v_0=\frac{1}{2\pi i}\oint_{\gamma} \left (
\log |z|^2 \, \p U - z^{-1}U\right )dz.
$$

\subsection{Examples of symmetric solutions}

\subsubsection{The homogeneous density} An important example
is the homogeneous density
\beq\label{h1}
\sigma (z, \bar z)=(z\bar z)^{\alpha -1},
\quad \quad
U(z, \bar z)=\frac{1}{\alpha^2} \, (z\bar z)^{\alpha}
\eeq
with some $\alpha \in \mathbb{R}$. Assume that $\alpha >0$. In this case
$t_0=R^{2\alpha}/\alpha$, so $f(t_0)=(\alpha t_0)^{1/\alpha}$ and
$$
F\bigr |_{t_0}= \frac{1}{4\alpha^3}\left (
f^{2\alpha}\log f^{2\alpha} - 3 f^{2\alpha}\right )=
\frac{t_0^2}{2\alpha}  \log (\alpha t_0 )- \frac{3t_0^2}{4\alpha}
$$
(recall that for symmetric density $f=R^2$).

\begin{proposition} \label{alpha}
Assuming that only a finite number of the moments $t_k$ are different
from 0, the dispersionless tau-function for this solution
is quasi-homogeneous,
that is it obeys the relation
\beq\label{quasi-hom}
4\alpha F= -t_0^2 +2\alpha t_0 \p_0 F +
\sum_{k\geq 1} (2\alpha -k)
\left ( t_k \p_k F +\bar t_k \bar \p_k F\right ).
\eeq
\end{proposition}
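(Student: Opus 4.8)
The plan is to establish the quasi-homogeneity relation (\ref{quasi-hom}) by combining the representation (\ref{Fv}) for $2F$ with the homogeneity properties of the specific functions $\sigma$ and $U$ in (\ref{h1}) under rescaling of the domain. First I would use Proposition \ref{2F}, which gives
$$
2F = -\frac{1}{\pi}\iint_{\sf D} U \sigma \, d^2z + t_0 v_0 +
\sum_{k\geq 1}(t_k v_k + \bar t_k \bar v_k),
$$
so the task reduces to controlling how each term scales. The key observation is that under the dilation $z \mapsto \lambda z$ (which maps ${\sf D}$ to a dilated domain $\lambda {\sf D}$), the density and potential in (\ref{h1}) satisfy $\sigma(\lambda z,\overline{\lambda z}) = |\lambda|^{2\alpha-2}\sigma(z,\bar z)$ and $U(\lambda z, \overline{\lambda z}) = |\lambda|^{2\alpha}U(z,\bar z)$; hence from the integral definitions of the moments, $t_k \mapsto \lambda^{-k}|\lambda|^{2\alpha} t_k$ (and the conjugate), while $t_0 \mapsto |\lambda|^{2\alpha} t_0$ and $v_0 \mapsto |\lambda|^{2\alpha}(v_0 + 2t_0 \log|\lambda|)$. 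One then checks that the double integral $\iint_{\sf D} U\sigma\, d^2z$ scales as $|\lambda|^{4\alpha}$ and that $F$ itself scales as $|\lambda|^{4\alpha}$ up to a correction linear in $\log|\lambda|$ coming from the $\log|z^{-1}-\zeta^{-1}|$ kernel in (\ref{F}).

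The cleanest route is to differentiate the scaling relation for $F$ with respect to $\log|\lambda|$ and set $\lambda = 1$. Writing $\mu = |\lambda|$ and using the chain rule on $F$ as a function of $t_0, \{t_k\}, \{\bar t_k\}$, the infinitesimal dilation acts as the vector field
$$
2\alpha t_0 \p_0 + \sum_{k\geq 1}(2\alpha - k)(t_k \p_k + \bar t_k \bar \p_k)
$$
(the $2\alpha$ from the overall $|\lambda|^{2\alpha}$ factor, the $-k$ from the $z^{-k}$ in $t_k$), acting on the scale-covariant quantity. On the other side, from (\ref{F}) the explicit scaling of $F$ under $z\mapsto \lambda z$ is $F \mapsto |\lambda|^{4\alpha} F - (\text{something})\log|\lambda|$, where the $\log|\lambda|$ term arises because $\log|(\lambda z)^{-1} - (\lambda\zeta)^{-1}| = \log|z^{-1}-\zeta^{-1}| - \log|\lambda|$; this contributes $\frac{1}{\pi^2}\log|\lambda|\bigl(\iint_{\sf D}\sigma\,d^2z\bigr)^2 = t_0^2\log|\lambda|$ to $2F$ (using $t_0 = \frac{1}{\pi}\iint_{\sf D}\sigma\,d^2z$). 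Differentiating at $\lambda = 1$ gives $\bigl(2\alpha t_0\p_0 + \sum_k(2\alpha-k)(t_k\p_k + \bar t_k\bar\p_k)\bigr)F = 4\alpha F + t_0^2$, which rearranges to (\ref{quasi-hom}).

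The main obstacle I anticipate is bookkeeping the $\log|\lambda|$ contribution correctly: one must be careful that the logarithmic moment $v_0$ and the $t_0 \log|z|^2$ pieces conspire so that the only surviving anomalous term is $-t_0^2$ with the right coefficient. A safe way to pin this down is to verify the identity directly on the restriction $t_k = 0$, where by (\ref{F0}) one has $F|_{t_0} = \frac{t_0^2}{2\alpha}\log(\alpha t_0) - \frac{3t_0^2}{4\alpha}$ and $\p_0 F|_{t_0} = v_0 = \frac{t_0}{\alpha}\log(\alpha t_0) - \frac{t_0}{\alpha}$; plugging in confirms $4\alpha F|_{t_0} = -t_0^2 + 2\alpha t_0\p_0 F|_{t_0}$, fixing the constant. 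The off-diagonal terms in $t_k, \bar t_k$ then follow from the scaling argument above, using that $v_k = \p_k F$ is homogeneous of the stated degree in the moments, which can be read off from the integral formula $v_k = \frac{1}{\pi}\iint_{\sf D} z^k \sigma\, d^2z$ together with the scaling of $\sigma$. An alternative, more pedestrian check of the cross terms is to apply the operator $\nabla(z)$ from (\ref{nabla}) to both sides and use Lemma \ref{harmonic}, but the dilation argument is more transparent and avoids term-by-term manipulation of the series.
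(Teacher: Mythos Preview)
Your scaling argument is correct and gives a genuinely different proof from the paper's. The paper does not use dilations at all: instead it observes that for the homogeneous potential $U=(z\bar z)^{\alpha}/\alpha^2$ one has the identity $U\sigma=\tfrac{1}{2}\bar\partial(U\partial U)$, applies Stokes to convert $\iint_{\sf D}U\sigma\,d^2z$ into the contour integral $\tfrac{1}{4\pi i\alpha}\oint_{\gamma}(z\partial U)^2\,dz/z$, then writes $\partial U=C^+-C^-$ on $\gamma$ and expands in moments to obtain $\tfrac{1}{2\pi i}\oint_{\gamma}(z\partial U)^2\,dz/z=t_0^2+2\sum_{k\geq 1}kt_kv_k$; plugging this into (\ref{Fv}) and using $v_k=\partial_kF$ gives (\ref{quasi-hom}).

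Your route is more conceptual: the relation drops out as the Euler identity for the one-parameter dilation $z\mapsto\lambda z$, with the $-t_0^2$ term appearing as the anomaly from $\log|z^{-1}-\zeta^{-1}|\mapsto\log|z^{-1}-\zeta^{-1}|-\log\lambda$ in the kernel of (\ref{F}). This makes the structure of the formula transparent and would generalize immediately to any density that is homogeneous of some degree. The paper's contour-integral route, on the other hand, yields the intermediate identity $\frac{1}{\pi}\iint_{\sf D}U\sigma\,d^2z=\frac{1}{2\alpha}\bigl(t_0^2+2\sum_{k}kt_kv_k\bigr)$ and the reality of $\sum_k kt_kv_k$, which are reused in the proof of Proposition~\ref{cut-and-join}; your argument bypasses these but does not produce them.

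One small bookkeeping slip: the term $\frac{1}{\pi^2}\log|\lambda|\bigl(\iint_{\sf D}\sigma\,d^2z\bigr)^2=t_0^2\log|\lambda|$ is the contribution to $F$, not to $2F$ (the double integral in (\ref{F}) already has the $1/\pi^2$). Your final differentiated identity $\bigl(2\alpha t_0\partial_0+\sum_k(2\alpha-k)(t_k\partial_k+\bar t_k\bar\partial_k)\bigr)F=4\alpha F+t_0^2$ is nonetheless correct, so this is only a wording issue. The sanity check at $t_k=0$ is a good idea but not logically required, since the scaling argument already fixes all constants.
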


\begin{proof}
We use equation (\ref{Fv}). In the integral term we write
$U\sigma = \bar \p (U\p U)-\p U \, \bar \p U$ and note that for the
particular function $U$ we have $U\sigma =\p U \, \bar \p U$, and also
$z\p U=\alpha U$, so
$U\sigma = \frac{1}{2}\, \bar \p (U\p U)$. This allows us to
transform the 2D integral to a contour integral:
$$
\frac{1}{\pi}\iint_{\sf D} U\, \sigma \, d^2 z =
\frac{1}{4\pi i\alpha}\oint_{\gamma} (z\p U)^2 \, \frac{dz}{z}\,.
$$
Now recall (\ref{jump}) and represent $\p U=C^+-C^-$ (on $\gamma$),
with $C^+$ being a polynomial.
Shrinking the integration contour to $\infty$, we obtain:
$$
\frac{1}{2\pi i}\oint_{\gamma} (z\p U)^2 \, \frac{dz}{z} =
t_0^2 +2\sum_{k\geq 1}kt_k v_k.
$$
Since the initial integral is obviously real, we conclude that
$\sum_{k\geq 1}kt_k v_k$ is a real quantity. The quasi-homogeneity
relation (\ref{quasi-hom}) follows.
\end{proof}

{\bf Remark.}
The case $\alpha =1$ ($\sigma (z, \bar z)=1$)
was considered in \cite{MWZ,WZ}. The Taylor
coefficients for this solution were found in \cite{N}.
In this case the conformal dynamics is identical to the Darcy law
for the motion of interface between viscous and non-viscous fluids
confined in the radial Hele-Shaw cell, assuming that there is no
surface tension at the interface. The normal
velocity of the interface $\gamma$ is proportional to the gradient
of the Green function of the Laplace operator.
This type of processes is known as
{\it Laplacian growth} (see, e.g., \cite{book,MWPT}).
As is mentioned in \cite{MWZ99}, the conformal dynamics
in the case $\alpha = 1/N$ for an integer $N>0$ can be mapped
to a Laplacian growth process in a sector with angle
$2\pi /N$ and periodic conditions at the boundary rays (a cone).
With proper modifications, the case $\alpha <0$ (and, in particular,
$\alpha =-1$) can be also
considered and mapped to the Laplacian growth in the compact
{\it interior} domain bounded by the curve $\gamma$.

\subsubsection{The solution yielding the Hurwitz numbers}
In this case
\beq\label{h2}
\sigma (z, \bar z)=\frac{1}{\beta \, z\bar z}\,,
\quad \quad
U(z, \bar z)=\frac{1}{2\beta} \Bigl [ \log \frac{z\bar z}{Q}\Bigr ]^2
\eeq
which can be formally regarded as a limit $\alpha \to 0$ of
(\ref{h1}) but the limit is tricky and this case deserves a separate
consideration. In this case
$$
F\bigr |_{t_0}= \frac{\beta t_0^3}{6}+ \frac{t_0^2}{2}\, \log Q,
\quad \quad
f(t_0)=Qe^{\beta t_0}.
$$

{\bf Remark.}
As is shown in \cite{Z}, the conformal dynamics
in this case can be mapped
to a Laplacian growth process in an infinite channel
with periodic conditions in the transverse direction,
i.e., on the surface of an infinite cylinder of radius $1/\beta$.

In this case the double integral (\ref{F}) diverges
because of the singularity at $0$.
The dispersionless tau-function in this case is given by
this integral with ${\sf D}$ changed to
${\sf D}\setminus {\sf B}(Q)$, where ${\sf B}(Q)$ is the disk
of radius $Q^{1/2}$ centered at the origin.

\begin{proposition} \label{alpha=0}(\cite{Z})
The dispersionless tau-function for this solution obeys
the following homogeneity relation:
$$
2F= \tau \p_{\tau}F +t_0\p_0 F +
\sum_{k\geq 1}
\left ( t_k \p_k F +\bar t_k \bar \p_k F\right ), \quad \quad
\tau \equiv 1/\beta \,.
$$
\end{proposition}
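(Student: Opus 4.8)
The plan is to obtain the relation from the scaling behaviour of the defining integral (\ref{F}), in the same spirit as the proof of Proposition \ref{alpha} but bypassing both (\ref{Fv}) and the finiteness hypothesis. First I would record that for this background the density is simply proportional to $\tau =1/\beta$: setting $\sigma_1(z,\bar z)=(z\bar z)^{-1}$, which carries no $\tau$, one has $\sigma =\tau \sigma_1$. Hence, if the domain ${\sf D}$ is held fixed and $\tau$ is rescaled, $\tau \mapsto \mu \tau$ with $\mu >0$, then $\sigma \mapsto \mu \sigma$. Because the moments
$$
t_0=\frac{1}{\pi}\iint_{\sf D}\sigma\, d^2z\,, \qquad t_k=-\frac{1}{\pi k}\iint_{{\sf D^c}}z^{-k}\sigma\, d^2z
$$
are linear in $\sigma$, they transform by $t_0\mapsto \mu t_0$, $t_k\mapsto \mu t_k$, $\bar t_k\mapsto \mu \bar t_k$; and because the integral (\ref{F}) is bilinear in $\sigma$, the tau-function transforms by $F\mapsto \mu^2 F$.

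Invoking Theorem \ref{local-coordinates} (the moments are local coordinates on the space of domains), this says precisely that, as a function of its arguments,
$$
F\bigl(\mu \tau;\ \mu t_0,\{\mu t_k\},\{\mu \bar t_k\}\bigr)=\mu^2\, F\bigl(\tau;\ t_0,\{t_k\},\{\bar t_k\}\bigr)
$$
for all $\mu$ near $1$ (with the regularization parameter $Q$ the same on both sides). Differentiating this identity in $\mu$ at $\mu =1$ --- Euler's relation for homogeneous functions --- then yields
$$
\tau \p_\tau F+t_0\p_0 F+\sum_{k\geq 1}\bigl(t_k\p_k F+\bar t_k\bar \p_k F\bigr)=2F\,,
$$
which is the assertion.

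The step I expect to be the main obstacle is justifying the scaling in the presence of the singularity of $\sigma_1=(z\bar z)^{-1}$ at the origin, which lies inside ${\sf D}$: both $t_0$ and the double integral (\ref{F}) diverge there and must be regularized, with $Q$ playing the role of the regularization scale --- this is why $f(t_0)=Qe^{\beta t_0}$ and $F|_{t_0}=\frac{\beta t_0^3}{6}+\frac{t_0^2}{2}\log Q$. One must check that a $\tau$-independent regularization --- say, excising $\{|z|^2<Q\}$ --- preserves the homogeneity: since $Q$ stays fixed while $\tau$ varies, the regularized $t_0$ is still linear and the regularized $F$ still bilinear in $\sigma =\tau \sigma_1$, so the argument above survives, and the explicit restriction $F|_{t_0}$ above furnishes a direct consistency check of the final relation. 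As an alternative route one could instead start from (\ref{Fv}): by (\ref{vk}) the claim reduces to $-\frac{1}{\pi}\iint_{\sf D}U\sigma\, d^2z=\tau \p_\tau F$, and the left-hand side can be converted to a contour integral of $(z\p U)^3$ by writing $\p U=C^+-C^-$ on $\gamma$ exactly as in the proof of Proposition \ref{alpha}, the regularization then entering as a residue on $|z|^2=Q$.
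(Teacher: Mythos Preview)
Your argument is correct and is essentially the same as the paper's: the paper observes that $\hat t_k:=\beta t_k$ are $\beta$-independent (for a fixed domain) and that $F=\beta^{-2}\hat F$ with $\hat F$ $\beta$-independent, then differentiates the identity $\beta^{2}F(\beta,\{\hat t_k/\beta\})=F(1,\{\hat t_k\})$ in $\beta$ --- which is exactly your Euler homogeneity argument rewritten in the variable $\beta=1/\tau$ instead of the auxiliary parameter $\mu$. The only cosmetic difference is that the paper sidesteps the origin singularity by writing the moments as contour integrals from the outset, whereas you flag the regularization explicitly; neither proof uses (\ref{Fv}) or any finiteness hypothesis.
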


\begin{proof}
We have $\displaystyle{
t_k=\frac{1}{2\pi i k\beta}\oint_{\gamma} z^{-k}
\log \Bigl ( \frac{|z|^2}{Q}\Bigr ) \frac{dz}{z}}
$, $k\geq 1$,
$\displaystyle{
t_0=\frac{1}{2\pi i \beta}\oint_{\gamma}
\log \Bigl ( \frac{|z|^2}{Q}\Bigr ) \frac{dz}{z}}
$, so that the variables $\hat t_k=\beta t_k$ are
$\beta$-independent. As is seen from equation (\ref{F}) with
$\sigma$ as in (\ref{h2}), $F$ is of the form $F=\beta^{-2}\hat F$,
where $\hat F$ is $\beta$-independent. Therefore,
$$
F(\beta , \{t_k\}) =F(\beta , \{\hat t_k/\beta \})=\beta^{-2}\hat F=
\beta^{-2}F(1, \{\hat t_k\}).
$$
Taking the total $\beta$-derivative of the identity
$\beta^2 F(\beta , \{\hat t_k/\beta \})=F(1, \{\hat t_k\})$, we get:
$$
-2\beta^3 F-\beta^4 \p_{\beta}F +\beta^2 \Bigl ( \hat t_0 \, \p_0 F
+2 \,{\rm Re} \! \sum_{k\geq 1} \hat t_k \, \p_k F \Bigr )=0
$$
or
$$
2F=-\beta \p_{\beta}F +t_0 \, \p_0 F+
\sum_{k\geq 1} \Bigl (t_k \, \p_k F  +\bar t_k \, \bar \p_k F \Bigr ).
$$
\end{proof}

\noindent
The Taylor expansion of this function is given by (\ref{FF}),
with the Taylor coefficients being essentially the double Hurwitz
numbers for connected genus 0 coverings. The homogeneity property
can be explicitly seen from the Taylor expansion.

\begin{proposition} \label{cut-and-join}
The dispersionless tau-function for the solution
determined by the data (\ref{h2}) satisfies the relations
\beq\label{h3}
\begin{array}{l}
\displaystyle{\frac{\p F}{\p \log Q}=
\frac{t_0^2}{2}+\sum_{k\geq 1}kt_k \, \p_kF},
\\ \\
\displaystyle{\frac{\p F}{\p \beta}=\frac{t_0^3}{6}+
t_0\sum_{k\geq 1}kt_k \, \p_kF +\frac{1}{2}
\sum_{k,l\geq 1} \bigl ( kl t_kt_l \, \p_{k+l}F+
(k+l)t_{k+l}\, \p_k F \, \p_l F\bigr )}.
\end{array}
\eeq
\end{proposition}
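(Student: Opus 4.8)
The plan is to derive the two identities in \eqref{h3} directly from the combinatorial structure of the generating function \eqref{FF}, using the cut-and-join description of the simple-ramification operator. The first relation is essentially a bookkeeping identity: differentiating $F$ in \eqref{FF} with respect to $\log Q$ brings down a factor $d=|\Delta|$ from $(Qe^{\beta t_0})^d$, and since $\sum_i \mu_i t_{\mu_i}\partial_{t_{\mu_i}}$ acting on $\prod_i(\beta\mu_i t_{\mu_i})$ counts $\sum_i \mu_i = d$, the operator $\sum_{k\geq 1}kt_k\partial_k$ reproduces multiplication by $d$ on every monomial. Accounting for the explicit $\frac{\beta t_0^3}{6}+\frac{t_0^2}{2}\log Q$ term (whose $\log Q$-derivative is $t_0^2/2$) gives the first line. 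I would write this out with $F(\beta,Q,t_0,\mathbf{t},\bar{\mathbf{t}})$ as in \eqref{FF} and compare coefficients of $t_\Delta\bar t_{\bar\Delta}$.

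For the second relation, the key input is that $\partial F/\partial\beta$ controls the change in the number $l$ of simple critical values, and inserting one extra simple critical point is the cut-and-join operation on the ramification profile over, say, $0$. Concretely, from \eqref{topol}-\eqref{LG203} (or directly from \eqref{FF}) one sees $\partial_\beta$ acting on $(Qe^{\beta t_0})^d\beta^{\ell(\Delta)+\ell(\bar\Delta)-2}$ produces a $t_0 d$ term plus a shift $l\to l+1$ with the combinatorial weight of adding one transposition. The transposition either merges two parts $k,l$ of $\Delta$ into $k+l$ (the term $\sum kl\, t_kt_l\,\partial_{k+l}F$, where the weights $kt_k$ are arranged so the merge is automatic) or splits one part $k+l$ into $k,l$ (the term $\frac12\sum(k+l)t_{k+l}\partial_kF\partial_lF$, the factor $\frac12$ for the unordered pair, and two $\partial F$ factors because splitting can disconnect the covering). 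This is the standard cut-and-join identity for double Hurwitz numbers in its dispersionless (genus 0) form; Okounkov's result \cite{Okounkov00} and its dispersionless limit \cite{Takasaki12} already give that $\exp\Phi$ is a tau-function annihilated by the relevant operator, and passing to genus 0 via the $\hbar$-rescaling of \eqref{topol} turns the quadratic cut-and-join operator into the above nonlinear first-order relation for $F$.

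The cleanest route, which I would actually take in the write-up, is to avoid re-deriving cut-and-join from scratch and instead quote the dispersionless string/Virasoro-type constraints of Takasaki \cite{Takasaki12} that characterize this solution, extract from them the two flows $\partial_{\log Q}$ and $\partial_\beta$, and then check that the explicit $t_0$-dependence $f(t_0)=Qe^{\beta t_0}$ together with \eqref{FF} reproduces exactly the stated right-hand sides. Alternatively, since $F$ here is the dispersionless tau-function of the conformal dynamics with $\sigma=1/(\beta z\bar z)$, one can differentiate the integral representation \eqref{F} with respect to the parameters $\beta$ and $Q$ that enter $\sigma$ and $U$ through \eqref{h2}, use Lemma \ref{harmonic} and the complimentary-moment formulas \eqref{vk}, and integrate by parts — this mirrors the proof of Proposition \ref{alpha=0}.

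The main obstacle is the second, nonlinear, term in $\partial_\beta F$: getting the combinatorial factors $\frac12$ and the product $\partial_kF\,\partial_lF$ exactly right requires careful tracking of how the $\frac{1}{l!}$ in \eqref{gen}, the automorphism factors $\sigma(\Delta)$, and the $\prod \mu_i t_{\mu_i}$ normalization interact when one transposition is added and the covering possibly disconnects into two genus 0 pieces whose profiles over the endpoints recombine. I expect this to be the step that needs the most care; the $\log Q$ relation and the linear part of the $\beta$ relation are routine degree-counting once the generating function is written explicitly.
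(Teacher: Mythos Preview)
Your treatment of the first relation matches the paper's: it simply notes that the identity is ``obvious from the structure of the Taylor expansion \eqref{FF}'', i.e., exactly the degree-counting you describe.

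For the second relation your primary route is genuinely different from the paper's. You propose to derive it combinatorially, via the dispersionless limit of the cut-and-join recursion for double Hurwitz numbers (leaning on \cite{Okounkov00,Takasaki12}); this is a valid and standard argument, and you correctly flag the delicate point, namely getting the factor $\tfrac12$ and the bilinear $\p_kF\,\p_lF$ from the disconnection of the covering. The paper, by contrast, stays entirely inside the conformal-dynamics framework of Sections~4--5: for the data \eqref{h2} one has the algebraic identity $U\sigma=\tfrac{1}{3}\,\bar\p(U\p U)$, which converts $\frac{1}{\pi}\iint_{\sf D}U\sigma\,d^2z$ into the contour integral $\frac{\beta}{12\pi i}\oint_\gamma (z\p U)^3\,\frac{dz}{z}$; expanding $(z\p U)^3$ via the jump formula $\p U=C^+-C^-$ (as in the proof of Proposition~\ref{alpha}) produces exactly the cubic expression $\frac{t_0^3}{6}+t_0\sum kt_kv_k+\frac12\sum(klt_kt_lv_{k+l}+(k+l)t_{k+l}v_kv_l)$. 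This is then combined with Proposition~\ref{2F} (the formula for $2F$) and the homogeneity of Proposition~\ref{alpha=0} to isolate $\p_\beta F$. Your ``alternative'' paragraph gestures at this analytic route but misses the key steps: the paper does \emph{not} differentiate the integral \eqref{F} in $\beta$ or $Q$ and invoke Lemma~\ref{harmonic}; the mechanism is the cubic identity for $U\sigma$ and the residue computation of $(z\p U)^3$. What your approach buys is a direct link to the enumerative meaning (the remark after the proposition is then immediate); what the paper's approach buys is a self-contained proof that uses only the machinery already built in Section~5, with no appeal to the Hurwitz recursion.
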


\begin{proof}
The first formula is obvious from the structure of the
Taylor expansion (\ref{FF}). Another
proof, similar to that of Proposition \ref{alpha}, is given
in \cite{Z}. For the proof of the second formula we note that
in the case (\ref{h2}) $U \sigma = \frac{1}{3}\, \bar \p (U\p U)$
and thus
$$
\frac{1}{\pi}\iint_{{\sf D}\setminus {\sf B}(Q)}U\sigma \, d^2z =
\frac{\beta}{12\pi i}\oint_{\gamma}(z\p U)^3 \, \frac{dz}{z}\,.
$$
Proceeding as in the proof of Proposition \ref{alpha}, we obtain
$$
\frac{1}{12\pi i}\oint_{\gamma}(z\p U)^3 \, \frac{dz}{z}=
\frac{t_0^3}{6}+
t_0 \! \sum_{k\geq 1}kt_k \, v_k +\frac{1}{2}
\sum_{k,l\geq 1} \bigl ( kl t_kt_l  v_{k+l}+
(k+l)t_{k+l} v_k  v_l\bigr ).
$$
Then from Proposition \ref{2F} we have:
\beq\label{vn6b}
\begin{array}{ll}
2F &= \displaystyle{
t_0v_0 \! +\! \sum_{k\geq 1}(t_kv_k +\bar t_k \bar v_k)
-\frac{\beta t_0^3}{6}-\beta t_0 \! \sum_{k\geq 1} kt_kv_k}
\\ &\\
&\, -\,\,\, \displaystyle{\frac{\beta}{2}\sum_{k,l\geq 1}
\Bigl (kl t_k t_l v_{k+l}+ (k+l)t_{k\! +\! l}v_kv_l\Bigr ).}
\end{array}
\eeq
Combining this with the homogeneity
property (Proposition \ref{alpha=0}), we arrive at
the second formula in (\ref{h3}).
\end{proof}

\noindent
{\bf Remark.}
The double sum in the second equation  in (\ref{h3}) is the
genus 0 part of the celebrated cut-and-join operator \cite{GJ}, see also \cite{Takasaki12}.

\vspace{2ex}

\section*{Acknowledgments}
\addcontentsline{toc}{section}{Acknowledgements}

The work of S.Natanzon was supported in part
by RFBR grant 13-01-00755, by grant NSh-4850.2012.1 for support of
leading scientific schools, by Russian Government grant no.2010-220-01-077 (ag.no.11.G34.310005).
The work of A.Zabrodin was supported in part
by RFBR grant 11-02-01220, by joint RFBR grants 12-02-91052-CNRS,
12-02-92108-JSPS, by grant NSh-3349.2012.2 for support of
leading scientific schools and
by Ministry of Science and Education of Russian Federation
under contract 8498.
This study was carried out within ``The National Research University Higher School of Economics, Academic Fund Program in 2013-2014'', research grant No. 12-01-0122.

\end{document}